\DeclareMathOperator{\im}{{\sf im}}
\DeclareMathOperator{\End}{{\sf End}}
\DeclareMathOperator{\lt}{\sf L}
\DeclareMathOperator{\id}{{\sf id}}
\DeclareMathOperator{\Q}{{\sf Q}}
\begin{document}

\newcommand{\Sp}{{\rm Span}} 
\newtheorem{thm}{Theorem}   
\newtheorem{lemma}[thm]{Lemma}   
\newtheorem{proposition}[thm]{Proposition}   
\newtheorem{theorem}[thm]{Theorem}   
\newtheorem{corollary}[thm]{Corollary}   
\newtheorem{pro}[thm]{Proposition}
\newtheorem{cor}[thm]{Corollary}
\newtheorem{lem}[thm]{Lemma}
\newtheorem{claim}[thm]{Claim}
\newtheorem{fact}[thm]{Fact}
\newtheorem{definition}[thm]{Definition}
\newtheorem{observation}[thm]{Observation}
\newcommand{\red}{\textcolor[rgb]{1.00,0.00,0.00}}

\newcommand{\lab}[1]{\label{#1}}%\marginpar{\footnotesize #1}} 
\renewcommand{\labelenumi}{{\textrm{(\roman{enumi})}}}
\newcommand{\vep}{\varepsilon} 
\newcommand{\One}{1} 
\newcommand{\Zero}{0} 
  \newcommand{\mb}[1]{\mathbb{#1}}
  \newcommand{\mc}[1]{\mathcal{#1}}
  \renewcommand{\phi}{\varphi}
\newcommand{\Qm}{\Q \mc{M}_f}
\newcommand{\et}{\wedge}
\newcommand{\imp}{\rightarrow  }
 \newcommand{\ex}{\exists  }
 \newcommand{\all}{\forall  }
\newcommand{\Mod}{{\sf Mod } }
\newcommand{\al  }{\alpha }
\newcommand{\be  }{\beta }
\newcommand{\ga  }{\gamma }
\newcommand{\de  }{\delta }
\newcommand{\Si}{\Sigma  }
\newcommand{\no}{\noindent}
\newcommand{\sub}{\subseteq}
\renewcommand{\th}{\theta}
\newcommand{\eqq}{\mb{E}}
\newcommand{\xc}[1]{}
\newcommand{\la}{{\sf L}}
\newcommand{\vb}{{{\sf L}(H)}}

\newcommand{\rh}{\mc{R}}
\newcommand{\lh}{\mc{L}}

\newcommand{\fs}{{${\rm FEAS}_{\mathbb{Z},\mathbb{R}}$}}
\newcommand{\hr}{\mc{H}_{\mathbb{R}}}
\newcommand{\vv}{ {\sf L}(V_F)} 
\newcommand{\bp}{{\rm BP}$\mc{NP}^0_\mathbb{R}$}

 \newcommand{\La}{{\sf L}}\newcommand{\mr}{\mc{R}}

\newcommand{\qr}{$\ast$-regular ring }
\newcommand{\ra}{\rightarrow}

%\newcommand{\ca}{\color{black}}

%%%%%%%%%%%%%%%%%%%%%%%%%%%%%%%%%%%%%%%%%%%%%%%%%%%%%%%%%%%%%%%%%%%%%%%%%%%%%% 
\title[Equational decision problems]{On the complexity
of  equational decision problems for finite height
(ortho)complemented modular lattices}

\author[C. Herrmann]{Christian Herrmann}
\address{Technische Universit\"{a}t Darmstadt FB4\\Schlo{\ss}gartenstr. 7, 64289 Darmstadt, Germany}
\email{herrmann@mathematik.tu-darmstadt.de}

\begin{abstract}
We study the computational complexity of  the 
satisfiability problem and the complement of the equivalence problem for
 complemented (orthocomplemented)
modular lattices $L$ and classes thereof.
Concerning  a  simple  $L$ of finite height,
 $\mc{NP}$-hardness 
is shown for both  problems.
Moreover, both problems are   shown to be 
polynomial-time equivalent to the same  feasibility problem over the
division ring $D$ whenever
$L$ is  the  subspace lattice of a $D$-vector
space  of finite dimension at least $3$.

Considering the class of all finite dimensional Hilbert spaces,
the equivalence problem for the class of subspace ortholattices
is shown to be polynomial-time equivalent 
to that for   the class of endomorphism $*$-rings 
with pseudo-inversion;
moreover, we  derive   completeness for the
complement of the
Boolean part of the  nondeterministic Blum-Shub-Smale model of real 
computation 
without constants. 
This result extends to the
additive category of finite dimensional Hilbert spaces,
enriched by adjunction and pseudo-inversion.
\end{abstract}

\dedicatory{
Dedicated to the honor of Ralph Freese, Bill Lampe, and J.B.Nation}

\subjclass{06C20,  16B50,  68Q17,   81P10}
\keywords{Ortholattice of subspaces, matrix $*$-ring,
 complemented modular lattice, 
equational theory,  satisfiability problems, complexity,
Hilbert category}

\maketitle

\section{Introduction}
Given a class $\mc{A}$ of algebraic structures,
the \emph{equational theory}
${\sf Eq}(\mc{A})$ of $\mc{A}$
consists of all identities valid in all members of
$\mc{A}$, and so in the variety ${\sf V}(\mc{A})$ 
generated by $\mc{A}$. The associated \emph{decision problem}
asks, for any given identity, whether or not it is in ${\sf Eq}(\mc{A})$.
This problem is also known as the \emph{equivalence problem} for
$\mc{A}$.
The  \emph{triviality problem}
for $\mc{A}$ is to decide 
for each given  finite presentation
whether the associated freely generated 
 member of  ${\sf V}(\mc{A})$
is trivial  or not. For a survey of decision problems in
Universal Algebra see \cite{stan0}.

Generalizing the well known  Boolean case, in \cite{jacm}
the following decision problems 
have been considered. \emph{Refutability}
(corresponding to ``weak satisfiability'' in \cite{jacm}) {\rm REF}$_\mc{A}$: 
Given  terms $t(\bar x),s(\bar x)$
is there $A \in \mc{A}$ and $\bar a$ in $A$
such that $A\models t(\bar a) \neq s(\bar a)$.
 \emph{Satisfiability} (corresponding to ``strong satisfiability'' in \cite{jacm})  SAT$_\mc{A}$:
Given  terms $t_i(\bar x),s_i(\bar x)$, $i=1,\ldots, n$,
is there $A \in \mc{A}$ and  $\bar a$ in $A$
such that
 $A\models  t_i(\bar a)=s_i(\bar a)$ for all $i$
and such that  the entries of $\bar a$ generate a non-trivial subalgebra
(this is to exclude trivial assignments when dealing with  lattices).
These decision problems are   polynomial-time
(shortly ``p-time'') equivalent to the  complement
of the decision problem for the equational  theory and
the triviality problem for $\mc{A}$, respectively.
We write SAT$_A$ and REF$_A$ if $\mc{A}=\{A\}$.

 According to Proposition 1.16 and
 the proof of Theorem 2.11 in
\cite{jacm},
SAT$_L$ and REF$_L$ 
 are p-time equivalent and $\mc{NP}$-hard 
  whenever $L$ is a simple
modular ortholattice of finite height.  
We are going to derive in Section~\ref{nph} $\mc{NP}$-hardness
of SAT$_L$ and REF$_L$ for
simple complemented modular lattices $L$ (bounds $0,1$
in the signature or not)
 of finite height.
The key is the case of the $2$-element lattice $\textbf{2}$.
$\mc{NP}$-completeness of REF$_L$  for  $L=\textbf{2}$
and even any finite lattice $L$
is   Theorem 3.3 in Bloniarz, Hunt, and Rosenkrantz \cite{blon0}.
For $L=\textbf{2}$  we rely
on a simple proof due to Ross Willard.

For the subspace lattice $\lt(V_F)$ of an $F$-vector space,
 we show in Section~\ref{fsl} that
both SAT$_{\lt(V_F)}$ and
REF$_{\lt(V_F)}$ are  p-time equivalent to FEAS$_{\mathbb{Z},F}$
provided that $3 \leq \dim V_F < \infty$.

Here, for a ring $R$, the problem FEAS$_{\mathbb{Z},R}$
is
to decide, for any finite list 
of terms $p_i(\bar x)$,
 in the signature of rings with $0,1$, $+,-,\cdot$, whether there is a common zero within $R$. 
In case of commutative $R$, the $p_i(\bar x)$
can be  replaced by
 multivariate polynomials $p_i$ in commuting variables and
 with integer coefficients.

Focus of \cite{jacm} was on the class $\mc{H}$
of all finite
dimensional real and complex  Hilbert spaces 
(that is, Euclidean and unitary spaces) 
and the class $\lh$
of subspace ortholattices 
${\sf L}(H)$, $H \in \mc{H}$;
that is, ${\sf L}(H)$
is the lattice of all linear subspaces of $H$,
with constants $0$ and  $H$, 
and orthocomplementation $U \mapsto U^\perp$,
the orthogonal of $U$.
Here, for any fixed  $H\in \mc{H}$,
${\rm REF}_{{\sf L}(H)}$ and 
${\rm SAT}_{{\sf L}(H)}$ are both  decidable due to
Tarski's decision procedure for $\mathbb{R}$;
for $\dim H\geq 3$,
 the complexity  has been determined, in \cite[Theorem 2.7]{jacm},
within the Blum-Shub-Smale model of non-deterministic
computation over the reals: both problems are
complete for 
  BP$\mc{NP}^0_\mathbb{R}$, 
the part of the model  which allows only integer 
 constants and  binary instances. 
This class  contains $\mc{NP}$ and 
  is, within this model, polynomial-time equivalent
to the problem FEAS$_{\mathbb{Z},\mathbb{R}}$, see \cite{jacm} for references.

The decision problem for  ${\sf Eq}(\lh)$ was  shown solvable in \cite{hard2,neu}, REF$_{\lh} \in  {\rm BP}\mc{NP}^0_\mathbb{R}$ 
in  \cite[Theorem 4.4]{jacm}. 
On the other hand, SAT$_{\lh}$ was shown   undecidable in \cite{SAT},
as well as
SAT$_\mc{C}$  for any class $\mc{C}$ 
of (expansions of)  modular lattices containing some 
subspace lattice ${\sf L}(V_F)$ of a vector space $V_F$
where $\dim V_F$ is infinite
or contains all  ${\sf L}(V_F)$ where $F$ is of characteristic $0$
and $\dim(V_F)$ finite.

The main objective of the present note 
was to  answer  \cite[Question 4.5]{jacm}, 
namely to  show (in Section~\ref{hilb}) that
REF$_{\lh}$ is p-time equivalent to FEAS$_{\mathbb{Z},\mathbb{R}}$. 
In Section~\ref{ring}, FEAS$_{\mathbb{Z},\mathbb{R}}$
 is also shown p-time equivalent
to REF$_\mc{R}$, where $\mc{R}$ is
the class  of all    $*$-rings
(with pseudo-inversion) $\End(H)$ of endomorphisms 
of $H$, $H \in \mc{H}$. This result extends to the
additive category of finite dimensional Hilbert spaces,
enriched by adjunction and pseudo-inversion.

As general references for lattice theory  we refer to
\cite{berb,berb2,cd,mae,sko}, for decision problems
 to \cite{stan}
Non-determinism in real computation will not enter into our discussion, explicitly.
That is, decision problems are understood in the sense of Logic
and related by p-time  reductions,
given by translations of formulas to be carried out
by a Turing machine in polynomial-time.
 In particular, this
applies when considering {\fs}.

Thanks are due to   referees and editor for
detailed corrections and  valuable suggestions,
in particular the hint to \cite{blon0}.

Best wishes go to Bill, J.B., and Ralph; also sincere thanks 
for hospitality, inspiring discussions, and 
excellent refereeing.

\section{Preliminaries}

\subsection{Basic equations and unnested terms}\lab{u2}
Translation from one equational language into another means, in essence,
to translate basic equations. Also, reducing a problem
first to one formulated via basic equations,
avoids possible exponential blowup in translations.
This technique is well known, compare \cite{stan}.

A \emph{basic equation} is of the form $y=x$ or
 $y=f(\bar x)$
where $f$ is a fundamental operation symbol and $\bar x$ 
a string of variables, the length of which is the arity of $f$.
A \emph{simple formula} $\phi(\bar x)$ in (a string of free) variables
$\bar x$ is of the form $\exists \bar u\,\phi_0(\bar x, \bar u)$
where $\phi_0(\bar x,\bar u)$ is a conjunction of
basic equations in variables $\bar x,\bar u$.

Given a  first order language,
the set $\Omega(\bar x)$ of \emph{circuits} in
\emph{input} variables $\bar x$
and \emph{output} variables $\bar y$ (here, $\bar x$ and $\bar y$
are lists without repetition)
consists of pairs $(\phi,\bar y)$, where $\phi$ is a formula, according to the following inductive definition:
\begin{itemize}
\item $(\emptyset,\bar x) \in\Omega(\bar x)$  for the empty conjunction $\emptyset$. 
\item Assume that 
$(\phi,\bar y) \in    \Omega(\bar x)$
and that $\psi$ is an equation 
$y=f(y_1,\ldots,y_m)$  where
 $f$ is an $m$-ary fundamental operation,
 each $y_i$ occurs in $\bar y$, and
$y$ is a new variable; 
then $(\phi',\bar y') \in \Omega(\bar x)$
where $\phi'$ is the conjunction of $\phi$ and $\psi$
and where $\bar y'$ is obtained from $\bar y$ 
by possibly omitting some of the $y_i$ occurring in $\psi$ and
by adding $y$. 
\end{itemize}
This is just a variant of the concept of algebraic circuit.
Let the set $\Theta(\bar x)$
of \emph{unnested terms} 
consist of the $(\emptyset,x_i)$ and
the $(\phi,y)\in \Omega(\bar x)$, that is, those with 
singleton  output variable $y$. 
For $T=(\phi_T,y_T) \in \Theta(\bar x)$,
that is with input variables $\bar x$ and output variable
$y_T$, 
let ${\bar u}_T$ denote the variables occurring in $\phi_T$
which are distinct from $y_T$ and the $x_i$.
Then $\phi_T$ is of the form $\phi(\bar x,y_T,{\bar u}_T)$ and for any 
algebraic structure $A$ (of the relevant similarity type)
 and assignment $\bar x \mapsto \bar a$ in $A$
there is  a unique $b \in A$ such that $A\models \phi_T(\bar a,b,\bar c)$
for some (also unique) $\bar c$ in $A$. Accordingly, we 
write $T=T(\bar x)$ and 
$T(\bar a)=b$ and use
$T(\bar x)=y$ as an abbreviation for the simple formula
$\exists  y_T\exists {\bar u}_T.\,\phi_T(\bar x,y_T,{\bar u }_T)
\wedge y_T=y$. 
The length of  terms $t(\bar x)$  and \emph{unnested terms}
$T(\bar x)$ 
will be denoted by $|t(\bar x) |$ and $|T(\bar x)|$. 
Let $o(t(\bar x))$ and $o(T(\bar x))$ the number
of occurrences of variables from $\bar x$ in $t(\bar x)$ and $T(\bar x)$,
respectively. Observe that all variables listed in $\bar x$
are  considered to  occur  in $t(\bar x)$.

\begin{fact}\lab{ured}
There is a map $\theta$, associating in p-time
with a  term $t(\bar x)$  an unnested term
$T(\bar x)$, of length linear
in that of $t(\bar x)$, 
such that $t(\bar x)=y$ is equivalent to $T(\bar x)=y$
and such that $o(T(\bar x))=o(t(\bar x))$.
For any unnested term $T(\bar x)$ 
there is a term $t(x)$ such that $\theta(t(\bar x))=T(\bar x)$. 
\end{fact}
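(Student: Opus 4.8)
The plan is to define $\theta$ by structural recursion on $t(\bar x)$, realizing the familiar flattening of an expression tree into straight-line form. On a variable set $\theta(x_i)=(\emptyset,x_i)$. On $t=f(t_1,\ldots,t_m)$ first recurse to obtain $T_i=\theta(t_i)$ with output variables $y_{T_i}$, chosen so that the auxiliary variables of the several $T_i$ are pairwise disjoint and disjoint from $\bar x$; then take a fresh variable $y$ and append the single basic equation $y=f(y_{T_1},\ldots,y_{T_m})$, putting $\theta(t)=(\phi_{T_1}\wedge\cdots\wedge\phi_{T_m}\wedge\,y=f(y_{T_1},\ldots,y_{T_m}),\,y)$. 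That $\theta(t)$ lies in $\Theta(\bar x)$ follows by induction from the two clauses defining $\Omega(\bar x)$: each argument $y_{T_i}$ is, by construction, already among the output variables available after building the $T_i$ (kept there rather than omitted whenever it is still needed), so the circuit-formation step applies.

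Three verifications then finish the forward direction, each by induction on the build-up of $t$. For \emph{correctness}, the unique-witness property recorded in the text (for each assignment $\bar x\mapsto\bar a$ there is a unique $b$ with $A\models\phi_T(\bar a,b,\bar c)$ for some $\bar c$) gives $T(\bar a)=t(\bar a)$ in every $A$, whence $t(\bar x)=y$ and $T(\bar x)=y$ are logically equivalent. For \emph{linearity and p-time}, each occurrence of a fundamental operation symbol in $t$ contributes exactly one basic equation to $\phi_T$ and each variable occurrence is copied once into an argument slot, so $|T(\bar x)|$ is bounded by a constant multiple of $|t(\bar x)|$, and the recursion visits each node of the parse tree once. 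For the \emph{occurrence count}, a variable $x_i$ of $\bar x$ is never abbreviated by an auxiliary variable but passed directly into the argument of the equation at its parent node, so the occurrences of $\bar x$-variables in $\phi_T$ are in bijection with those in $t$, giving $o(T(\bar x))=o(t(\bar x))$.

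For the converse I would read an unnested term back into a nested one. Listing the basic equations of $\phi_T$ in the order in which the circuit was built, each introduces a new variable defined from strictly earlier variables together with the inputs $\bar x$; substituting every such definition into the later equations, and finally into $y_T$, eliminates all auxiliary variables and yields a nested term $t(\bar x)$ defining the same function. One then checks, by induction on the number of equations, that $\theta(t(\bar x))=T(\bar x)$.

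I expect two points to be the crux. First, a naming discipline: to obtain the syntactic identity $\theta(t(\bar x))=T(\bar x)$ on the nose, rather than merely up to renaming, one must fix canonical names for the fresh output variables in the recursive clause of $\theta$ and reuse exactly these in the readback. Second, and more essential, is sharing. The map $\theta$ introduces no sharing beyond that of the inputs $\bar x$ --- two distinct occurrences of a non-input intermediate are computed by two separate basic equations --- so readback and $\theta$ are mutually inverse precisely on the tree-shaped unnested terms, that is, the image of $\theta$. A general $T\in\Theta(\bar x)$ may instead reuse an intermediate output (legitimate fan-out in the circuit), and plain flattening cannot reproduce such sharing; the converse is therefore read for the unnested terms arising as $\theta(t)$, which is all that the intended translation between the two equational languages requires. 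Granting these conventions, both directions reduce to a routine structural induction, with the unique-witness property supplying correctness and the rule of passing $\bar x$-variables directly supplying the occurrence-count identity.
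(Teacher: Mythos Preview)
The paper states this as a Fact without proof, treating the flattening of terms into straight-line (unnested) form as standard. Your structural recursion is the natural construction, and the three verifications you outline for the forward direction --- semantic equivalence via the unique-witness property, linear length and p-time computability, and preservation of $o(\cdot)$ by passing input variables through unchanged --- are correct and complete.

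Your caveat about the converse is a genuine point that the paper glosses over. The inductive definition of $\Omega(\bar x)$ does permit an intermediate output to be retained and reused, so an unnested term such as $(u=f(x_1)\wedge y=g(u,u),\,y)$ lies in $\Theta(\bar x)$ but is not $\theta(t)$ for any $t$ under the sharing-free recursion. The literal surjectivity claim is thus too strong; what actually holds, and what suffices for every use the paper makes of this Fact (the p-time reduction of {\rm REF} to {\rm uREF} and the translation Facts in Subsection~\ref{trans}), is that every $T\in\Theta(\bar x)$ is logically equivalent to $\theta(t)$ for the readback term $t$. Your reading of the converse as pertaining to the tree-shaped image of $\theta$ is the right repair, and your remark about fixing a canonical naming scheme for fresh variables is exactly what is needed to get syntactic equality on the nose there.
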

Thus,
REF$_{\mc{A}}$  reduces in p-time to its
analogue
 uREF$_{\mc{A}}$
for unnested terms: to decide for any $T(\bar  x)$ and
$S(\bar x)$ whether $T(\bar a)\neq S(\bar a)$
for some $A\in \mc{A}$ and $\bar a$ in $A$.

The decision problem sSAT$_{\mc{A}}$ 
is to decide for any  conjunction
$\phi_0(\bar x, \bar u)$ of basic equations 
whether there are $A\in \mc{A}$ and $\bar a$ in $A$
such that the members of $\bar a$ generate a non-trivial subalgebra of $A$
and 
such that $A \models  \exists \bar u.\,\phi_0(\bar a,\bar u)$.

\begin{fact}\lab{bsat}
 The decision problem
{\rm SAT}$_{\mc{A}}$  reduces in p-time  to {\rm sSAT}$_{\mc{A}}$.
In presence of constants $0\neq 1$ 
the problems are p-time equivalent.  
\end{fact}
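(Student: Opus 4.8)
The plan is to handle the two reductions separately, using Fact~\ref{ured} for the first and the constants for the second.

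For the p-time reduction of SAT$_{\mc{A}}$ to sSAT$_{\mc{A}}$, I would apply Fact~\ref{ured} to replace, for each $i$, the terms $t_i(\bar x)$ and $s_i(\bar x)$ by unnested terms $T_i(\bar x)=(\phi_{T_i},y_{T_i})$ and $S_i(\bar x)=(\phi_{S_i},y_{S_i})$, using disjoint copies of the auxiliary and output variables for distinct $i$. Since $t_i(\bar x)=y$ is equivalent to $T_i(\bar x)=y$, the equation $t_i(\bar a)=s_i(\bar a)$ holds in $A$ exactly when $A\models\exists y_{T_i}\exists y_{S_i}\exists\bar u_{T_i}\exists\bar u_{S_i}.\,\phi_{T_i}(\bar a,y_{T_i},\bar u_{T_i})\wedge\phi_{S_i}(\bar a,y_{S_i},\bar u_{S_i})\wedge y_{T_i}=y_{S_i}$. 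Conjoining these statements over all $i$ produces a single conjunction $\phi_0(\bar x,\bar u)$ of basic equations, with $\bar u$ collecting all the new output and auxiliary variables, such that $\bigwedge_i t_i(\bar a)=s_i(\bar a)$ is equivalent to $A\models\exists\bar u.\,\phi_0(\bar a,\bar u)$. The generating tuple $\bar a$ is untouched, so the non-triviality requirement is literally the same in both instances, and satisfiability of the SAT$_{\mc{A}}$ instance coincides with that of the sSAT$_{\mc{A}}$ instance $\phi_0$. The translation runs in p-time because $\theta$ does and the $T_i,S_i$ are of linear length.

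For the converse reduction of sSAT$_{\mc{A}}$ to SAT$_{\mc{A}}$ in the presence of constants $0\neq 1$, I would read a given conjunction $\phi_0(\bar x,\bar u)$ of basic equations directly as a system of term equations, now taking the whole tuple $(\bar x,\bar u)$ as the list of generating variables: each conjunct $y=x$ or $y=f(\bar z)$ already is an equation between terms in these variables. This yields a SAT$_{\mc{A}}$ instance in linear time, and it remains only to match the two non-triviality conditions. Here the constants are decisive: every subalgebra contains the interpretations of $0$ and $1$, so the subalgebra generated by any tuple is non-trivial precisely when $0\neq 1$ in $A$, i.e.\ when $A$ itself is non-trivial. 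Hence both ``$\bar a$ generates a non-trivial subalgebra'' (for sSAT$_{\mc{A}}$) and ``$(\bar a,\bar c)$ generates a non-trivial subalgebra'' (for SAT$_{\mc{A}}$) reduce to non-triviality of $A$, and the two instances are satisfiable over exactly the same non-trivial $A\in\mc{A}$.

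I expect the alignment of the non-triviality requirements to be the only delicate point. In the forward direction it is automatic, since $\bar a$ is preserved and only bound auxiliary variables are introduced. In the backward direction the auxiliary variables $\bar u$ must be promoted to generators, which a priori enlarges the generated subalgebra; without constants one could have $(\bar a,\bar c)$ generating a non-trivial subalgebra while $\bar a$ alone generates a trivial one, and the equivalence would fail. The hypothesis $0\neq 1$ collapses both conditions to non-triviality of $A$ and thereby repairs the reduction, which is exactly why the full p-time equivalence is asserted only when the constants are present.
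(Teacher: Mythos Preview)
The paper states Fact~\ref{bsat} without proof, so there is no argument to compare against. Your proposal is correct and is exactly the natural justification: unnesting via Fact~\ref{ured} gives the forward reduction with the free variables $\bar x$ (and hence the non-triviality condition) unchanged, while in the presence of constants $0\neq 1$ the backward reduction works because both non-triviality conditions collapse to non-triviality of $A$ itself. Your final paragraph correctly isolates why the constants are needed for the converse.
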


\subsection{Feasibility}\lab{Feas}
We consider rings with basic operations
multiplication, addition, and subtraction and constants $0,1$.
For a  ring $R$,
the decision problem {\rm FEAS}$_{\mathbb{Z},R}$
is to decide, for any finite list of
terms
$p_i(\bar x)$,
 whether there is a  common zero
within $R$,
that is $\bar r \in R$ such that $R\models \bigwedge_i p_i(\bar r)=0$.
Thus, unless $R$ is a zero ring, this is 
just the problem SAT$_R$.

 According to \cite[Observation 1.9]{jacm},
this decision problem is p-time equivalent
to the analogous  decision  problem where the 
$p_i(\bar x)$ are
 multivariate polynomials
in non-commuting variables 
(commuting variables in case of commutative $R$) and with
integer coefficients,
each polynomial given as list of
monomials and coefficients.

The reduction is first from a list of terms $p_i(\bar x)$,
via Fact~\ref{ured},
to the
existentially quantified sentence obtained from the formula
$\bigwedge_i(\phi_i(\bar x,y_i,\bar z_i)\,\wedge\, y_i=0)$ where 
the $\phi_i(\bar  x,y_i,\bar z_i)$ are unnested terms,  each with separate auxiliary variables;
and then replacing in the latter each equation of the form
 $u=f(\bar v)$, $f$  a fundamental operation symbol,
by $u-f(\bar v)=0$.

\subsection{Retractive terms}
The hardness results to be derived, here, 
rely on 
the technique for reducing word problems to
free word problems, as  developed by Ralph Freese
\cite{freese} (in a much more sophisticated context).
Denote by $F(\mc{A};\pi)$ the $\mc{A}$-algebra
freely generated within ${\sf V}(\mc{A})$
 under the presentation $\pi=(\bar g,R)$,
 that is with  system $\bar g=(g_1,\ldots ,g_n)$
  of generator symbols   
and  set $R$ of relations.  Let  $\pi^+$ be obtained by extending $R$ to
a set  $R^+$ of relations in the same generator symbols $\bar g$.
Now,  consider
the  canonical homomorphism $\phi:F(\mc{A};\pi)\to F(\mc{A};\pi^+)$. 
$\phi$ is a retraction with  section homomorphism $\rho:F(\mc{A};\pi^+)\to F(\mc{A};\pi)$
if and only if there is a  system of terms $t_i(\bar x)$ such that
 for any
system $\bar a=(a_1,\ldots ,a_n)$ in 
any  $A \in \mc{A}$,  one has the following:
\begin{itemize}
\item
if $\bar a$ satisfies the relations $R$, then
 $(t_1(\bar a), \ldots ,t_n(\bar a))$ satisfies the relations $R^+$;
\item  if $\bar a$ satisfies the relations $R^+$, then
 $t_i(\bar a)=a_i$ for all $i$.
\end{itemize}
Here, $\rho$ and the $t_i$ are related by 
$\rho \phi g_i =t_i(\bar g)$ for $i=1,\ldots, n$. 
Such system of terms will be called \emph{retractive}
for $(\pi,\pi^+)$ within ${\sf V}(\mc{A})$.
 Retractions can be composed in steps:
 If $\pi'$ is obtained from $\pi^+$ by adding
relations and if
 the $s_i(\bar x)$
are retractive for $(\pi^+,\pi')$ within
${\sf V}(\mc{A})$ then so are the 
$s_i(t_1(\bar x), \ldots )$ for $(\pi,\pi')$.

As an illustration consider generator symbols $\bar g=(g_1,g_2,g_3,g_4)$,
$\mc{A}$ the variety of modular lattices, and $R$ the relations 
$g_1 \leq g_2$ and $g_3 \leq g_4$. Obtain $R^+$ by adding 
the relation $g_2\cdot g_4 \leq g_1 \cdot g_3$. 
Retractive terms for passing from $R$ to $R^+$
are given by $t_1= x_1+x_2\cdot x_4$, $t_2=x_2$, $t_3=x_3+x_2\cdot x_4$,
and $t_4=x_4$.

\subsection{Complemented modular lattices} 
We consider lattices $L$ with bounds $0,1$. The complexity results
to be derived are valid if the bounds are considered constants
in the signature as well if they are not.
$L$ is \emph{modular} if
$a\cap(b +c)= a\cap b +c$ for all $a,b,c$ with $c \leq a$ --
we write $a+b$  for joins, $a \cap b$ for meets and 
$\cap$ has binding priority  over $+$. 
Also, we use $\sum_i a_i$ and $\bigcap_i a_i$ for multiple joins and meets.
Elements 
$a_1, \ldots ,a_n$ 
of an interval $[u,v]$ are \emph{independent}
in  $[u,v]$  if $a_i\cap \sum_{j\neq i}a_j=u$
for all $i$; in this case, we write $\sum_i a_i=a_1 \oplus_u \ldots 
\oplus_u a_n$
and, if $u=0$, $\sum_i a_i=a_1\oplus \ldots \oplus a_n$.
A  lattice $L$ is \emph{complemented}
if for any $a \in L$ there is $b\in L$ such that $a\oplus b=1$.
A modular lattice  has \emph{height} $d$ if some (any)
maximal chain has $d+1$ elements.

Any complemented modular lattice $L$
of finite height is isomorphic  to a direct product of simple ones.
Thus, the decision problems SAT$_L$ and REF$_L$ break down
to the case of simple $L$.
 Up to isomorphism, the latter  are 
the subspace lattices of irreducible $(d-1)$-dimensional projective spaces.
With  exceptions for $d\leq 2$, these lattices
are isomorphic to lattices
${\sf L}(V_F)$ of all subspaces of
(left) $F$-vector spaces $V_F$,
$\dim V_F=d$, the case of most interest for us.
Thus, one  may read  all of the following in the context of  Linear Algebra.

\subsection{Frames}\lab{dframe}
Reducing arithmetic to modular (ortho-)lattices 
is most conveniently done via von Neumann frames
and their coordinate rings,
in particular if generators and relations 
 may be used
on the lattice side; cf. \cite{lip}.

A $d$-\emph{frame} $\bar a$ in a modular lattice 
 consists of elements $a_\bot \leq a_\top$
and $a_1,a_j,a_{1j} (1<j\leq d)$ in the interval $[a_\bot,a_\top]$
such that 
$a_\top =a_1 \oplus_{a_\bot} \ldots \oplus_{a_\bot} a_d$ and
$a_1 \oplus_{a_\bot} a_{1j}=a_j\oplus_{a_\bot} a_{1j}=a_1+a_j$ for $1<j\leq d$.

Given a $d$-frame $\bar a$
 in a modular lattice $L$ of height $d$,
if  $a_i>a_\bot$ for some  $i$, then $a_j>a_\bot$ for all $j$,
whence $[a_\bot,a_\top]$ has height at least $d$
and so $a_\bot=0$ and $a_\top =1$. 
In this case the frame is called 
 \emph{spanning};  the  $a_i$ are independent atoms
and $L$ is simple and complemented.
On the other hand, if $a_i=a_\bot$ for one $i$
then $a_j=a_\bot$ for all $j$ and $a_\bot=a_\top$;
such frame is called \emph{trivial}.
To summarize, a $d$-frame in
a height $d$ modular lattice is either spanning or trivial.
 Also, if $L$ is complemented and simple then  any atom $a_1$
is part of some (many, in general) spanning $d$-frame. Indeed,
the $a_i$ can be chosen so that $a_1, \ldots ,a_d$
are independent atoms and the $a_{1j}$
are  axes of perspectivity.

For $\dim V_F=d$,
the non-trivial $d$-frames in $\lt(V_F)$ have $a_\bot=0$, $a_\top=V$; 
and $\bar a$ is  a non-trivial $d$-frame       
if and only if there
is  a basis $e_1, \ldots ,e_d$ of $V_F$ such that
 $a_i=e_iF$, and $a_{1j}= (e_1-e_i)F$. 
Thus, the automorphism group of $\lt(V_F)$ acts
transitively on the set of such $d$-frames.
The following is well known and easy to prove.
\begin{fact}\lab{fred}
For any  $d$-frame $\bar a$ and $a_\bot \leq b_1 \leq a_1$,
the $b_i:=(b_1+a_{1j})\cap a_j$, $b_{1j:}=(b_1+b_j)\cap a_{1j}$,
 form a $d$-frame $\bar b$ with $b_\bot:=a_\bot$ and
$b_\top:=\sum_ib_i$ and such that
$\bar b=\bar a$ if $b_1=a_1$.
\end{fact}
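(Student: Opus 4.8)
The plan is to read each $b_j$ as the image of $b_1$ under the perspectivity with axis $a_{1j}$, and then verify the two defining conditions of a $d$-frame by direct applications of the modular law inside the intervals $[a_\bot, a_1 + a_j]$.

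First I would dispose of the direct-sum condition $b_\top = b_1 \oplus_{a_\bot} \cdots \oplus_{a_\bot} b_d$. Since $b_j = (b_1 + a_{1j}) \cap a_j \le a_j$ and $a_\bot \le b_j$, each $b_j$ lies in $[a_\bot, a_j]$; independence of $b_1, \ldots, b_d$ over $a_\bot$ is then inherited from that of $a_1, \ldots, a_d$, because $b_i \cap \sum_{k \ne i} b_k \le a_i \cap \sum_{k \ne i} a_k = a_\bot$ and $a_\bot \le b_i \cap \sum_{k\neq i} b_k$. This already gives $b_\top = \sum_i b_i = b_1 \oplus_{a_\bot} \cdots \oplus_{a_\bot} b_d$.

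The substantive step is the perspectivity condition $b_1 \oplus_{a_\bot} b_{1j} = b_j \oplus_{a_\bot} b_{1j} = b_1 + b_j$ with $b_{1j} := (b_1 + b_j) \cap a_{1j}$. For the join identities I would exploit two containments. The first, $b_j \le b_1 + a_{1j}$, is immediate from the definition of $b_j$; applying modularity to rewrite $b_1 + ((b_1 + b_j) \cap a_{1j}) = (b_1 + b_j) \cap (b_1 + a_{1j})$ and then using $b_1 + b_j \le b_1 + a_{1j}$ yields $b_1 + b_{1j} = b_1 + b_j$. The second is the auxiliary equality $b_j + a_{1j} = b_1 + a_{1j}$, obtained from the short computation $b_j + a_{1j} = ((b_1 + a_{1j}) \cap a_j) + a_{1j} = (b_1 + a_{1j}) \cap (a_j + a_{1j}) = (b_1 + a_{1j}) \cap (a_1 + a_j) = b_1 + a_{1j}$, the last equality because $b_1 + a_{1j} \le a_1 + a_{1j} = a_1 + a_j$; this gives $b_1 \le b_j + a_{1j}$ and hence, by the same modular maneuver applied on the $b_j$-side, $b_j + b_{1j} = b_1 + b_j$. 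The meet conditions are easier: since $b_{1j} \le a_{1j}$ one has $b_1 \cap b_{1j} \le a_1 \cap a_{1j} = a_\bot$ and $b_j \cap b_{1j} \le a_j \cap a_{1j} = a_\bot$, so both meets equal $a_\bot$.

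Finally, for the normalization $\bar b = \bar a$ when $b_1 = a_1$, I would substitute directly: $b_j = (a_1 + a_{1j}) \cap a_j = (a_1 + a_j) \cap a_j = a_j$ and $b_{1j} = (a_1 + a_j) \cap a_{1j} = a_{1j}$, whence $b_\top = \sum_i a_i = a_\top$. I do not anticipate any real obstacle: every identity is a one-line application of the modular law, so the only care required is bookkeeping, namely recording at each step which containment justifies the shearing identity and applying modularity in the correct direction. The least transparent ingredient is the auxiliary equality $b_j + a_{1j} = b_1 + a_{1j}$, which is precisely what couples the $b_j$-side of the perspectivity back to $b_1$.
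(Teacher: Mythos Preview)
Your proof is correct; each step is a clean application of modularity and the frame axioms, and the auxiliary identity $b_j + a_{1j} = b_1 + a_{1j}$ is exactly the right lever for the $b_j$-side of the perspectivity. The paper itself gives no proof of this fact, declaring it ``well known and easy to prove'', so there is nothing to compare against; your direct verification is the natural argument and fills in precisely what the paper omits.
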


Retractive terms for constructing
(equivalent variants of) $d$-frames in modular lattices have
been provided by Huhn \cite{huhn} and Freese \cite{ralph}; here,
using a list $\bar z$ of  variables for the elements of the $d$-frame to be obtained,  we 
denote such terms by
 $a_i(\bar z), a_{1j}(\bar z),
a_\bot(\bar z), a_\top(\bar z)$. The following is a special case of
\cite[Fact 5.2]{hz}.

\begin{lemma}\lab{discr}
For any $d$ there is a term $\delta_d(x,\bar z)$ 
such that for any spanning $d$-frame $\bar a$ in a  height $d$ modular lattice
$L$ and any $b \in L$ one has $\delta_d(b,\bar a)=1$ 
 if $b \neq 0$; moreover $\delta_d(0,\bar a)=0$.
\end{lemma}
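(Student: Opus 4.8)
The plan is to build $\delta_d$ as an explicit three-stage lattice term in $x$ and the frame variables $\bar z=(z_\bot,z_\top,z_i,z_{1j})$, writing $a_i,a_{1j}$ for these variables and $c_i,d_1$ for intermediate subterms: a first stage projecting $x$ onto each coordinate axis $a_i$, a second stage gathering any nonzero projection onto the single axis $a_1$, and a third stage spreading an occupied $a_1$ back over all axes so as to produce $\sum_i a_i=a_\top=1$. Throughout I would use that a spanning $d$-frame forces $a_\bot=0$ and $a_\top=1$, makes the $a_i$ independent atoms with each $m_i:=\sum_{k\neq i}a_k$ a coatom, and satisfies the frame axioms $a_1+a_{1j}=a_j+a_{1j}=a_1+a_j$ together with $a_1\cap a_{1j}=a_j\cap a_{1j}=0$.

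For the first stage I set $c_i:=(x+m_i)\cap a_i$. Since $m_i$ is a coatom and $a_i$ an atom, $x+m_i$ is either $m_i$ or $1$, so $c_i=a_i$ when $x\not\le m_i$ and $c_i=0$ when $x\le m_i$; in either case $c_i\in\{0,a_i\}$ and $c_i=0$ iff $x\le m_i$. For the second stage I put $d_1:=c_1+\sum_{j=2}^d (c_j+a_{1j})\cap a_1$. The frame axioms give $(a_j+a_{1j})\cap a_1=(a_1+a_j)\cap a_1=a_1$ and $a_{1j}\cap a_1=0$, so each summand equals $a_1$ precisely when $c_j=a_j$ and $0$ when $c_j=0$; hence $d_1=a_1$ if some $c_i=a_i$ and $d_1=0$ if all $c_i=0$. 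The third stage, $\delta_d(x,\bar z):=d_1+\sum_{j=2}^d (d_1+a_{1j})\cap a_j$, reverses the transport: if $d_1=a_1$ then $(a_1+a_{1j})\cap a_j=a_j$ and $\delta_d=\sum_i a_i=a_\top=1$, while if $d_1=0$ then every summand is $a_{1j}\cap a_j=0$ and $\delta_d=0$.

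Assembling the stages, $\delta_d(b,\bar a)=0$ exactly when every $c_i=0$, that is when $b\le\bigcap_i m_i$, and $\delta_d(b,\bar a)=1$ otherwise. Thus the whole lemma comes down to the single identity $\bigcap_i m_i=\bigcap_i\sum_{k\neq i}a_k=0$, which is the one point that is not a direct frame computation and where I expect the real work to sit. I would derive it from independence of the atoms $a_i$ via the standard modular-lattice rule $(\sum_{i\in I}a_i)\cap(\sum_{j\in J}a_j)=\sum_{k\in I\cap J}a_k$, intersecting the $m_i$ one at a time so that the index sets shrink to the empty intersection. Alternatively, since a height-$d$ modular lattice carrying a spanning frame is simple and complemented, hence isomorphic to some $\lt(V_F)$, I could verify both $\bigcap_i m_i=0$ and the atom dichotomy for $c_i$ directly in coordinates, where $m_i$ is the hyperplane spanned by the basis vectors $e_k$ with $k\neq i$ and $x\le m_i$ means exactly that the $i$-th coordinate of every vector in $x$ vanishes.
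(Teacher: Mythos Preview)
Your construction is correct. The three stages do exactly what you claim: in a spanning $d$-frame the $a_i$ are independent atoms with $\sum_i a_i=1$, so each $m_i=\sum_{k\neq i}a_k$ is a coatom complementing $a_i$; hence $c_i=(x+m_i)\cap a_i\in\{0,a_i\}$ with $c_i=0$ iff $x\le m_i$. The transport terms $(c_j+a_{1j})\cap a_1$ and $(d_1+a_{1j})\cap a_j$ behave as stated by the frame relations $a_1+a_{1j}=a_j+a_{1j}=a_1+a_j$ and $a_1\cap a_{1j}=a_j\cap a_{1j}=0$. The remaining point, $\bigcap_i m_i=0$, is indeed the standard identity $(\sum_{i\in I}a_i)\cap(\sum_{j\in J}a_j)=\sum_{k\in I\cap J}a_k$ for independent families in a modular lattice, applied inductively; your alternative verification in coordinates via $L\cong\lt(V_F)$ is also fine, since a height-$d$ simple complemented modular lattice with a spanning frame is of that form.

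As for comparison with the paper: there is nothing to compare at the level of argument, because the paper does not prove this lemma. It simply records it as a special case of \cite[Fact~5.2]{hz} and moves on. Your write-up therefore supplies an explicit, self-contained construction of $\delta_d$ that the paper omits. The construction you give (project to the axes, gather onto $a_1$ via the perspectivities $a_{1j}$, then spread back) is the natural one and presumably close in spirit to what the cited reference does, but the paper itself gives no indication either way.
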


\subsection{Coordinate ring}\lab{coo}
Fix $d\geq 3$. 
For each  operation symbol $+,-,\cdot,0,1$ 
in the signature of rings
and each term $t(\bar y)$
defining the associated operation,
 there is a lattice term $\tilde{t}(\bar y, \bar z)$
such that
each variable $y_i$ 
occurs only once in $\tilde{t}(\bar y, \bar z)$
and such that the following holds:
For    each 
 vector space $V_F$ and  $d$-frame $\bar a$ in $\lt(V_F)$
the set
\[R(\bar a)=\{ r \in {\sf L}(V_F)\mid r \oplus_{a_\bot}  a_2=  a_1+ a_2, \,r \cap a_2=a_\bot\}\] 
 becomes a ring, the \emph{coordinate ring} of $\bar a$,
with operations defined by $\tilde{t}(\bar y, \bar a)$ 
cf. \cite{ralph,freese}.
For example, for $t(y_1,y_2)= y_1 \cdot y_2$
one can choose 
$\tilde{t}(y_1,y_2,\bar z)= y_1 \otimes_{\bar z} y_2$ given as  
\[ [(y_2+z_{23})\cap (z_1+z_3)+ (y_1 +z_{13})\cap (z_2+z_3)] \cap (z_1+z_2).\]
Moreover, if $a_\bot=0$
then  there is a (unique) linear isomorphism 
 $\vep_{\bar a}: a_1 \to  a_2$ such that $ a_{12}=\{v-\vep_{\bar a} v\mid v \in  a_1\}$
as well as a ring  isomorphism $\omega_{\bar a}:\End( a_1) \to R(\bar a)$
given by $f\mapsto \{v-\vep_{\bar a} fv\mid v \in  a_1\}$.
 If $a_\bot=a_\top$ then $R(\bar a)$
is a zero ring.
If $\bar a$ is non-trivial and $\dim V_F=d$, then $R(\bar a)$ 
is isomorphic to $F$. 
We write $\otimes_{\bar a}$ and
$\ominus_{\bar a}$ for multiplication and subtraction. In particular,
the zero is $a_1$, the  unit is $ a_{12}$,
 and $r \in R(\bar a)$
is invertible if and only if $r \oplus_{a_\bot} a_1= a_1+ a_2$.
Due to the above mentioned unique occurrence of variables
one has the following.
\begin{fact}\lab{rtol}
With any ring term $p(\bar x)$ one associates
in p-time a lattice term $\tilde{p}(\bar x,\bar z)$
such that for any $d$-frame $\bar a$ in any $\lt(V_F)$ 
one has $\tilde{p}(\bar r,\bar a)=p(\bar r)$
for all $\bar r$ in $R(\bar a)$. 
\end{fact}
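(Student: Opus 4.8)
The plan is to prove Fact~\ref{rtol} by structural induction on the ring term $p(\bar x)$, translating each node of the term tree by the corresponding single-occurrence lattice term $\tilde f$ supplied in Subsection~\ref{coo} and composing these translations. The statement has two components: a correctness claim, that $\tilde p(\bar r,\bar a)=p(\bar r)$ on the coordinate ring, and an efficiency claim, that the association is p-time, equivalently that $\tilde p$ has length linear in that of $p$. I would carry both through the same induction, strengthening the induction hypothesis to include the length bound.

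For the base cases, a variable $x_i$ is translated to the lattice variable $x_i$ itself, to be evaluated at the ring element $r_i\in R(\bar a)$, and the constants $0,1$ are translated to the frame projections $\tilde 0(\bar z),\tilde 1(\bar z)$ of Subsection~\ref{coo}, that is to $a_1$ and $a_{12}$ read off from the frame. For the inductive step, if $p(\bar x)=f(q_1(\bar x),\ldots,q_m(\bar x))$ with $f$ one of $+,-,\cdot$, I set
\[
\tilde p(\bar x,\bar z):=\tilde f\bigl(\tilde q_1(\bar x,\bar z),\ldots,\tilde q_m(\bar x,\bar z),\bar z\bigr),
\]
substituting the already-constructed translations of the subterms for the ring-variable slots $y_1,\ldots,y_m$ of the definitional term $\tilde f$. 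By the induction hypothesis each $\tilde q_j(\bar r,\bar a)$ equals $q_j(\bar r)$ and so lies in $R(\bar a)$; since $\tilde f$ was chosen to compute the ring operation $f$ on genuine elements of the coordinate ring, evaluating the display at $(\bar r,\bar a)$ yields $f(q_1(\bar r),\ldots,q_m(\bar r))=p(\bar r)$. Thus closure of $R(\bar a)$ under the ring operations is exactly what keeps the argument of $\tilde f$ inside its domain of validity at each stage, and correctness is a routine unwinding of the induction.

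The step that needs care, and where the single-occurrence hypothesis on $\tilde f$ earns its keep, is the length bound. In the display each slot variable $y_j$ of $\tilde f$ occurs exactly once, so replacing $y_j$ by $\tilde q_j$ increases the size by $|\tilde q_j|-1$ rather than by a multiple of $|\tilde q_j|$; hence $|\tilde p|\leq c_d+\sum_j|\tilde q_j|$, where $c_d$ bounds the sizes of the finitely many basic translations $\tilde f$ for the fixed height $d$ (and absorbs the bounded number of frame-variable occurrences each $\tilde f$ introduces). Summing over the nodes of the term tree gives $|\tilde p(\bar x,\bar z)|=O(|p(\bar x)|)$ with constant depending only on $d$, and $\theta$-style traversal computes $\tilde p$ in p-time. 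Had $\tilde f$ repeated a slot variable, iterated substitution could multiply lengths at each level and produce an exponential blowup; so the single-occurrence normalization of the coordinate-ring operations is precisely the feature that makes the reduction efficient, and I expect this bookkeeping — rather than the correctness — to be the only non-routine point.
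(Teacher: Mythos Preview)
Your proposal is correct and matches the paper's own justification, which is simply the one-line remark ``Due to the above mentioned unique occurrence of variables one has the following'' preceding the Fact. Your structural induction with the length bookkeeping based on single occurrence of each $y_j$ in $\tilde f$ is precisely the argument the paper leaves implicit.
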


\section{$\mc{NP}$-hardness in 
complemented modular lattices}\lab{nph}
We will deal with simple complemented modular
lattices $L$ of finite height $d\geq 1$
(that is, $L$ isomorphic to the subspace lattice
of a $(d-1)$-dimensional irreducible projective space) and varieties generated by such lattices.
Here, the requirement that $\bar a$ generates a non-trivial
sublattice amounts to $\bar a$ being non-constant.

Observe that, for fixed finite $L$,
evaluating lattice  terms can be done in time
polynomial     in the  length of the  terms.
On the other hand, any $n$-generated sublattice of
$L$ of height at most  $2$ is isomorphic to $\bf 2$ or, for some $m \leq n$,
to the height $2$-lattice $M_m$ with $m$ atoms. 
Thus, if $L$ is finite or of height $2$ then both
SAT$_L$ and REF$_L$ are in $\mc{NP}$.

For lattices $L$, the case  where  the bounds are constants
and the case  where  they are not will not be distinguished
in notation, so that all results may be read both ways.
Observe in this context that the decision problems 
SAT and REF associated with the latter 
are subproblems of the former, obviously.

\subsection{Distributive lattices}\lab{dl}
\begin{theorem}\lab{ross}$($\cite{blon0, ross}$)$.  
For the $2$-element lattice $\bf 2$,
both {\rm SAT}$_{\bf 2}$ and {\rm REF}$_{\bf 2}$ 
are $\mc{NP}$-complete. 
In particular, the decision problem
for the equational theory of distributive lattices
is ${\rm co}\mc{NP}$-complete.
\end{theorem}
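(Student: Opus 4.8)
The plan is to prove NP-membership for both problems and then NP-hardness by a single dual-rail encoding of CNF-satisfiability that works in the pure lattice signature (bounds present as constants or not). Membership is the easy half and is exactly the observation recorded just before the statement: since $\mathbf{2}$ is finite and lattice terms are evaluated in time polynomial in their length, a nondeterministically guessed assignment $\bar a \in \{0,1\}^{|\bar x|}$ is a polynomial certificate, for REF$_{\mathbf{2}}$ that $t(\bar a)\neq s(\bar a)$ and for SAT$_{\mathbf{2}}$ that $\bar a$ is non-constant and satisfies all the equations. Moreover $\mathbf{2}$ generates the variety of distributive lattices, so an identity $t=s$ holds in every distributive lattice iff it holds in $\mathbf{2}$; hence the decision problem for ${\sf Eq}$ of distributive lattices is the complement of REF$_{\mathbf{2}}$, and its co-NP-completeness will follow at once from NP-completeness of REF$_{\mathbf{2}}$.

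For the hardness I would fix a CNF formula $\phi$ with clauses $C_1,\dots,C_m$ over Boolean variables $x_1,\dots,x_n$ and introduce lattice variables $p_i,q_i$, mapping literals by $x_i\mapsto p_i$ and $\neg x_i\mapsto q_i$. Let $\Phi=\bigcap_{j}\big(\sum_{\ell\in C_j}\ell^{*}\big)$ be the resulting monotone lattice term, and let $E=\sum_i(p_i\cap q_i)$ be an ``over-specification'' term, which in $\mathbf{2}$ is $1$ exactly when some coordinate has $p_i=q_i=1$. For REF$_{\mathbf{2}}$ I set $t:=\Phi+E$ and $s:=E$; both are pure lattice terms of size linear in that of $\phi$, so the construction is p-time.

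Correctness of the REF reduction rests on two observations. First, $E\le\Phi+E$ holds identically, so $s\le t$ is an identity and in $\mathbf{2}$ one has $t(\bar a)\neq s(\bar a)$ precisely when $t(\bar a)=1$ and $s(\bar a)=0$. Second, such an $\bar a$ exists iff $\phi$ is satisfiable: if $s(\bar a)=0$ then $E=0$, so no coordinate is over-specified and $t(\bar a)=\Phi(\bar a)$; setting $x_i:=p_i$, a true clause-image $\sum_{\ell\in C_j}\ell^{*}$ yields a true literal of $C_j$ (if the witnessing image is some $q_i$ then $E=0$ forces $p_i=0$, i.e.\ $x_i=0$), whence $\phi(\bar x)=1$; conversely a satisfying $\bar x$ gives $p_i:=x_i$, $q_i:=\neg x_i$ with $E=0$ and $\Phi=1$. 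Thus $(t,s)\in\,$REF$_{\mathbf{2}}$ iff $\phi$ is satisfiable, which settles REF$_{\mathbf{2}}$ and, by the first paragraph, the distributive-lattice claim. For SAT$_{\mathbf{2}}$ I reuse the encoding as a system of equations: with constants available, impose $p_i\cap q_i=0$, $p_i+q_i=1$ for all $i$ together with $\Phi=1$, which forces $q_i=\neg p_i$ and encodes satisfaction while $0\neq1$ makes non-triviality automatic. Without constants I substitute surrogate bounds, requiring $p_i\cap q_i=p_1\cap q_1$ and $p_i+q_i=p_1+q_1$ for all $i$ and $\Phi=p_1+q_1$; a non-constant solution cannot have $p_1\cap q_1=p_1+q_1$ (this would force every $p_i=q_i$ and collapse all generators to one value), so in $\mathbf{2}$ the common meet is $0$ and the common join is $1$, recovering complementarity and $\Phi=1$, while a satisfying assignment conversely supplies a non-constant solution.

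The main obstacle is that the lattice signature offers neither negation nor, a priori, usable constants, so one cannot simply substitute $\phi$, and the monotone term $\Phi$ by itself is always satisfiable (assign $1$ everywhere). The device that resolves this is the guard $E$: placing it on both sides forces $s\le t$ as an identity for REF, so that a point of inequality is pinned to a consistent ($E=0$) assignment, exactly where $\Phi$ faithfully computes $\phi$; the analogous role in SAT$_{\mathbf{2}}$ is played by the complementation equations (or, constant-free, by the surrogate meet and join). Checking that this guard makes both reductions correct, and that the constant-free variant genuinely forces $0$ and $1$ at the surrogate bottom and top, is the only delicate point; the evaluations and size bounds are routine.
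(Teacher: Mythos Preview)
Your proof is correct and is a genuine alternative to the paper's argument. Both use a dual-rail encoding (pairing each Boolean variable with a partner standing for its negation), but the mechanism that enforces consistency is different. The paper, starting from an arbitrary Boolean term $t(\bar x)$ in negation normal form, builds a ``switch'' term $\lambda_n(\bar x,\bar y,z)$ (recursively, $\lambda_0=z$ and $\lambda_{i+1}=\lambda_i\cap(x_i+y_i)+x_i\cap y_i$) which on $\mathbf 2$ is the identity in $z$ when $a_i\neq b_i$ for all $i$ and constant otherwise; the reduction is then to the single equation $\lambda_n(\bar x,\bar y,t^{\#}(\bar x,\bar y))=\lambda_n(\bar x,\bar y,x_1\cap y_1)$. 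Hardness of {\rm SAT}$_{\mathbf 2}$ is not argued directly but is deferred to the general reduction {\rm REF}$_L\to{\rm SAT}$_L$ via $d$-frames (Proposition~\ref{sattoSAT}). Your approach instead starts from CNF-SAT, uses the guard $E=\sum_i(p_i\cap q_i)$ on both sides to pin any refuting point to a consistent assignment, and handles {\rm SAT}$_{\mathbf 2}$ (with and without constants) by an independent self-contained argument. Your route is arguably more elementary and keeps the SAT case decoupled from the frame machinery; the paper's $\lambda_n$ device is slicker in that it handles arbitrary NNF terms and produces a single equation for {\rm REF}$_{\mathbf 2}$ without ever mentioning CNF, and its treatment of SAT feeds into the uniform {\rm REF}-to-{\rm SAT} reduction used later for higher-height lattices.
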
 
\begin{proof}
We  include the proof, due to Ross Willard,  for $\mc{NP}$-hardness
of REF$_{\bf 2}$ in the case without constants.
The claim for SAT$_{\bf 2}$ follows
from Proposition~\ref{sattoSAT}, below.
Given a string $\bar x=(x_1,\ldots, x_n)$ of variables,
choose new variables $y_i,z$ and put $\bar y=(y_1,\ldots, y_n)$.
Define the lattice terms $\lambda_i(\bar x,\bar y,z)$
 by recursion: $\lambda_0=z$, $\lambda_{i+1}=\lambda_i \cap (x_i+y_i)+
x_i\cap y_i$.
Observe that in $\bf 2$ the polynomial function
$\lambda_n(\bar a,\bar b,z)$ is identity if $a_i\neq b_i$ for all $i$,
constant otherwise.  Also observe that  $\lambda_n$ 
has length linear
in $n$ and  a single occurrence of $z$.

Now, consider a boolean term $t(\bar x)$ 
without constants $0,1$ and in negation normal form. 
Replacing each occurrence of $x_i^\perp$
(where $^\perp$ denotes negation) by $y_i$, one obtains
 a lattice term $t^\#(\bar x,\bar y)$
such that $t(\bar a)= t^\#(\bar a,\bar b)$
holds in $\bf 2$ if $b_i=a_i^\perp$ for all $i$.
Consider the lattice equation $\varepsilon=\varepsilon(\bar x,\bar y)$ given as
\[\lambda_n(\bar x,\bar y,t^\#(\bar x,\bar y))=\lambda_n(\bar x,\bar y,x_1\cap y_1).  \] 
Thus, $\varepsilon$ has length linear in that of $t(\bar x)$.
We claim that $t(\bar a)=1$ for some $\bar a$ in $\bf 2$ 
if and only if $\varepsilon(\bar a,\bar b)$ fails for some $\bar a,\bar b$
in $\bf 2$.
Indeed, given  $\bar a,\bar b$ 
such that $a_i \neq b_i$, that is $b_i=a_i^\perp$, for all $i$,
one has $\lambda_n(\bar a, \bar b,t^\#(\bar a))=
t(\bar a)$ while $\lambda_n(\bar a,\bar b,a_1\cap b_1)= 
a_1\cap b_1=0$.
Thus, if there is $\bar a$ such that $t(\bar a)=1$ choose
$b_i=a_i^\perp$ for all $i$ to fail $\varepsilon$.
Conversely, assume $t(\bar a)\neq 1$, that is $t(\bar a)=0$ for all
$\bar a$. 
Then ${\bf 2} \models\varepsilon(\bar a,\bar b)$
for all $\bar b$: giving both sides value $0$  if $a_i\neq b_i$ for all $i$,
the constant value of $\lambda_n(\bar a,\bar b,z)$, otherwise.
\end{proof}

\subsection{Reduction of {\rm REF}$_L$ to {\rm SAT}$_L$}\lab{sS} 
\begin{proposition}\lab{sattoSAT}
For fixed  $d$, there is a p-time reduction of {\rm REF}$_L$ to
{\rm SAT}$_L$, uniform for all modular lattices $L$ 
of height $d$ and admitting a non-trivial $d$-frame.
\end{proposition}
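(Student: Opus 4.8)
The plan is to reduce {\rm REF}$_L$ to {\rm SAT}$_L$ by using the discriminator term $\delta_d$ from Lemma~\ref{discr} together with the frame-constructing retractive terms to force a spanning frame into existence. First I would recall that, by Fact~\ref{ured}, it suffices to reduce uREF$_L$ to {\rm SAT}$_L$; so I start from an instance asking whether $T(\bar a)\neq S(\bar a)$ for some $\bar a$ in some $L$ of height $d$ admitting a non-trivial $d$-frame. The intuition is that a refutation $T(\bar a)\neq S(\bar a)$ produces, in a simple lattice, a nonzero ``difference'' element, and in a height $d$ lattice a single nonzero element can be expanded (via the frame terms $a_i(\bar z),a_{1j}(\bar z)$ and Fact~\ref{fred}) into a spanning $d$-frame. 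The discriminator $\delta_d$ then lets me turn the inequality $T\neq S$ into an equation asserting $\delta_d(\cdot)=1$, which in turn is satisfiable precisely when a genuine (non-trivial) frame is present.

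Concretely, I would introduce auxiliary variables $\bar z$ and an extra variable $w$, and build from the witness of $T(\bar a)\neq S(\bar a)$ a nonzero element — the natural candidate is the element measuring where $T$ and $S$ differ, for instance $w=\bigl(T(\bar x)+S(\bar x)\bigr)\cap\bigl(T(\bar x)\cap S(\bar x)\bigr)^{?}$; more robustly, since we work without orthocomplementation, I would instead feed the pair $(T,S)$ through a term that is nonzero exactly when $T\neq S$. In a simple complemented modular lattice any such nonzero element lies above an atom, and by the discussion preceding Fact~\ref{fred} any atom extends to a spanning $d$-frame; using the retractive frame terms applied to $\bar z$ (and to $w$) I can write down basic equations $\phi_0(\bar x,\bar z,w,\bar u)$ forcing $\bar a(\bar z)$ to be a $d$-frame whose $a_1$ sits below $w$. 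The SAT instance I output is the conjunction of these frame equations together with $\delta_d(w,\bar a(\bar z))=1$, asserting that $w$ is a nonzero element relative to a genuine frame. By Lemma~\ref{discr}, $\delta_d(w,\bar a(\bar z))=1$ forces the frame to be non-trivial (spanning) and $w\neq 0$, hence the generated subalgebra is non-trivial as required for {\rm SAT}.

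The correctness argument then runs in both directions. If $T(\bar a)\neq S(\bar a)$ in some $L$, the associated $w$ is nonzero; since $L$ is simple of height $d$, $w$ lies above an atom, which generates a spanning $d$-frame, so all the frame equations and $\delta_d=1$ are satisfiable with the $\bar a$ entries generating a non-trivial (indeed the whole simple) sublattice — giving a positive {\rm SAT} instance. Conversely, if the {\rm SAT} instance has a solution with non-trivial generated subalgebra, then $\delta_d(w,\bar a(\bar z))=1$ together with the frame equations forces (by Lemma~\ref{discr}) a spanning frame and $w\neq 0$; tracing back through the definition of $w$ yields $T(\bar a)\neq S(\bar a)$, a positive uREF instance. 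Uniformity across all height $d$ lattices admitting a non-trivial frame is automatic because $\delta_d$, the frame terms, and Fact~\ref{fred} are all uniform in $d$ and do not reference the particular $L$; the translation is clearly computable in p-time since all terms involved have length bounded polynomially (in fact linearly, by the single-occurrence properties noted in Lemma~\ref{discr} and the frame construction) in $|T|+|S|$.

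The main obstacle I anticipate is pinning down the precise term $w$ that is provably nonzero exactly when $T\neq S$, and ensuring it is positioned correctly relative to the frame so that $\delta_d$ can be applied. In the ortholattice setting one would use orthocomplementation to build a clean ``symmetric difference,'' but here we have only the lattice operations and the bounds, so I must extract a nonzero element from the mere inequality $T\neq S$ without complementation. The cleanest route is probably to argue at the level of atoms: in a simple height $d$ lattice, $T\neq S$ means one of $T,S$ covers a point not below the other, and I would use the frame-expansion of Fact~\ref{fred} starting from that point; the delicate bookkeeping is making the choice of $w$ definable by a single term uniformly, rather than by a case distinction, so that the reduction stays within the syntactic framework of basic equations feeding {\rm sSAT}$_L$ via Fact~\ref{bsat}.
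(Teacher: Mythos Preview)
Your overall strategy---use a $d$-frame and an atom witnessing the inequality---is the right one, but the gap you flag at the end is real and you have not closed it. There is no lattice term $w(\bar x)$ (even with constants $0,1$) whose value is nonzero exactly when $T(\bar x)\neq S(\bar x)$; already in the two-element lattice the symmetric difference is not a lattice polynomial. Introducing $w$ as an extra existentially quantified variable does not help unless you write down \emph{equations} linking $w$ to $T,S$ that force $w\neq 0\Leftrightarrow T\neq S$, and you have not done so. The discriminator $\delta_d$ is also a detour here; the paper does not use it for this proposition.

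The paper closes the gap with two simple moves. First, it normalizes: replace $t$ by $t\cap s$ and $s$ by $t+s$, so that $t\leq s$ holds identically and refutation becomes the strict inequality $t(\bar b)<s(\bar b)$. Second, instead of manufacturing a difference element, it lets the frame atom $z_1$ \emph{itself} be the witness. The SAT instance is just the conjunction of the $d$-frame relations $\phi(\bar z)$ together with
\[
z_1\leq s(\bar x),\qquad z_1\cap t(\bar x)=z_\bot,\qquad z_\bot\leq x_i\leq z_\top\ \ (1\leq i\leq n).
\]
If $t(\bar b)<s(\bar b)$, pick an atom $a_1\leq s(\bar b)$ with $a_1\not\leq t(\bar b)$ and extend it to a spanning $d$-frame. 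Conversely, the sandwich constraints $z_\bot\leq x_i\leq z_\top$ are the device that replaces your use of $\delta_d$: if the solution $(\bar b,\bar a)$ is non-constant (as SAT requires) then $a_\bot\neq a_\top$, so in height~$d$ the frame is spanning, $a_1\neq a_\bot=0$, and $a_1\leq s(\bar b)$, $a_1\cap t(\bar b)=0$ give $t(\bar b)<s(\bar b)$. No retractive frame terms, no $\delta_d$, and no ``difference term'' are needed.
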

\begin{proof}
The proof is the same with and without constants $0,1$.
 It suffices
to consider $t(\bar x),s(\bar x)$  such that
$\forall \bar x.\,t(\bar x) \leq s(\bar x)$ holds in all lattices
(replace $t$ by $t\cap s$ and $s$ by $t+s$).
Let $\phi(\bar z)$ be  the conjunction of equations defining
a $d$-frame. Now consider the following
conjunction $\psi(\bar x,\bar z)$ of lattice equations
(where $x\leq y$ means $x+y=y$)
\[\phi(\bar z) \;\wedge\; z_1\leq s(\bar x) 
\, \wedge\; z_1 \cap t(\bar x)=z_\bot\;\wedge
\;\bigwedge_{i=1}^n\; z_\bot \leq x_i\leq z_\top. \] 
Given $\bar b$ in $L$ such that
$t(\bar b)< s(\bar b)$, 
there is an atom $a_1 \leq  s(\bar b)$,
$a_1 \not\leq t(\bar b)$ (since $L$ is geometric)
and that  extends to a spanning $d$-frame $\bar a$; thus $L\models
\exists \bar x \exists \bar z.\;\psi(\bar x,\bar z)$.
Conversely, given non-constant  $\bar b,\bar a$ in $L$
such that $L \models \psi(\bar b,\bar a)$
  one must have $a_\bot \neq a_\top$,  whence
$\bar a$ is spanning; it follows that $t(\bar b) <s(\bar b)$.
\end{proof}

\subsection{$\mc{NP}$-hardness}
\begin{theorem}\lab{np} For any non-trivial modular lattice $L$
of height $d$ and admitting a nontrivial $d$-frame,
the decision problems
{\rm REF}$_L$ and {\rm SAT}$_L$ are $\mc{NP}$-hard;
these problems are $\mc{NP}$-complete if  $L$ is finite
or of height $2$.
\end{theorem}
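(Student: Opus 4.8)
The plan is to reduce a known $\mc{NP}$-complete problem to $\text{REF}_L$ (and hence, via Proposition~\ref{sattoSAT}, to $\text{SAT}_L$), using the $2$-element lattice as the gateway. By Theorem~\ref{ross}, $\text{REF}_{\bf 2}$ is $\mc{NP}$-hard, so it suffices to give a p-time reduction from $\text{REF}_{\bf 2}$ to $\text{REF}_L$. The key observation is that any $L$ of height $d$ admitting a nontrivial $d$-frame contains a copy of $\bf 2$ realized \emph{lattice-theoretically} inside the frame machinery: given a spanning $d$-frame $\bar a$, the interval $[a_\bot,a_\top]$ together with the coordinate apparatus lets us simulate Boolean behaviour. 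More precisely, I would exploit the discriminator term $\delta_d(x,\bar z)$ of Lemma~\ref{discr}, which collapses every nonzero element to $1$ and fixes $0$, thereby turning the height-$d$ lattice into a two-valued object along the relevant coordinates.

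First I would take an instance $t(\bar x),s(\bar x)$ of $\text{REF}_{\bf 2}$. The goal is to produce lattice terms (in the signature of $L$) whose inequality is realizable in $L$ precisely when $t \neq s$ is realizable in $\bf 2$. The natural route is to encode each Boolean variable $x_i$ as an element of $\{a_\bot,\,a_1\}$ (or more robustly as an atom versus $a_\bot$ inside a frame), interpret $\cap$ and $+$ directly as lattice meet and join — which agree with Boolean $\wedge,\vee$ on the two-element sublattice $\{a_\bot, c\}$ for any atom-like $c$ — and then apply $\delta_d$ to normalize. Here the retractive-term technology of Section~\ref{dframe}, i.e.\ the terms $a_i(\bar z),a_{1j}(\bar z),a_\bot(\bar z),a_\top(\bar z)$, is what guarantees that from \emph{arbitrary} assignments we can extract a genuine $d$-frame: this is exactly the device that lets a reduction be uniform and p-time, since we never have to presuppose that the input already codes a frame. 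Composing the Boolean term with $\delta_d$ and the frame-construction terms yields, in linear (hence p-time) size, a lattice term that behaves as a two-valued function, so $t \neq s$ over $\bf 2$ transfers to a term-inequality realizable over $L$.

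For the reverse direction of the reduction one must check that a witnessing assignment in $L$ forces the frame to be \emph{spanning} rather than trivial (otherwise everything collapses to $a_\bot=a_\top$ and no inequality survives). This is where the finite-height hypothesis and the dichotomy ``spanning or trivial'' from Section~\ref{dframe} are used: if the constructed inequality is witnessed, the discriminator output must be nonconstant, which rules out the trivial frame and pins the assignment into the two-element sublattice, from which a genuine Boolean witness for $t \neq s$ is read back off. Thus both directions hold and $\text{REF}_L$ is $\mc{NP}$-hard; the $\text{SAT}_L$ case follows by Proposition~\ref{sattoSAT}.

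Finally, the membership claim: when $L$ is finite, evaluating lattice terms is polynomial in term length (noted in the text), so a nondeterministically guessed assignment certifies membership in $\mc{NP}$, giving $\mc{NP}$-completeness. For height $1$ the lattice is just $\bf 2$, and for height $2$ every $n$-generated sublattice is one of the finitely-describable lattices $M_m$ with $m \leq n$ atoms, so again a guessed assignment is verifiable in p-time and membership in $\mc{NP}$ follows. The main obstacle I anticipate is the reverse direction of the reduction — ensuring that a solution in $L$ cannot ``cheat'' by using the trivial frame or by exploiting the extra elements of $[a_\bot,a_\top]$ beyond the intended two-valued copy; handling this cleanly is precisely what $\delta_d$ and the spanning/trivial dichotomy are designed to accomplish, and getting the term composition to respect single occurrences (to avoid exponential blowup) is the delicate bookkeeping step.
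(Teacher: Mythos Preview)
Your proposal is correct and follows essentially the same route as the paper: substitute each $x_i$ by $\delta_d(z_\bot + z_\top \cap x_i,\bar z)$ (with $\bar z$ passed through the retractive frame terms), so that every assignment in $L$ either collapses to a trivial frame or lands in the two-element sublattice $\{0,1\}$, and then invoke $\mc{NP}$-hardness of $\text{REF}_{\bf 2}$ together with Proposition~\ref{sattoSAT} and the $\mc{NP}$-membership remarks at the start of Section~\ref{nph}. Your worry about exponential blowup is unfounded here, since the substituted term $x_i'$ has size depending only on the fixed $d$, so replacing each occurrence of $x_i$ gives only a constant-factor increase.
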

\begin{proof}
To prove hardness, in view of Proposition~\ref{sattoSAT} it remains to consider
REF$_L$ for $L$ of fixed height $d$. Again, the proof is the same
whether the bounds $0,1$ are considered constants or not.
 Given a lattice term $t(\bar x)$,
let $\bar x'=(x_1',\ldots, x_n')$ where
 \[x_i':= \delta_d(z_\bot +z_\top \cap x_i,\,\bar z)\]
(from Lemma \ref{discr}) and put  
 $t'(\bar x,\bar z)=t(\bar x')$.  
Observe that for any assignment $\gamma$ for $\bar x,\bar z$ in $L$
one has 
either  $\gamma  \bar z$
trivial and $\gamma x_i'=\gamma z_\bot$ or
 $\gamma \bar z$ spanning and
$\gamma  x_i'\in \{0,1\}$; also, in the latter case,
$\gamma x_i'=\gamma x_i$ if $\gamma x_i \in \{0,1\}$.
 
Now, consider a second term $s(\bar x)$. Then there is an
assignment
 $\gamma$ for $\bar x$ in $\{0,1\}$
such that $t( \gamma \bar x)\neq s(\gamma \bar x)$
if and only if there is an assignment  
 $\gamma'$ for $\bar x\bar z$ in $L$ such that 
$s'(\gamma' \bar x \bar z)\neq t'(\gamma' \bar x \bar z)$.  
Namely, given $\gamma$ choose $\gamma' \bar z$ any spanning frame
and $\gamma' \bar x=\gamma \bar x$.
Conversely, given $\gamma'$ choose $\gamma  x_i := 
\delta_d(\gamma' z_\bot +\gamma' z_\top  \cap \gamma'x_i,\,\gamma \bar z)$.
We are done by $\mc{NP}$-hardness of REF$_{\bf 2}$, as 
shown in the proof of Theorem~\ref{ross}.
\end{proof}

\section{Relating feasibility to lattices}\lab{fsl}
Recall from Section~\ref{dl}
the remark on constants.

\begin{theorem} For fixed $3\leq d \leq \infty$
and $d$-dimensional  $F$-vector space $V_F$,
both {\rm SAT}$_{{\sf L}(V_F)}$ and 
 {\rm REF}$_{{\sf L}(V_F)}$ are p-time equivalent to 
{\rm FEAS}$_{\mathbb{Z},F}$.
\end{theorem}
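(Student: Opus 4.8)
The plan is to prove the two-way p-time equivalence by coordinatizing to pass from ring data to lattice data, and by a matrix (range/annihilator) encoding to pass back, fixing a finite $d\ge 3$ first and reducing the case $d=\infty$ to a finite one at the end. For the direction \fs$\to$ lattice, given ring terms $p_i(\bar x)$ I would introduce frame variables $\bar z$ and, via Fact~\ref{rtol}, the lattice terms $\tilde p_i(\bar x,\bar z)$. For SAT I form the conjunction of the frame equations $\phi(\bar z)$ of a $d$-frame, the coordinate-ring membership conditions $x_j\oplus_{z_\bot}z_2=z_1+z_2$ and $x_j\cap z_2=z_\bot$, and the equations $\tilde p_i(\bar x,\bar z)=z_1$ (the ring zero). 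A satisfying assignment generating a non-trivial sublattice forces $z_\bot\ne z_\top$, hence a spanning frame by the dichotomy of Section~\ref{dframe}, hence $R(\bar z)\cong F$ by the coordinatization of Section~\ref{coo}; thus $\bar x$ is a common zero of the $p_i$ in $F$, and conversely a common zero placed in the coordinate ring of any spanning frame satisfies the conjunction non-trivially. For \fs$\to$REF I instead produce a single inequation: using retractive frame terms to turn $\bar z$ into a frame, a normalization term $\hat x_j$ carrying $x_j$ into $R(\bar z)$, and the discriminator $\delta_d$ of Lemma~\ref{discr}, I set $d_i:=z_2\cap(\tilde p_i(\hat{\bar x},\bar z)+z_1)$, so that $d_i=z_\bot$ exactly when $\tilde p_i=z_1$ (that is $p_i=0$) and $d_i=z_2$ otherwise, and compare $T:=z_\top$ with $S:=z_\bot+\delta_d(\sum_i d_i,\bar z)$. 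Then $T\ne S$ is achievable precisely when a spanning frame carries a common zero, i.e. precisely when \fs is feasible.

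For the harder direction SAT$\,,$REF$\to$\fs I would first invoke Fact~\ref{ured} and Fact~\ref{bsat} to reduce to unnested terms and to sSAT, so that only the basic equations $y=x$, $y=x_1+x_2$, $y=x_1\cap x_2$ and (in)equalities of outputs remain. I represent each subspace $a$ of $V_F\cong F^d$ by a pair of $d\times d$ matrices $(A,A^\circ)$ with $\mathrm{col}(A)=a=\ker(A^\circ)$; crucially this linkage is a polynomial condition, namely $A^\circ A=0$ together with solvability of $AH+H'A^\circ=I_d$ in auxiliary matrices $H,H'$ (every exact sequence of $F$-spaces splits, so $\ker A^\circ\subseteq\mathrm{col}\,A$ follows from the identity). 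In the range representation a join is a concatenation, and in the annihilator representation a meet is a stacking, so $y=x_1+x_2$ becomes $\mathrm{col}(Y)=\mathrm{col}(X_1)+\mathrm{col}(X_2)$ and $y=x_1\cap x_2$ becomes $\ker(Y^\circ)=\ker(X_1^\circ)\cap\ker(X_2^\circ)$, each expressed by mutual-containment identities $M=NP$; the linkage conditions recover the missing member of each pair. Equality of subspaces is mutual containment, while the inequation $y_T\ne y_S$ for REF, and the non-triviality disjunction $\bigvee_{i<j}a_i\ne a_j$ for sSAT without constants, are handled by the device $g\,w=1$ forcing $g\ne 0$ and by the product-of-systems encoding of a disjunction of feasibility instances, both available because $F$ is a division ring. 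The outcome is a single \fs instance of size polynomial in the input.

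Combining these reductions gives SAT$\equiv_p$\fs from the SAT directions, and REF$\equiv_p$\fs from the REF directions (alternatively one closes the loop through Proposition~\ref{sattoSAT}). For $d=\infty$ I reduce to the finite case. In the direction REF$\to$\fs one uses that an inequation $T\ne S$ refutable in ${\sf L}(V_F)$ is already refutable in a finite-dimensional section: a witness $v\in T(\bar a)\setminus S(\bar a)$ is derived from finitely many vectors, so replacing each $a_i$ by the span of the vectors entering this derivation yields subspaces of a finite-dimensional $W$ with $v\in T(\bar a')$ while $v\notin S(\bar a')$ by monotonicity; the resulting ambient dimension is bounded linearly in $|T|+|S|$, and since ${\sf L}(F^{m'})$ embeds as an interval of ${\sf L}(F^{m_0})$ for $m'\le m_0$, refutability in ${\sf L}(V_F)$ is equivalent to refutability in ${\sf L}(F^{m_0})$ for this p-time bound $m_0$, whence the finite-dimensional encoding of the second paragraph applies. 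The direction \fs$\to$REF needs, in addition, that the coordinate ring realized be $F$ itself rather than a proper matrix ring $\mathrm{Mat}_k(F)\cong\End(a_1)$ arising from a non-atomic frame; I would secure this by relativizing the whole construction to the interval $[z_\bot,z_\top]$ and forcing the frame to be atomic there.

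I expect the two genuine obstacles to be exactly the meet in the second paragraph and this atomicity point for $d=\infty$. The meet cannot be encoded by mutual containment alone, since maximality of $a\cap b$ is a dimension condition and there is no bilinear form on $V_F$ to supply a substitute for orthocomplementation; the resolution is to carry the annihilator representation $A^\circ$ alongside the range representation $A$ and to express their compatibility $\mathrm{col}(A)=\ker(A^\circ)$ by the splitting identity $AH+H'A^\circ=I_d$, after which meet and join become formally dual and both reduce to linear solvability over $F$. The infinite-dimensional hardness direction is delicate precisely because a common zero of the $p_i$ over $\mathrm{Mat}_k(F)$ need not lift to $F$, so the soundness of the reduction hinges on ruling out non-atomic frames; I anticipate this is where the argument requires the most care.
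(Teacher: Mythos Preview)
Your finite-$d$ argument is correct and runs parallel to the paper's, with one genuinely different choice. The paper closes the cycle
\[
\mathrm{REF}\;\to\;\mathrm{SAT}\;\to\;\mathrm{sSAT}\;\to\;\mathrm{FEAS}\;\to\;\mathrm{REF}
\]
via Proposition~\ref{sattoSAT}, Fact~\ref{bsat}, Subsection~\ref{SF}, and Subsection~\ref{Fs}. Your $\mathrm{FEAS}\to\mathrm{REF}$ is essentially Subsection~\ref{Fs}: retractive frame terms plus forcing, ending in an inequation. One caution on wording: there is no stand-alone ``normalization term $\hat x_j$'' in the plain lattice signature that carries an arbitrary element into $R(\bar a)$ while fixing elements already there; what exists---and what Subsection~\ref{Fs} does---is a sequence of frame reductions via Fact~\ref{fred} that simultaneously shrinks the frame and adjusts the $x_j$, so that in height~$d$ the outcome is either the original spanning frame with $\hat x_j\in R(\bar a)$ or a trivial frame. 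Your $\delta_d$-based inequation is a harmless variant of the paper's $a_\bot\neq a_\top$. The real divergence is in $\mathrm{sSAT}\to\mathrm{FEAS}$: the paper encodes $y=z\cap u$ by a Zassenhaus-style $2d\times 2d$ block identity, whereas you carry a companion matrix $A^\circ$ with $\mathrm{col}(A)=\ker(A^\circ)$, certified by $A^\circ A=0$ together with the splitting identity $AH+H'A^\circ=I_d$, and thereby treat meet and join dually. Both work; yours is more symmetric at the price of doubling the matrix data per variable, and your handling of inequations via $gw=1$ and products over a division ring is fine.

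Drop the $d=\infty$ paragraph: the ``$\leq\infty$'' in the statement is a slip. The paper's own proof (Subsections~\ref{SF} and~\ref{Fs}) is written for finite $d$, relying throughout on the dichotomy that a $d$-frame in a height-$d$ lattice is spanning or trivial. More decisively, the Introduction recalls that $\mathrm{SAT}_{{\sf L}(V_F)}$ is \emph{undecidable} once $\dim V_F$ is infinite, so for any $F$ with decidable $\mathrm{FEAS}_{\mathbb{Z},F}$ (say $F=\mathbb{R}$) no p-time equivalence with $\mathrm{SAT}$ can hold. Your diagnosis of the obstruction---a non-atomic frame yields coordinate ring $\mathrm{Mat}_k(F)$ rather than $F$---is exactly right, and it is not repairable: atomicity of $a_1$ is not expressible by a lattice identity in ${\sf L}(V_F)$ (for instance because ${\sf L}(F^{2d})$ contains a $d$-frame with $\dim a_1=2$ satisfying every identity a spanning $d$-frame in ${\sf L}(F^d)$ does), so no retractive-term construction can force it.
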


\noindent
This follows from Proposition \ref{sattoSAT}, Fact~\ref{bsat}, and Subsections
\ref{SF} and \ref{Fs} below.

\subsection{Reduction of ${\rm sSAT}_ {\lt(V_F)}$
 to {\rm FEAS}$_{\mathbb{Z},F}$}\lab{SF}
The reduction of sSAT$_{\lt(V_F)}$ 
to FEAS$_{\mathbb{Z},F}$ via
non-determinism and BSS-machines is
contained in \cite[Proposition 2.1]{jacm}.
We sketch a direct  proof. One has  to consider
existentially quantified conjunctions of equations
of the form $x=y+z$, $x=y\cap z$, $x=0$,
$x=1$, and $x=y$. Using the partial order on $\lt(V_F)$,
each of these can be obtained as a conjunction of formulas
of the form  
$x \leq y$, $x \leq 0$, $1 \leq x$, $x \leq y+z$, and $y =z\cap u$.
Thus, each of the latter has to be translated into a 
feasibility condition.

 Dealing with $\lt(V_F)$ 
for fixed  $\dim V_F=d<\infty$, we may assume $V_F=F^d$.
Associate with $0$ and  $1$ the $d\times d$ zero and unit matrices
$O$ and  $I$,
and with  each variable  $\xi$
 a $d\times d$-matrix $\hat{\xi}$
of new variables to be interpreted in the ring
$F$; namely, 
  $\hat{\xi} \mapsto A$ 
corresponding to  $\xi \mapsto {\rm Span}(A)$
where ${\rm Span}(A)$ denotes the subspace of $F^d$
spanned by the columns of $A$.
In our translation,  quantifications over matrices of variables
always are with new variables (that is, specific
to the formula translated). 
Observe that $\Sp(A) \leq \Sp(B)$ if and only if
$A=BU$ for some matrix $U$. Thus, $x \leq y$ translates into
\[ \exists U.\,\hat{x}=\hat{y} U; \] 
and, if $0,1$ are considered constants, $x \leq 0$ into $\hat{x}=O$ and $1\leq x$ into $\exists U.\, I=\hat{x}U$.
Also observe that
$\Sp(A)\leq \Sp(B)+\Sp(C)$ if and only if 
$A=BU_1+CU_2$ for suitable matrices $U_1,U_2$.
Thus, we translate $x\leq y+z$  
 into
\[\exists U_1 \exists U_2\,
\hat{x}=\hat{y}U_1+\hat{z}U_2.\]  
 Meets are dealt with
using the idea underlying the Zassenhaus algorithm:
$y=z\cap u$ is translated into 
\[\exists X\exists Y \exists Z \exists U.\;
  \left(\begin{array}{cc} \hat{z}& \hat{u}\\
\hat{z}& 0 \end{array}\right)X
= 
 \left(\begin{array}{cc} Z& 0\\
U& \hat{y} \end{array}\right) \;\wedge\;
\left(\begin{array}{cc} Z& 0\\
U& \hat{y} \end{array}\right)Y
= 
 \left(\begin{array}{cc} \hat{z}& \hat{u}\\
\hat{z}& 0 \end{array}\right)
\]
with $2d\times 2d$-block matrices.
Observe that all these translations yield  conjunctions of
at most quadratic equations in non-commuting variables;
for  commutative  $F$, 
these can be converted in p-time into such
conjunctions  with commuting variables.

\subsection{Reduction of {\rm FEAS}$_{\mathbb{Z},F}$ to {\rm REF}$_L$}\lab{Fs}
This does not require the bounds $0,1$ to be considered constants.
Consider 
 $\vv$ where   $\dim V_F=d$  and recall that 
spanning $d$-frames $\bar a$  exist 
and that any non-spanning $d$-frame is trivial.
 Also observe that 
Fact~\ref{fred} allows one to pass from a
$d$-frame to a new one, given $a_\bot \leq b_1 \leq a_1$;
given  $a_\bot\leq b_2 \leq a_2$ first form $b_1:= a_1\cap (b_2+a_{12})$.
 We will apply this
reduction procedure to force relations.
In doing so, we will either have $\bar a=\bar b$ 
spanning or $\bar b$ trivial. Also, if $\bar a$ is trivial then
$\bar a=\bar b$.
Thus, we may think of a fixed but 
arbitrary spanning $d$-frame $\bar a^\ell$ to deal with in each step.

Given ring terms $p_k(\bar x)$,
we may consider the $x_i$ as lattice variables, too.
We start with the $d$-frame $\bar a^1$.
Given an assignment $r_i \in \vv$ for the $x_i$,
 put $r^1_i:=(a^1_\bot+ r_i)\cap (a^1_1+a^1_2)$ to achieve $a^1_\bot \leq r_i^1 
\leq a^1_1+a^1_2$.
Applying the reduction step with  $b_2:=a^1_2\cap \sum_i r^1_i$
one obtains a frame $\bar a^2$ and $r_i^2=r_i^1 \cap a^2_\top$ such that
 $r^2_i \cap a^2_2=a^2_\bot$. 
Forcing via $b_1:= a^2_1\cap \bigcap_i (r^2_i+a^2_2)$
one obtains $\bar a^3$ and $r_i^3$ such that
  $r^3_i\oplus_{a^3_\bot} a^3_2=a^3_1+a^3_2$, that is
$r_i^3 \in R(\bar a^3)$. 
Forcing via $b_1:=  a^3_1\cap \bigcap_k \tilde{p}_k(\bar r^3,\bar a^3)$
one obtains $\bar a^4$ such that
 $a^4_1=\tilde{p}_k(\bar r^4,\bar a^4)$ for all $k$,
that is $R(\bar a^4) \models p_k(\bar r^4)= 0$.
(This strongly relies on the fact that $\bar a=\bar b$ or $\bar b$ 
trivial. In general, reduction of frames 
will preserve only very special relations, as in the ingenious work of
Ralph Freese \cite{ralph,freese}.)

In order to implement the reduction of {\rm FEAS}$_{\mathbb{Z},F}$ to {\rm REF}$_L$, 
recall the retractive terms 
$\bar a=\bar a(\bar z)$ for $d$-frames 
to start with.
The forcing process described, above, can be captured by
a sequence of tuplets of lattice terms, obviously.
The result 
are (tuplets of)  lattice terms 
$\bar a^\#(\bar x,\bar z)$ and $\bar r^\#(\bar x,\bar z)$
such that, for any substitution $\gamma$ in $\vv$
for the variables  $\bar x$ and $\bar z$,
one has that   $\bar a^\#=\bar a^\#(\gamma \bar x, \gamma \bar z)$
is a $d$-frame   and that, with  $r_i^\#:=r_i^\#(\gamma \bar x, \gamma \bar z)$,
 either $\bar a^\#$ is trivial and $r_i^\#=a^\#_\bot$ for all $i$
or that $\bar a^\#$ is spanning and $\bar r^\#$ 
is a common zero of the $p_k$ in $R(\bar a^\#)$. 
Moreover, if  $\gamma \bar z$ is a spanning $d$-frame and 
$\gamma \bar x$ a common zero of the $p_k(\bar x)$ in $R(\gamma \bar z)$,
then $\bar a^\#(\gamma \bar x,\gamma \bar z)=\gamma\bar z$
and $\bar r^\#(\gamma \bar x,\gamma \bar z)=\gamma\bar x$.
Summarizing,
the $p_k$ have a common zero in $F$ if and only if there is
a substitution  $\bar \gamma$ in $\vv$ such that 
$a^\#_\bot(\gamma \bar x, \gamma \bar z)\neq
a^\#_\top(\gamma \bar x, \gamma \bar z)$.

\subsection{Varieties generated by complemented modular lattices}

\begin{fact}\lab{dr}
Given an atomic
 complemented modular lattice $L$
and  lattice terms $s(\bar x)$, $t(\bar x)$ such that
$L\models \forall  \bar x.\, s(\bar x) \leq t(\bar x)$.  
If $\exists \bar x.\, s(\bar x)<t(\bar x)$
holds in $L$
then it does so in a section $[0,u]$ of $L$
where the height of $u$ is at most the number 
of occurrences of variables in $t(\bar x)$.
\end{fact}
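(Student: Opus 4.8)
The plan is to pin down a single atom witnessing the strict inequality and then shrink the assignment so that this atom is still forced below $t$, while controlling the total height by the number of variable occurrences. Since $L$ is atomic and $s(\bar b)<t(\bar b)$ for some $\bar b$, first fix an atom $p$ with $p\le t(\bar b)$ and $p\not\le s(\bar b)$. Lattice terms are monotone, so once I produce $b_i'\le b_i$ with $p\le t(\bar b')$, monotonicity gives $s(\bar b')\le s(\bar b)$, hence $p\not\le s(\bar b')$, while the hypothesis $L\models s\le t$ gives $s(\bar b')\le t(\bar b')$; together these force $s(\bar b')<t(\bar b')$. Taking $u:=\sum_i b_i'$ and passing to the section $[0,u]$ (in which joins and meets of elements below $u$ agree with those in $L$, so every subterm value stays $\le u$ and the two evaluations coincide) then exhibits the required witness, provided $\mathrm{height}(u)\le o(t(\bar x))$.

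The heart of the argument is an inductive claim on the structure of $t$: for every lattice term $t$, assignment $\bar b$, and atom $p\le t(\bar b)$, there are $b_i'\le b_i$ with $p\le t(\bar b')$ and $\sum_i\mathrm{height}(b_i')\le o(t(\bar x))$. Because height is submodular in a modular lattice, i.e.\ $\mathrm{height}(x+y)\le\mathrm{height}(x)+\mathrm{height}(y)$, this yields $\mathrm{height}(u)=\mathrm{height}(\sum_i b_i')\le\sum_i\mathrm{height}(b_i')\le o(t(\bar x))$, which closes the reduction. For $t=x_i$ take $b_i'=p$ and the remaining $b_j'=0$. For $t=t_1\cap t_2$ one has $p\le t_1(\bar b)$ and $p\le t_2(\bar b)$; applying the claim to each factor gives $\bar b^{(1)},\bar b^{(2)}$, and setting $b_i':=b_i^{(1)}+b_i^{(2)}\le b_i$ keeps $t_1(\bar b'),t_2(\bar b')\ge p$ by monotonicity, so $p\le t(\bar b')$, while occurrence counts and heights add as required since $o(t)=o(t_1)+o(t_2)$.

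The one genuinely geometric step is the join case $t=t_1+t_2$, which needs a ``line lemma'': if $p$ is an atom and $p\le a+b$ in an atomic complemented modular lattice, then there exist $q_1\le a$ and $q_2\le b$, each an atom or $0$, with $p\le q_1+q_2$. Granting this, apply the claim to $(t_1,q_1)$ and $(t_2,q_2)$, combine by $b_i':=b_i^{(1)}+b_i^{(2)}$, and use monotonicity to get $t(\bar b')=t_1(\bar b')+t_2(\bar b')\ge q_1+q_2\ge p$, with heights adding to at most $o(t_1)+o(t_2)=o(t)$. To prove the line lemma I may assume $p\not\le a$ and $p\not\le b$; then $a\cap p=0$, so by modularity $a+p$ covers $a$. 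From $a+((a+p)\cap b)=(a+p)\cap(a+b)=a+p$ one sees $(a+p)\cap b\not\le a$; since $L$ is relatively complemented (being complemented modular) and atomic, $(a+p)\cap b$ contains an atom $q_2\not\le a$, whence $a+q_2=a+p\ge p$. Running the same covering-plus-relative-complement step symmetrically, now with $q_2$ in place of $b$, replaces $a$ by a single atom $q_1\le a$ with $q_1+q_2\ge p$. This manipulation is the main obstacle; the remaining bookkeeping (monotonicity, submodularity, and the coincidence of term evaluation in $[0,u]$) is routine. If the bounds $0,1$ are treated as constants, one interprets $1$ as the top $u$ of the section and $0$ as $0$; since $p\le u$ and all constant occurrences collapse to the single top, neither the construction nor the height bound is affected.
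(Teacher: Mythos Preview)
Your proof is correct and follows essentially the same route as the paper: pick an atom $p\le t(\bar b)$ with $p\not\le s(\bar b)$ and show by structural induction on $t$ that $\bar b$ can be shrunk to some $\bar b'\le\bar b$ with $p\le t(\bar b')$ and the height of $\sum_i b_i'$ bounded by $o(t)$, the join step being handled via exactly the ``line lemma'' you isolate. Your write-up is a bit more careful --- you track the stronger invariant $\sum_i h(b_i')\le o(t)$, combine by join in the $\cap$-case (the paper writes $c_i=c_i^1\cap c_i^2$ there, which appears to be a slip), and supply an explicit proof of the line lemma --- but the strategy is identical.
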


\begin{proof}
One shows by structural induction
 for any atom $p$ and term $t(\bar x)$:
if $p\leq t(\bar b)$,  then 
there is $\bar c$ such that $p\leq t(\bar c)$, $c_i\leq b_i$ for all $i$
and,   for     $u:=\sum_i c_i$, $[0,u]$ of height
at most the number of occurrences of variables in $t(\bar x)$.
Indeed, if $t=t_1+t_2$,  then there are atoms $p_j\leq t_j(\bar b)$
such that $p\leq p_1+p_2$. By inductive
hypothesis
there are $\bar c^j$ such that
 $p_j \leq t_j(\bar c^j)$, $c_i^j\leq b_i$, and 
 height of $\sum_i c_i^j$ at most the number of occurrences of variables in $t_j(\bar x)$.
Put $c_i:=c_i^1+c_i^2$. Similarly for  $t=t_1\cap t_2$, 
 given $p \leq t(\bar b)$, by inductive hypothesis there are
$\bar c^j \leq \bar b$ such that   
  $p\leq t_j(\bar c^j)$ and one has $p\leq t(\bar c)$
where $c_i= c^1_i \cap c^2_i$. 
Now, if $s(\bar b) <t(\bar b)$, choose an atom
$p\leq t(\bar b)$, $p\not\leq s(\bar b)$.
Then $p\leq t(\bar c)$ but  $p\not\leq s(\bar c)$.  
\end{proof}

For  $c$, $c$ prime or $0$,
let  $\mc{V}_c$ be the smallest lattice variety
containing all $\lt(V_F)$ over division rings of characteristic $p$.
Observe that for a division ring $F'$ embedded into $F$,
$\lt(V_F)$ is a sublattice of $\lt(V_{F'})$
if $V$ is considered a vector space over $F'$
and that $\lt(W_{F'})$ is embedded into $\lt(V_F)$ if $\dim W_{F'}=\dim V_F$. 
It follows from Fact~\ref{dr} that, for each $F$ of characteristic $c$,
  $\mc{V}_c$ is generated by  any class of $\lt(V_F)$
where $\dim V_F$ is finite and  unbounded.
For each $c$, the equational theory of  $\mc{V}_c$
is decidable \cite{hh,hc}.
An upper bound on the complexity is given by the following 
(though, it remains open to establish a lower bound, say $\mc{NP}$).

\begin{corollary}
  Given a field $F$ of characteristic $c$,
there is a  p-time reduction of ${\rm REF}_{\mc{V}_c}$
to {\rm FEAS}$_{\mathbb{Z},F}$.
In particular, {\rm REF}$_{\mc{V}_c}$ is in $\mc{NP}$
for any prime $c$.
\end{corollary}
\begin{proof}
By Fact \ref{dr}, if an equation fails  in some member of $\mc{V}_c$,
then it does so in  $\lt(V_{F})$
where $\dim V_{F}$ is  the number of occurrences of variables
in the equation. 
Now apply  Subsections \ref{sS} and \ref{SF}
for reduction to  {\rm FEAS}$_{\mathbb{Z},F}$.
\end{proof}

\section{Preliminaries: Part II}\lab{p2}

\subsection{Inverses in coordinate rings} \lab{inv}

Continuing with Subsections~\ref{dframe} and \ref{coo}
we consider the lattice $\lt(V_F)$ of subspaces 
of an $F$-vector space and a $3$-frame $\bar a$
such that $a_\bot=0$ and $a_\top=V$. 
We extend $\bar a$ 
adding the elements  $a_{23} =(a_2+a_3)\cap (a_{12}+a_{13})$ and
 $a_{ji}=a_{ij}$ for $i<j$. Observe that, by modularity, one has
\[(+)\quad (a_i +a_j)\cap (a_{ik}+a_{kj})= a_{ij} 
\mbox{ for } \{i,j,k\}=\{1,2,3\}.\]
Thus, one obtains a 
\emph{normalized frame of order} $3$ in the sense of von Neumann.
 Also,
we introduce corresponding variables
$z_{23}$ and  $z_{ji}$ and the \emph{perspectivity terms} 
 \[\pi_{ijk}^{\bar z}(x):=(x+z_{jk})\cap(z_i+z_k)\]
which define the isomorphism $u \mapsto \pi_{ijk}^{\bar a}(u)$
 of $[0,a_i+a_j]$ onto $[0,a_i+a_k]$; in particular $a_i \mapsto a_i$,
$a_j \mapsto a_k$, and $a_{ij} \mapsto a_{ik}$.

For given $i\neq j$ and endomorphism  $f$ of $a_1$
, define $\Gamma_{ij}^{\bar a}(f)=\{ v-f(v) \mid v \in a_i\}$,
 the (negative) \emph{graph} of $f$;
this
establishes a 1-1-correspondence between
  linear maps  $f:a_i\to a_j$ and subspaces $U$ such that 
$U\oplus a_j=a_i+a_j$. Invertible $f$ are characterized  by the additional
condition $U\oplus a_i=a_i+a_j$;
here, 
$\Gamma_{ji}^{\bar a}(f^{-1})=\Gamma_{ij}^{\bar a}(f)$.

 In particular, one has the linear
 isomorphisms
$\vep_{ij}^{\bar a}: a_i \to a_j$ 
 such that  $\Gamma_{ij}^{\bar a}(\vep^{\bar a}_{ij})=a_{ij}$.
From $a_{ji}=a_{ij}$ it follows that $\vep^{\bar a}_{ji}=
\vep^{\bar a}_{ij}$ (observe that $\vep^{\bar a}_{12}=\vep_{\bar a}$ and
$\vep^{\bar a}_{21}=\vep_{\bar a}^{-1}$). 
For pairwise distinct $i,j,k$ and linear maps
$f:a_i \to a_j$, $g:a_j\to a_k$ one has
\[(++) \quad \Gamma^{\bar a}_{ik}(g  f)=     
(\Gamma^{\bar a}_{ij}(f) + \Gamma^{\bar a}_{jk}(g))\cap (a_i +a_k). \]
In particular,  from (+) it follows that
$\vep^{\bar a}_{jk}  \vep^{\bar a}_{ij}= \vep^{\bar a}_{ik}$.
 Define the lattice term
\[ s(x,\bar z):=\pi_{231}^{\bar z}\pi_{312}^{\bar z}\pi^{\bar z}_{123}(x).\]
 
\begin{fact} \lab{inverse}
For $\lt(V_F)$ and $\bar a$ as above and
 $f \in \End(a_1) $
one has
\[ s(\omega_{\bar a}(f)), \bar a)= \{ f\vep^{\bar a}_{21}(w) -w\mid w \in a_2\}.
 \]  
In particular, $ s(\omega_{\bar a}(f), \bar a)= \omega_{\bar a}(f^{-1})$
if $f$ is invertible.
\end{fact}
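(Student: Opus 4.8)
The plan is to evaluate the three perspectivity maps one after another, tracking not the subspaces abstractly but the linear maps whose (negative) graphs they are. The starting observation is that
\[ \omega_{\bar a}(f)=\{v-\vep^{\bar a}_{12}f(v)\mid v\in a_1\}=\Gamma^{\bar a}_{12}(\vep^{\bar a}_{12}f), \]
which is immediate from the definition of $\omega_{\bar a}$ together with $\vep_{\bar a}=\vep^{\bar a}_{12}$. The core of the argument is a single computational lemma: evaluated at $\bar a$, each perspectivity term $\pi^{\bar a}_{ijk}$ on $[0,a_i+a_j]$ is the lattice isomorphism onto $[0,a_i+a_k]$ \emph{induced by the linear bijection} $\phi_{ijk}:=\id_{a_i}\oplus\vep^{\bar a}_{jk}\colon a_i+a_j\to a_i+a_k$, which fixes $a_i$ pointwise and restricts to $\vep^{\bar a}_{jk}$ on $a_j$.

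First I would prove this lemma by checking agreement on atoms. For $u\in a_i$ and $x\in a_j$, using $z_{jk}=a_{jk}=\Gamma^{\bar a}_{jk}(\vep^{\bar a}_{jk})$ and the independence of $a_1,a_2,a_3$, one computes
\[ \bigl((u+x)F+a_{jk}\bigr)\cap(a_i+a_k)=(u+\vep^{\bar a}_{jk}(x))F=\phi_{ijk}\bigl((u+x)F\bigr). \]
Since both $\pi^{\bar a}_{ijk}$ and the map induced by $\phi_{ijk}$ are isomorphisms of the atomistic intervals $[0,a_i+a_j]\to[0,a_i+a_k]$, and such an isomorphism is determined by its action on atoms, they coincide.

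With the lemma in hand the computation is just pushing a generating vector $v-\vep^{\bar a}_{12}f(v)$ ($v\in a_1$) of $\omega_{\bar a}(f)$ through the three linear maps, simplifying by the composition law $\vep^{\bar a}_{jk}\vep^{\bar a}_{ij}=\vep^{\bar a}_{ik}$ and the inversion law $\vep^{\bar a}_{ji}=(\vep^{\bar a}_{ij})^{-1}$. Applying $\phi_{123}=\id_{a_1}\oplus\vep^{\bar a}_{23}$ gives $v-\vep^{\bar a}_{23}\vep^{\bar a}_{12}f(v)=v-\vep^{\bar a}_{13}f(v)$; applying $\phi_{312}=\id_{a_3}\oplus\vep^{\bar a}_{12}$ gives $\vep^{\bar a}_{12}(v)-\vep^{\bar a}_{13}f(v)$; applying $\phi_{231}=\id_{a_2}\oplus\vep^{\bar a}_{31}$ and using $\vep^{\bar a}_{31}\vep^{\bar a}_{13}=\id_{a_1}$ gives $\vep^{\bar a}_{12}(v)-f(v)$. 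Hence $s(\omega_{\bar a}(f),\bar a)=\{\vep^{\bar a}_{12}(v)-f(v)\mid v\in a_1\}$, and reparametrizing by $w=\vep^{\bar a}_{12}(v)\in a_2$, i.e.\ $v=\vep^{\bar a}_{21}(w)$, this is exactly $\{f\vep^{\bar a}_{21}(w)-w\mid w\in a_2\}$, the asserted description.

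For the final assertion, when $f$ is invertible the $a_1$-component of $v\mapsto\vep^{\bar a}_{12}(v)-f(v)$ is an isomorphism, so I reparametrize by it: putting $v'=-f(v)$, equivalently $v=-f^{-1}(v')$, the vector $\vep^{\bar a}_{12}(v)-f(v)$ becomes $v'-\vep^{\bar a}_{12}f^{-1}(v')$, which ranges over $\omega_{\bar a}(f^{-1})$ as $v'$ runs through $a_1$. I expect the main obstacle to be the lemma: one must verify carefully that the lattice-theoretic perspectivity term genuinely realizes the linear map $\id_{a_i}\oplus\vep^{\bar a}_{jk}$, and in particular get the cancellation $\vep^{\bar a}_{31}\vep^{\bar a}_{13}=\id$ right, since it is precisely this cancellation that collapses the triple composite back to $f$ (respectively $f^{-1}$) rather than to some conjugate of it.
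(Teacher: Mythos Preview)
Your proof is correct and follows essentially the same approach as the paper: the paper observes $\omega_{\bar a}(f)=\Gamma^{\bar a}_{12}(\vep^{\bar a}_{12}f)$ and then applies the composition formula $(++)$ three times to obtain $\Gamma^{\bar a}_{21}(f\vep^{\bar a}_{21})$, which is exactly what your lemma about $\phi_{ijk}=\id_{a_i}\oplus\vep^{\bar a}_{jk}$ does in geometric language (your lemma is the statement that $(++)$ holds, phrased as the perspectivity being induced by a linear bijection). The reparametrizations for the invertible case are likewise the same, the paper using $w=\vep^{\bar a}_{12}f^{-1}(v)$ where you use $v'=-f(v)$.
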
 
\begin{proof}
Observe  that $\omega_{\bar a}(f)= \Gamma^{\bar a}_{12}(\vep^{\bar a}_{12} f)$. Applying (++) three times we get
$s(\omega_{\bar a}(f),\bar a))= \Gamma^{\bar a}_{21}(g)$
where 
$g=  (\vep^{\bar a}_{21}((\vep^{\bar a}_{12} f) \vep^{\bar a}_{23})) \vep^{\bar a}_{31}=f  \vep^{\bar a}_{21}$.
If $f$ is invertible, then $\vep^{\bar a}_{12}  f^{-1}$ 
is an isomorphism of $a_1$ onto $a_2$ and with $w=\vep^{\bar a}_{12} f^{-1}(v)$ one gets
$\{ f\vep^{\bar a}_{21}(w) -w\mid w \in a_2\}
=\omega_{\bar a}(f^{-1}) $. 
\end{proof}

\subsection{Modular ortholattices}
An \emph{ortholattice} is a lattice $L$
with bounds $0,1$ as constants in the signature as well as a map,
called \emph{orthocomplementation},
$a \mapsto a^\perp$ such that for all $a,b \in L$
\[ a\leq b \;\Leftrightarrow\;b^\perp \leq a^\perp, \quad
(a^\perp)^\perp =a,\quad a\oplus a^\perp =1.\]
We write $\sum_i a _i=a_1\oplus^\perp \ldots \ldots \oplus^\perp  a_n$ 
iff $a_i\leq a_j^\perp$ for
all $i\neq j$.
A  \emph{MOL} is a modular ortholattice.
Let $\mc{H}$ denote the class of  all
 finite dimensional real or complex Hilbert spaces and
$\mc{H}_{\mathbb{R}}$ the class of those  $H\in \mc{H}$ which are over $\mathbb{R}$. 
The lattice  of all linear subspaces of a given $H \in \mc{H}$ 
is a MOL $\lt(H)$ with $X^\perp$ the orthogonal complement of $X$
w.r.t. the  inner product.

In a modular ortholattice $L$,  every  section $[0,u]$
is again a MOL with $x \mapsto x^\perp \cap u$.
Moreover, $[0,u] \in {\sf HS}(L)$ via the homomorphism
$x \mapsto x \cap u$  defined on the sub-ortholattice
$[0,u] \cup [u^\perp,1]$ of $L$. 
For a subspace $U$ of $H$, the ortholattice so  obtained
 is  $\lt(U)$.  

\begin{fact}\lab{ol}
Within the class of MOLs
any conjunction of identities is equivalent to a single
identity of the form $t=0$.
\end{fact}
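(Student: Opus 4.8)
The plan is to reduce in two independent stages, the whole reduction resting on the fact that every MOL is orthomodular. First I would record the orthomodular law inside MOLs: whenever $q \leq p$ one has $p = q + (p \cap q^\perp)$. This is immediate from the modular law together with $q + q^\perp = 1$: taking $c = q \leq p$ in the modular law $a \cap (b + c) = (a \cap b) + c$ with $a = p$, $b = q^\perp$ gives
\[ p = p \cap (q + q^\perp) = (p \cap q^\perp) + q. \]
The consequence I need is the pointwise equivalence, valid for comparable elements $q \leq p$ in any MOL:
\[ p = q \iff p \cap q^\perp = 0. \]

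The key step is to convert a single identity $s = t$ into one of the form $u = 0$. I would set
\[ d(s,t) := (s + t) \cap (s \cap t)^\perp \]
and claim that in every MOL, under every assignment, $s = t$ holds iff $d(s,t) = 0$. Indeed, in any lattice $s = t$ is equivalent to $s + t = s \cap t$; writing $q := s \cap t \leq p := s + t$, this reads $p = q$, which by the displayed equivalence is exactly $p \cap q^\perp = 0$, i.e. $d(s,t) = 0$. It is worth stressing why the naive ``symmetric difference'' $(s \cap t^\perp) + (s^\perp \cap t)$ does \emph{not} serve here: in an orthomodular lattice $a \cap b^\perp = 0$ does not force $a \leq b$ for incomparable $a, b$, as the four-atom MOL on $a, a^\perp, b, b^\perp$ shows. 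Arranging the two elements $q \leq p$ to be comparable before applying orthomodularity is precisely what repairs this, and this is the one genuinely delicate point of the argument; everything surrounding it is routine.

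Finally I would combine the conjuncts. Given a conjunction $\bigwedge_{i=1}^{n}(s_i = t_i)$, each conjunct is equivalent within MOLs to $d(s_i,t_i) = 0$, so the whole conjunction is equivalent to $\bigwedge_i d(s_i,t_i) = 0$. Since in a bounded lattice a finite join vanishes iff every summand does (each summand lies below the join, and $0$ lies below everything), this is in turn equivalent to the single identity
\[ \sum_{i=1}^{n} d(s_i,t_i) = 0, \]
which is of the required form $t = 0$. Because the equivalence established in the second stage is pointwise, it immediately upgrades to equivalence of identities, so no further obstacle remains.
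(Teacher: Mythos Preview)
Your proof is correct and follows exactly the paper's approach: the paper also reduces $s=t$ to $(s+t)\cap(s\cap t)^\perp=0$ and then replaces the conjunction $\bigwedge_i t_i=0$ by $\sum_i t_i=0$. You simply supply the justifications (via orthomodularity of comparable pairs) that the paper leaves implicit, and your aside on why the naive symmetric difference fails is a welcome addition.
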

Indeed,
 any identity $s=t$ is equivalent to 
 $(s+t)\cap (s\cap t)^\perp=0$
and $\bigwedge_i t_i=0$ is equivalent to 
$\sum_i t_i=0$.
Thus, for a class 
$\mc{C}$ of MOLs, REF$_\mc{C}$
and SAT$_\mc{C}$ both amount to the following:
 Given a term $t(\bar x)$,
are  there non-trivial $L\in \mc{C}$ and an assignment $\bar a$ in $L$
such that $t(\bar a) \neq 0$ (respectively,
 $t(\bar a)=1$)? Similarly, for 
uREF$_\mc{C}$
and sSAT$_\mc{C}$.

\subsection{Dimension bounds in REF$_{\lh}$}\lab{dim}

\begin{lemma}\lab{ldim}
Given an unnested ortholattice term 
$T(\bar x)$ and  $H \in \hr$ such that
 $\lt(H) \models \exists \bar x.\,T(\bar x)\neq 0$,
there is   $H' \in \hr$, 
$\dim H'= o(T(\bar x))$ such that 
 $\lt(H') \models \exists \bar x.\,T(\bar x)\neq 0$.
Analogously, in the complex case.
\end{lemma}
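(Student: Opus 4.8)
The plan is to pass to an equivalent \emph{nested} term, put it into negation normal form so that $^\perp$ is applied only to variables, and then track a single nonzero witness vector backwards through the (tree-shaped) term, collecting one vector per leaf; the span of these vectors will be the desired low-dimensional space.

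First I would invoke Fact~\ref{ured} to replace $T(\bar x)$ by a nested term $t(\bar x)$ computing the same function with $o(t(\bar x))=o(T(\bar x))$, so it suffices to produce $H'$ with $\dim H'=o(t(\bar x))$ and an assignment making $t\neq 0$. Using the De~Morgan laws $(s+r)^\perp=s^\perp\cap r^\perp$, $(s\cap r)^\perp=s^\perp+r^\perp$, $(s^\perp)^\perp=s$, which hold in every ortholattice and do not change the number of variable occurrences, I rewrite $t$ so that $^\perp$ is applied only to variables and to $0,1$. Absorbing the constants via $x+0=x$, $x\cap 1=x$, $x+1=1$, $x\cap 0=0$, $0^\perp=1$, $1^\perp=0$ yields a term $t'$ with $o(t')\le o(t)$ computing the same function; the hypothesis $t(\bar a)\neq 0$ rules out $t'=0$, and the case $t'=1$ is trivial (discussed at the end). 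Otherwise $t'$ is constant-free in negation normal form, i.e. its leaves are exactly the occurrences $x_i$ and $x_i^\perp$, of which there are $o(t')\le o(T)$.

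Fix an assignment $\bar a$ in $\lt(H)$ and $0\neq v\in t'(\bar a)$. I propagate $v$ downward through the term tree: at a meet $y=y_1\cap y_2$ a demand $u$ at $y$ becomes the same demand $u$ at each child; at a join $y=y_1+y_2$ I fix a splitting $u=u_1+u_2$ with $u_j$ in the value of $y_j$ and demand $u_j$ at $y_j$. Since each node has one parent, every leaf receives exactly one demanded vector, lying in $a_i$ (leaf $x_i$) or in $a_i^\perp$ (leaf $x_i^\perp$). Let $W$ be the span of these leaf vectors; a bottom-up induction shows $v\in W$, so $W\neq 0$, while $\dim W\le o(t')\le o(T)$. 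Now choose $H'\in\hr$ with $\dim H'=o(T)$ and $W$ isometrically embedded in $H'$, and set $a_i'':=a_i\cap W$, viewed in $\lt(H')$. I claim $v\in t'(\bar a'')$, by induction up the tree, the point being that each leaf value in $\lt(H')$ \emph{contains} its demand: for $x_i$ this is $u\in a_i\cap W=a_i''$, and for $x_i^\perp$ it is the inclusion $a_i^\perp\cap W\subseteq(a_i\cap W)^{\perp}$ in $H'$, valid because $u\perp a_i$ forces $u\perp(a_i\cap W)$. Meets and joins then propagate membership upward exactly as the demands were defined, giving $t'(\bar a'')\neq 0$; since $t$ and $t'$ define the same function, $t(\bar a'')\neq 0$ in $\lt(H')$.

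The main obstacle is the orthocomplement: the naive restriction $X\mapsto X\cap W$ is not an ortholattice homomorphism, since it does not commute with $^\perp$. Two devices get around this. Negation normal form confines $^\perp$ to variable leaves, where the one-sided inclusion $a_i^\perp\cap W\subseteq(a_i\cap W)^{\perp}$ is all that is needed; and I track only \emph{membership} of a single witness rather than an equality of subspaces, so enlarging the ambient space from $W$ to $H'$—which only enlarges the values of $^\perp$-leaves—preserves the witness. (The only genuinely excluded situation is a constant tautology $t=1$ with no variable occurrences, where $o(T)=0$ forces $\dim H'=0$; this is vacuous in the intended applications, and when $t'=1$ with $o(T)\ge 1$ any $H'$ of dimension $o(T)$ works since $t(\bar a'')=H'\neq 0$.) The complex case is identical, replacing Euclidean by Hermitian orthogonality throughout.
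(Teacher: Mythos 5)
Your proof is correct, but it follows a genuinely different route from the paper's. The paper disposes of the lemma in two lines: the version for ordinary terms is quoted from \cite[Lemma 2.2]{neu} together with the observation that $\lt(H)\in{\sf HS}(\lt(H'))$ for any extension $H'$ of $H$, and Fact~\ref{ured} then transfers it to unnested terms. You invoke Fact~\ref{ured} in exactly the same way (and with the same reliance on its second, unproved assertion that every unnested term arises from a term with the same $o$), but you then prove the term version from scratch: negation normal form confines $^\perp$ to the leaves; a nonzero witness vector is propagated down the term tree (copied at meets, split at joins); $W$ is the span of the resulting leaf vectors, so $\dim W\le o(T)$ while the root witness lies in $W$; and the restricted assignment $a_i\mapsto a_i\cap W$ still accepts the witness because of the one-sided inclusion $a_i^\perp\cap W\subseteq(a_i\cap W)^\perp$, which is all that is needed at $^\perp$-leaves even though $X\mapsto X\cap W$ is not an ortholattice homomorphism. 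This is in effect a reconstruction, in the Hilbert-space setting, of the argument behind the cited lemma, and it closely parallels the paper's own Fact~\ref{dr} (the atom-tracking induction for complemented modular lattices), extended to cope with orthocomplementation. What the paper's route buys is brevity and reuse of a published result; what yours buys is a self-contained argument that exposes the mechanism, in particular why a dimension bound survives an operation ($^\perp$) that restriction does not preserve. One further point in your favor: you explicitly flag the degenerate case $o(T)=0$ (a constant term such as $1$), where the statement as literally formulated fails, since $\dim H'=0$ forces the trivial lattice in which $0=1$; the paper's citation-proof passes over this silently.
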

\begin{proof}
The analogue for terms follows from \cite[Lemma 2.2]{neu}
and the fact $\lt(H)\in {\sf HS}(\lt(H'))$
for any extension  $H'$ of $H$. In view of Fact~\ref{ured}
this proves the Lemma.
\end{proof}

\subsection{Orthogonal frames}\lab{oframe}
Recall Subsection~\ref{dframe}. 
A $d$-frame in a MOL
is \emph{orthogonal} if $a_\bot =0$ and
$\sum_i a_i=\bigoplus_i^\perp a_i$. 
For a Hilbert space $H=\bigoplus^\perp_i H_i$
with isomorphisms  $\vep_j:H_1 \to H_j$ one obtains an 
orthogonal $d$-frame in $\lt(H)$ with $a_i=H_i$
and $a_{1j}=\{ x -\vep_j x\mid x \in H_1\}$;
and all orthogonal $d$-frames in $\lt(H)$  with $a_\top=H$, $H \in \mc{H}$,
arise this way.
Observe that $\bar a$ is an orthogonal $d$-frame if and only  it
is so in the section $[0,a_\top]$. 
Fact~\ref{fred} reads now as follows.
\begin{fact}\lab{red}
For any orthogonal $d$-frame $\bar a$
in a modular ortholattice $L$ and $b_1 \leq a_1$,
the $b_i:=(b_1+a_{1j})\cap a_j$, $b_{1j}:=(b_1+b_j)\cap a_{1j}$,
 form an orthogonal $d$-frame $\bar b$
with  $b_\bot:=a_\bot=0$ and $b_\top:=\sum_ib_i$
 and such that 
$\bar b=\bar a$ if $b_1=a_1$.
Moreover, if $L=\lt(H)$ and if $b_1$ is invariant for  a given $f\in \End(a_1)$,
then $\omega_{\bar b} f= (b_1+b_2)\cap \omega_{\bar a} f$.
\end{fact}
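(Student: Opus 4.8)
The plan is to split the statement into its two assertions---that $\bar b$ is an orthogonal $d$-frame, and the formula for $\omega_{\bar b} f$---and to treat the first as essentially a corollary of Fact~\ref{fred}. Applying that fact to $\bar a$ (which is in particular an ordinary $d$-frame with $a_\bot = 0 \le b_1 \le a_1$) yields at once that the $b_i$, $b_{1i}$ form a $d$-frame $\bar b$ with $b_\bot = a_\bot = 0$, $b_\top = \sum_i b_i$, and $\bar b = \bar a$ whenever $b_1 = a_1$. It then remains only to check orthogonality, i.e.\ $b_i \le b_j^\perp$ for $i \ne j$. Here I would observe that $b_i \le a_i$ for every $i$ (immediate from $b_1 \le a_1$ and $b_i = (b_1 + a_{1i}) \cap a_i \le a_i$), combine this with the orthogonality of $\bar a$, which gives $a_i \le a_j^\perp$, and apply order-reversal of $\perp$ to $b_j \le a_j$ to get $a_j^\perp \le b_j^\perp$. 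Chaining, $b_i \le a_i \le a_j^\perp \le b_j^\perp$, so $\bar b$ is orthogonal.

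For the second assertion I would work in $L = \lt(H)$ and first pin down the isomorphism $\vep_{\bar b} : b_1 \to b_2$, the claim being that it is simply the restriction $\vep_{\bar a}|_{b_1}$. To see this I compute within the section $[0,\,a_1 + a_2] = a_1 \oplus a_2$ (an orthogonal decomposition, since $a_1 \perp a_2$): writing a generic element of $b_1 + a_{12}$ as $(w + v) - \vep_{\bar a} v$ with $w \in b_1$, $v \in a_1$, and forcing its $a_1$-component $w + v$ to vanish shows $b_2 = (b_1 + a_{12}) \cap a_2 = \vep_{\bar a}(b_1)$; an analogous component comparison shows $b_{12} = (b_1 + b_2) \cap a_{12} = \{\, w - \vep_{\bar a} w \mid w \in b_1 \,\}$, which is exactly the graph defining $\vep_{\bar b}$. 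Hence $\vep_{\bar b} = \vep_{\bar a}|_{b_1}$.

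With this in hand, and using that $b_1$ is $f$-invariant so that $f|_{b_1} \in \End(b_1)$ is defined, I unwind the formulas: $\omega_{\bar b} f = \{\, w - \vep_{\bar b} f(w) \mid w \in b_1 \,\} = \{\, w - \vep_{\bar a} f(w) \mid w \in b_1 \,\}$, whereas $\omega_{\bar a} f = \{\, v - \vep_{\bar a} f(v) \mid v \in a_1 \,\}$. It then suffices to intersect the latter with $b_1 + b_2 = \{\, w_1 + \vep_{\bar a} w_2 \mid w_1, w_2 \in b_1 \,\}$: an element $v - \vep_{\bar a} f(v)$ of $\omega_{\bar a} f$ lies in $b_1 + b_2$ precisely when its $a_1$-part $v$ lies in $b_1$ and its $a_2$-part $-\vep_{\bar a} f(v)$ lies in $\vep_{\bar a}(b_1)$, i.e.\ $f(v) \in b_1$. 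By $f$-invariance the second condition is implied by the first, so the intersection is exactly $\{\, v - \vep_{\bar a} f(v) \mid v \in b_1 \,\} = \omega_{\bar b} f$. I expect the only genuine work to be the component bookkeeping identifying $\vep_{\bar b}$ with $\vep_{\bar a}|_{b_1}$; once that is settled, both the membership analysis for the intersection and the orthogonality check are routine, the $f$-invariance hypothesis being precisely what makes the $a_2$-component condition automatic.
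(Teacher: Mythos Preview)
Your proposal is correct. The paper does not actually prove this fact---it introduces it with the phrase ``Fact~\ref{fred} reads now as follows'' and leaves the verification implicit---so your argument is exactly the natural elaboration: invoke Fact~\ref{fred} for the underlying $d$-frame, check orthogonality via $b_i \le a_i \le a_j^\perp \le b_j^\perp$, and do the direct-sum component analysis in $a_1 \oplus a_2$ to identify $\vep_{\bar b} = \vep_{\bar a}|_{b_1}$ and compute $(b_1+b_2)\cap \omega_{\bar a} f$.
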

Existence of retractive terms for orthogonal $d$-frames
within the variety of MOLs 
is due to Mayet and  Roddy 
\cite{MR}. More easily it 
 is seen as follows. We will need only the case $d=3$
and $L=\lt(H)$.
 Given
$\bar a$ in the modular ortholattice $L$, let
 $u:=\bigcap_{i\neq j}(a_i+a_j)$ 
and $a_i^1:=u+a_i$. Then $a_1^1,a_2^1,a_3^1$ is independent 
in $[u,1]$.
 Thus, ${a_1^1}^\perp,{a_2^1}^\perp,{a_3^1}^\perp$
is dually independent in $[0,u^\perp]$ and
defining $a_i^2:={a_j^1}^\perp \cap {a_k^1}^\perp$,
$\{i,j,k\}=\{1,2,3\}$, one has $a_\top^2:=u^\perp= a_1^2\oplus^\perp
a_2^2\oplus^\perp a_3^2$.
 Put $a_{12}^2:= a_{12}\cap (a_1^2+a_2^2)$. 
For 
 $\{i,j\}=\{1,2\}$ put $c_i=a_i\cap (a_j+a_{12})$ and
$d_i=a_i \cap a_{12}$ to obtain the sublattice
in  Figure 1.
Now, with  $a^3_i=c_i\cap d_i^\perp$ for $i=1,2$ 
one has  $a_1^3+a_2^3$  a complement of $d_1+d_2$ in $[0,c_1+c_2]$
and with $a_{12}^3=(a_1^3+a_2^3)\cap a_{12}$ 
it follows that $a_i^3 \oplus a_{12}^3=a_1^3+a_2^3$.
Put  $a^3_\top:= a_1^3\oplus^\perp
a_2^3\oplus^\perp a_3^3$. Similarly, one obtains $\bar a^4$ such that
$a^4_\top:= a_1^4\oplus^\perp
a_2^4\oplus^\perp a_3^4$ and $a^4_{13}\oplus a^4_j= a^4_1+a^4_3$
for $j=1,3$. Finally, put $a^5_j:=a^4_j$ for $j=1,3$,
$a^5_{13}:=a^4_{13}$, $a^5_2:= (a^5_1 +a^4_{12})\cap a^4_2$,
and $a^5_{12}:= a^4_{12} \cap (a^5_1+a^5_2)$
to obtain an orthogonal $3$-frame ${\bar a}^5$.

\begin{figure} 
\setlength{\unitlength}{8mm}
\caption{}   \label{pq}
\begin{picture}(8.5,7)

\put(0,3){\circle*{0.2}} 
\put(-.5,2.7){$c_1$} 
\put(6,3){\circle*{0.2}}  
\put(6.2,2.7){$c_2$}  
\put(3,3){\line(0,1){3}}
\put(3,4.5){\circle*{0.2}}  
\put(3.3,4.5){$a_{12}^2$}
\put(1.5,1.5){\circle*{0.2}}  
\put(0.8,1.3){$d_1$}  
\put(3,3){\circle*{0.2}}
\put(4.5,1.5){\circle*{0.2}}  
\put(5,1.3){$d_2$}  
\put(3,0){\circle*{0.2}}
\put(2.5,-.4){$0$}
\put(1.5,4.5){\circle*{0.2}}
\put(4.5,4.5){\circle*{0.2}}
\put(3,6){\circle*{0.2}}
\put(0,3){\line(1,1){3}}
\put(0,3){\line(1,-1){3}} 
\put(1.5,1.5){\line(1,1){3}} 
\put(3,0){\line(1,1){3}} 
\put(1.5,4.5){\line(1,-1){3}}
\put(3,6){\line(1,-1){3}}

\end{picture}
\end{figure}

\subsection{Coordinate ring}
Consider $d \geq 3$ and  an orthogonal $d$-frame $\bar a$ in $\lt(H)$,
$H \in \mc{H}$.  Recall the coordinate ring $R(\bar a)$
from Subsection~\ref{coo}.
In the ortholattice setting,  elements of the coordinate ring
can be forced via the following where
$\#(x,y,z)$ is  a term  defining the relative orthocomplement
of $z$ in the interval $[x,y]$,
\[ \#(x,y,z):= x+ y\cap z^\perp.\]
Observe that, by modularity, $\#(x,y,z)= (z^\perp+x)\cap y$ if  $x \leq y$.
Now, define 
\[ \#"(x,y,z):=\#(z\cap (x\cap z)^\perp, y, x+ z) 
\]

\begin{lemma}\lab{var}
 In a MOL, if $a,c \leq b$, then $\#"(a,b,c)\oplus a=b$. Moreover, if $b=c\oplus a$
then $\#"(a,b,c)=c$.
\end{lemma}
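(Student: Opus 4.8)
The plan is to unfold the two layers of the definition and reduce everything to the modular law together with the orthocomplement identities $u+u^\perp=1$ and $u\cap u^\perp=0$. Writing $w:=c\cap(a\cap c)^\perp$, $s:=a+c$ and $v:=b\cap s^\perp$, the definitions of $\#"$ and $\#$ give at once $\#"(a,b,c)=\#(w,b,s)=w+v$. Since $a,c\leq b$ we have $s=a+c\leq b$, and also $w\leq c\leq b$ and $a\leq s$, so all the inclusions needed to invoke modularity are in place. I would then prove the two halves $\#"(a,b,c)+a=b$ and $\#"(a,b,c)\cap a=0$ separately.

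For the join, and to control $w$ along the way, I would first record two facts about $w$. Because $a\cap c\leq c$, modularity yields $w+(a\cap c)=c\cap\bigl((a\cap c)^\perp+(a\cap c)\bigr)=c\cap 1=c$; hence $w+a\geq c+a=s$, and since $w\leq c$ also $w+a=s$. Moreover $w\cap a\leq(a\cap c)^\perp\cap(a\cap c)=0$, so $w\cap a=0$. The join is then immediate: $(w+v)+a=(w+a)+v=s+(b\cap s^\perp)$, and as $s\leq b$ one further application of modularity gives $s+(b\cap s^\perp)=b\cap(s^\perp+s)=b\cap 1=b$.

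The meet is the only delicate point, and I expect it to be the main obstacle: since $v$ need not lie below $s$, one cannot intersect with $a$ termwise, and must instead separate the $s$-part from the $s^\perp$-part. From $v\leq s^\perp$ we get $v\cap s=0$, and modularity (using $w\leq s$) gives $(w+v)\cap s=w+(v\cap s)=w$. As $a\leq s$, it follows that $(w+v)\cap a=\bigl((w+v)\cap s\bigr)\cap a=w\cap a=0$, which together with the join yields $\#"(a,b,c)\oplus a=b$. The second assertion is then the degenerate case: if $b=c\oplus a$ then $a\cap c=0$, so $(a\cap c)^\perp=1$ and $w=c$, while $s=a+c=b$ forces $v=b\cap b^\perp=0$; hence $\#"(a,b,c)=w+v=c$. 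The only thing to watch throughout is to keep the roles of $s$ and $s^\perp$ apart and to apply the modular law only with the correct element sitting below the one being intersected.
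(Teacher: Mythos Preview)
Your proof is correct and is essentially the paper's argument written out in full. The paper uses the same $w$ (there denoted $c'$) and the same $c'':=\#"(a,b,c)=w+v$, observes that $c'\oplus a=a+c$ exactly via your computation $w+(a\cap c)=c$ and $w\cap a=0$, then notes that $c''$ is the relative orthocomplement of $a+c$ in $[c',b]$ (your join and meet computations with $s$ and $v$), and concludes that $c''$ is a complement of $a$ in $[0,b]$ by the same two modular-law steps you spell out.
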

\begin{proof}  See Figure 2. 
Let $c'=c\cap  (a \cap c)^\perp$ and $c"= \#(c',b, a+c)$. 
From $c=c'\oplus a \cap c$  follows $a+c=c'\oplus a$.
Now, $c"$ is a complement of $a+c$ in $[c',b]$, whence of
$a$ in $[0,b]$. Clearly, if $b=c\oplus a$, then

$c=c'=c"$. 
\end{proof}

\begin{figure} 
\setlength{\unitlength}{8mm}
\caption{}   \label{pq2}
\begin{picture}(8.5,7)

\put(0,3){\circle*{0.2}} 
\put(-.7,2.7){$c"$} 
\put(6,3){\circle*{0.2}}  
\put(6.2,2.7){$a$}  
\put(1.5,1.5){\circle*{0.2}}  
\put(0.8,1.3){$c'$}  
\put(3,3){\circle*{0.2}}
\put(4.5,1.5){\circle*{0.2}}  
\put(5,1.3){$a\cap c$}  
\put(3,0){\circle*{0.2}}
\put(2.5,-.4){$0$}
\put(1.5,4.5){\circle*{0.2}}
\put(4.5,4.5){\circle*{0.2}}
\put(5,4.4){$a+c=a+c'$} 
\put(3,6){\circle*{0.2}}
\put(0,3){\line(1,1){3}}
\put(0,3){\line(1,-1){3}} 
\put(1.5,1.5){\line(1,1){3}} 
\put(3,0){\line(1,1){3}} 
\put(1.5,4.5){\line(1,-1){3}}
\put(3,6){\line(1,-1){3}}
\put(3.3,6){$b$}
\put(3.3,2.9){$c$}

\end{picture}
\end{figure}

\subsection{Orthonormal frames}\lab{onframe}
Recall that for any subspaces $U,W$ of $H\in \mc{H}$
and any linear map $f:U\to W$ there
is a unique linear map $g:W\to U$
such that $\langle x \mid g(y)\rangle =\langle f(x)\mid y \rangle$
for all $x \in U$, $y \in W$. $g$ is called  the \emph{adjoint} of $f$
and in case $U=W=H$ we write $g=f^*$.
An isomorphism $f:U \to W$ is an isometry if and only if
the inverse is the adjoint.
If $U \subseteq W^\perp$ then $g$ is the adjoint of $f$  
if  and only if  $\{v+fv\mid v \in a_1\}$ is orthogonal to 
$\{w -gw\mid w \in a_2\}$.

\begin{lemma}\lab{isometry}
Given an orthogonal $3$-frame $\bar a$ in $\lt(H)$, $H\in \mc{H}$
one has $\vep_{ \bar a}$ an isometry if and only if
\[(*)\quad   a_{12}^\perp \cap (a_1+a_2)= a_1\ominus_{\bar a}a_{12}.  \]
\end{lemma}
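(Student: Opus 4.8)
The plan is to identify both sides of $(*)$ as graphs $\Gamma_{12}^{\bar a}(\cdot)$ of endomorphisms of $a_1$ and then read off the isometry condition by comparing the two endomorphisms. First I would rewrite the right-hand side. Since $a_1$ is the zero and $a_{12}$ the unit of the coordinate ring $R(\bar a)$, the element $a_1\ominus_{\bar a}a_{12}$ is $-1$ in $R(\bar a)$, that is $\omega_{\bar a}(-\mathrm{id})$; by the defining formula for $\omega_{\bar a}$ in Subsection~\ref{coo} this is
\[ a_1\ominus_{\bar a}a_{12}=\{v+\vep_{\bar a}v\mid v\in a_1\}=\Gamma_{12}^{\bar a}(-\vep_{\bar a}). \]

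Next I would compute the left-hand side. Writing $a_{12}=\Gamma_{12}^{\bar a}(\vep_{\bar a})=\{v+(-\vep_{\bar a})v\mid v\in a_1\}$ and applying the adjoint criterion from Subsection~\ref{onframe} (with $f=-\vep_{\bar a}$, so that the relevant adjoint is $f^*=-\vep_{\bar a}^*$, where $\vep_{\bar a}^*\colon a_2\to a_1$ denotes the adjoint of $\vep_{\bar a}$), the subspace $\Gamma_{21}^{\bar a}(-\vep_{\bar a}^*)=\{w+\vep_{\bar a}^*w\mid w\in a_2\}$ is orthogonal to $a_{12}$ and is contained in $a_1+a_2$. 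Since $H$ is finite dimensional and $\vep_{\bar a}$ is an isomorphism, $\dim a_1=\dim a_2$, so
\[ \dim\bigl(a_{12}^\perp\cap(a_1+a_2)\bigr)=\dim(a_1+a_2)-\dim a_{12}=\dim a_2=\dim\Gamma_{21}^{\bar a}(-\vep_{\bar a}^*), \]
and hence the orthogonality upgrades to the equality $a_{12}^\perp\cap(a_1+a_2)=\Gamma_{21}^{\bar a}(-\vep_{\bar a}^*)$.

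Finally I would bring both sides into the same graph form over $a_1$. Using $\Gamma_{ji}^{\bar a}(g^{-1})=\Gamma_{ij}^{\bar a}(g)$ for invertible $g$ (recorded in Subsection~\ref{inv}), with $g=-\vep_{\bar a}^*$ one gets $\Gamma_{21}^{\bar a}(-\vep_{\bar a}^*)=\Gamma_{12}^{\bar a}\bigl(-(\vep_{\bar a}^*)^{-1}\bigr)$. As $\Gamma_{12}^{\bar a}$ is a bijection between linear maps $a_1\to a_2$ and their graphs, condition $(*)$ — namely $\Gamma_{12}^{\bar a}(-(\vep_{\bar a}^*)^{-1})=\Gamma_{12}^{\bar a}(-\vep_{\bar a})$ — holds if and only if $(\vep_{\bar a}^*)^{-1}=\vep_{\bar a}$, equivalently $\vep_{\bar a}^{-1}=\vep_{\bar a}^*$, which by the characterization in Subsection~\ref{onframe} is exactly the statement that $\vep_{\bar a}$ is an isometry. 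The one step that is not purely formal is the passage from orthogonality to equality of subspaces in the middle paragraph; this is where finite-dimensionality of $H$ (hence $\dim a_1=\dim a_2<\infty$) is essential. The remaining work — the sign bookkeeping in the adjoint criterion and in $\Gamma_{ji}(g^{-1})=\Gamma_{ij}(g)$ — is routine, and I expect no genuine obstacle beyond keeping these signs straight.
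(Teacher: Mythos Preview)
Your argument is correct and follows essentially the same approach as the paper: identify the right-hand side as $\omega_{\bar a}(-\id)=\{v+\vep_{\bar a}v\}$, use a dimension count to reduce $(*)$ to an orthogonality statement between this subspace and $a_{12}$, and unwind that orthogonality into $\vep_{\bar a}^{-1}=\vep_{\bar a}^*$. The only cosmetic difference is that you package the orthogonality step via the graph/adjoint formalism and the identity $\Gamma_{ji}(g^{-1})=\Gamma_{ij}(g)$, whereas the paper expands the inner product $\langle v+\vep_{\bar a}v,\, w-\vep_{\bar a}^{-1}w\rangle$ directly.
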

\begin{proof}
Observe that $a_1\ominus_{\bar a}a_{12}= \omega_{\bar a}(-\id_{a_1})$.
Thus, (*) holds if and only if $\omega_{\bar a}(-\id_{a_1}) \leq
a_{12}^\perp$ if and only if $\langle v+\vep_{\bar a}(v) \mid
w -\vep_{\bar a}^{-1}(w) \rangle =0$ 
for all $v \in a_1$ and $w \in a_2$ 
if and only if 
 $\langle v \mid
\vep_{\bar a}^{-1}(w) \rangle =
\langle \vep_{\bar a}(v) \mid
w  \rangle $ for all $v \in a_1$ and $w \in a_2$
if and only if $\vep_{\bar a}^{-1}$ 
and $\vep_{\bar a}$ are  adjoints of each other.
\end{proof}
An orthogonal $3$-frame
 $\bar a$ is called an  \emph{ON-$3$-frame} if and only if (*) holds. 
In particular, 
any
$H_1\in \mc{H}$ occurs as $a_1$ for some ON-$3$-frame
$\bar a$ in  some $H$, $\dim H= 3 \dim H_1$.
Also, the automorphism group of $\lt(H)$ acts transitively 
of the set of ON-$3$-frames.

In order to construct retractive terms for passing
from orthogonal $3$-frames to ON-$3$-frames we 
use Fact~\ref{red}. Here,
 we have $f=-{\sf id}$ and put
$b_1:= (a_{12}^\perp \cap \omega_{\bar a} f +a_2)\cap a_1$.
Summarizing subsection~\ref{oframe} and \ref{onframe}  and Lemma~\ref{var},
 we have retractive terms for passing
from $\bar a$ and $\bar r$ 
 with no relations to ON-$3$-frames
 and   elements of the coordinate ring. More precisely,
\begin{lemma}\lab{retr} 
Given variables $\bar z=(z_\bot,z_\top,z_1,z_2.z_3,z_{12},z_{13})$
and $x_i$ there are ortholattice terms
$\bar a(\bar z)=
(a_\bot,a_\top,a_1,a_2,a_3,a_{12},a_{13})(\bar z)$ 
and  $r_i(x_i,\bar z)$,
such that for any $H \in \mc{H}$
and substitution $\bar z \to \bar u$  and $x_i \to v_i$
in $\lt(H)$, 
one has $\bar a(\bar u)$ an ON-$3$-frame and $r_i(v_i,\bar u)
\in R(\bar a)$;
moreover, if $\bar u$ is an ON-$3$-frame and $v_i\in R(\bar u)$
 then $\bar a(\bar u)= \bar u$ and $r_i(v_i)=v_i$.
\end{lemma}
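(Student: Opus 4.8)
The plan is to realize $\bar a(\bar z)$ and $r_i(x_i,\bar z)$ as a composition of three successive retractions, invoking the composition rule for retractive terms: substituting the terms of one stage into those of the next preserves both the ``produces target type'' and the ``fixes target type'' properties. The three stages are (i) coerce an arbitrary assignment $\bar z$ into an orthogonal $3$-frame; (ii) coerce an orthogonal $3$-frame into an ON-$3$-frame; (iii) coerce an arbitrary $x_i$ into an element of the coordinate ring of the resulting frame. In stages (i) and (ii) the frame terms depend on $\bar z$ alone; in stage (iii) the term $r_i$ depends on $\bar z$ and only on $x_i$.

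For stage (i) I would use the orthogonalization of Subsection~\ref{oframe}, but carried out inside the section $[0,z_\top]$, i.e.\ with the relative orthocomplement $x \mapsto z_\top\cap x^\perp=\#(0,z_\top,x)$ in place of $\perp$ and with $a_\top(\bar z):=z_\top$, the input being first projected into $[0,z_\top]$ by meeting each $z_i,z_{1j}$ with $z_\top$. This yields terms that, for every $H\in\mc{H}$, send an arbitrary assignment to an orthogonal $3$-frame supported on $z_\top$ and that return an orthogonal $3$-frame unchanged. The retraction property is checked stepwise through $\bar a^1,\dots,\bar a^5$: for an orthogonal frame one has $u=\bigcap_{i\ne j}(a_i+a_j)=0$ inside $[0,a_\top]$, so $a_i^1=a_i$, the relative double-complement gives $a_i^2=a_i$ and $a_{12}^2=a_{12}$, and then $d_i=a_i\cap a_{12}=0$, $c_i=a_i$ force $a_i^3=a_i$, $a_{12}^3=a_{12}$, with the last two stages likewise the identity by the frame axioms.

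For stage (ii) I would apply Fact~\ref{red} with $f=-\id$ and $b_1:=(a_{12}^\perp\cap\omega_{\bar a}f+a_2)\cap a_1$, as in Subsection~\ref{onframe}. In $\lt(H)$ one computes $a_{12}^\perp\cap\omega_{\bar a}(-\id)=\{v+\vep_{\bar a}v: v\in K\}$ with $K=\{v\in a_1:\vep_{\bar a}^*\vep_{\bar a}v=v\}$, whence $b_1=K$ and $\vep_{\bar a}$ is an isometry on $K$; by Lemma~\ref{isometry} the frame $\bar b$ of Fact~\ref{red} is then an ON-$3$-frame. If $\bar a$ is already an ON-$3$-frame, condition $(*)$ gives $K=a_1$, so $b_1=a_1$ and $\bar b=\bar a$ by the final clause of Fact~\ref{red}. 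For stage (iii), writing $\bar a=\bar a(\bar z)$ for the ON-$3$-frame from (i)--(ii), so $a_\bot=0$, I set $c_i:=x_i\cap(a_1+a_2)$ and $r_i:=\#"(a_2,\,a_1+a_2,\,c_i)$; Lemma~\ref{var} gives $r_i\oplus a_2=a_1+a_2$, i.e.\ $r_i\in R(\bar a)$, and if $v_i\in R(\bar a)$ then $v_i\le a_1+a_2$ forces $c_i=v_i$ and $a_1+a_2=v_i\oplus a_2$, whence $r_i=v_i$ by the second clause of Lemma~\ref{var}.

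The main obstacle is making stage (i) a genuine retraction while preserving the frame's top. The orthogonalization of Subsection~\ref{oframe} as literally written replaces $a_\top$ by $u^\perp=1$ and so fixes only spanning frames; since an ON-$3$-frame in $\lt(H)$ need not span $H$, fixing every ON-$3$-frame requires relativizing the construction to $[0,z_\top]$ as above. Verifying idempotence of the relativized construction on orthogonal frames, together with the Hilbert-space identification $b_1=K$ in stage (ii), are the only substantive computations; once they are in place the composition of the three stages is purely formal and delivers the two asserted properties of $\bar a(\bar z)$ and $r_i(x_i,\bar z)$.
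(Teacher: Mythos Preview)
Your three-stage composition is exactly what the paper intends: the lemma is stated in the text as ``Summarizing subsection~\ref{oframe} and \ref{onframe} and Lemma~\ref{var}'', and your stages (i)--(iii) are precisely those ingredients, composed via the retraction rule. Your relativization of stage~(i) to the section $[0,z_\top]$ is a genuine and needed refinement over the paper's literal sketch, which sets $a_\top^2=u^\perp=1$ and hence would not fix a non-spanning ON-$3$-frame; one small correction is that the output top should be taken as $\sum_i a_i^5(\bar z)$ rather than $z_\top$ itself, since for arbitrary input the former may be strictly smaller (they coincide exactly when the input is already an orthogonal $3$-frame).
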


\subsection{Capturing adjoints in coordinate $*$-rings}\lab{adjoints}
The endomorphism ring $\End(H)$ of $H \in \mc{H}$
is also a  $*$-\emph{ring}: A ring with involution,
    namely the involution 
$f \mapsto f^*$, the adjoint of $f$.
Given subspaces $U$, $W$ with orthogonal projections
$\pi_U$, $\pi_W$  and a linear map $f:U\to W$,
one has $\pi_Uf^*|_W$ the adjoint of $f$ where $f^*$ refers to 
$f$ considered a map $f:U\to H$.
In particular, $\End(U)$ becomes a $*$-ring 
$\End^*(U)$ where $f \mapsto \pi_U f^*|_U$. 
In order to capture the involution in the coordinate ring
of an ON-$3$-frame we use the following ortholattice term
(where  $s(x,\bar z)$ is from Subsection~\ref{inv}):
\[ x^{\dagger_{\bar z}}:= s((z_1 \ominus_{\bar z} x)^\perp \cap (z_1+z_2)),\bar z). \]
\begin{lemma}\lab{adj}
Given $H \in \mc{H}$, an ON-$3$-frame
$\bar a$ in $\lt(H)$, $f \in \End(a_1)$, and $r=\omega_{\bar a}(f)$,
one has $r^{\dagger_{\bar a}} =\omega_{\bar a}(g)$ where
$g=\pi_U f^*|_U$ is the adjoint of $f$ in 
the Hilbert space $U=a_1$.  In particular, $\omega_{\bar a}$
is an isomorphism of  $*$-rings $\End^*(a_1)$ 
and $R(\bar a)$, the latter with involution $r \mapsto 
r^{\dagger_{\bar a}}$.
\end{lemma}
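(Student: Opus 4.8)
The plan is to evaluate the term $x^{\dagger_{\bar z}}$ on $r=\omega_{\bar a}(f)$ by peeling it apart from the inside out, tracking at each stage the corresponding linear map, and to show that the isometry property of an ON-$3$-frame is exactly what turns orthogonal complementation into the Hilbert-space adjoint. First I would unwind the innermost subterm $z_1\ominus_{\bar z}x$. Since the zero of $R(\bar a)$ is $a_1=z_1$, evaluation gives $a_1\ominus_{\bar a}r=\omega_{\bar a}(-f)$ (exactly as in the observation $\omega_{\bar a}(-\id_{a_1})=a_1\ominus_{\bar a}a_{12}$ of Subsection~\ref{inv}), which by the definition of $\omega_{\bar a}$ is the subspace $\{v+\vep_{\bar a}fv\mid v\in a_1\}$; writing $F:=\vep_{\bar a}f=\vep^{\bar a}_{12}f\colon a_1\to a_2$ this is the graph $\{v+Fv\mid v\in a_1\}$.

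The crucial step is the next one: computing the relative orthocomplement $(z_1\ominus_{\bar z}x)^\perp\cap(z_1+z_2)$, that is $\omega_{\bar a}(-f)^\perp\cap(a_1+a_2)$. Here I would invoke the adjoint--graph criterion recalled in Subsection~\ref{onframe}: since $a_1\subseteq a_2^\perp$, the subspace $\{w-Gw\mid w\in a_2\}$ is orthogonal to $\{v+Fv\mid v\in a_1\}$ precisely when $G$ is the adjoint of $F$. Both graphs have dimension $\dim a_1$ and meet in $0$ (a common element would be orthogonal to itself), so they span $a_1+a_2$ and $\Gamma^{\bar a}_{21}(G)=\{w-Gw\mid w\in a_2\}$ is the full relative orthocomplement. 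It remains to identify $G$. Using that $\bar a$ is an ON-$3$-frame, Lemma~\ref{isometry} gives that $\vep_{\bar a}$ is an isometry, so $\langle\vep^{\bar a}_{12}u\mid w\rangle=\langle u\mid\vep^{\bar a}_{21}w\rangle$; combined with the defining property $\langle fv\mid v'\rangle=\langle v\mid gv'\rangle$ of $g=\pi_{a_1}f^*|_{a_1}$, a short computation yields $\langle Fv\mid w\rangle=\langle v\mid g\vep^{\bar a}_{21}w\rangle$, i.e. $G=g\vep^{\bar a}_{21}$. Thus the argument of $s$ evaluates to $\Gamma^{\bar a}_{21}(g\vep^{\bar a}_{21})$.

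Finally I would apply $s$ to this graph. Exactly as in the proof of Fact~\ref{inverse}, $s$ is the composite of the three perspectivities $\pi^{\bar z}_{123},\pi^{\bar z}_{312},\pi^{\bar z}_{231}$, each of which I evaluate on graphs via $(++)$; the only difference is that the input is now a graph over $a_2$ rather than over $a_1$. Carrying out the three steps sends $\Gamma^{\bar a}_{21}(g\vep^{\bar a}_{21})$ through graphs over $a_3$ and then over $a_2$, and finally to $\Gamma^{\bar a}_{12}(\vep^{\bar a}_{12}\,g\,\vep^{\bar a}_{21}\,\vep^{\bar a}_{12})$; the cocycle identities $\vep^{\bar a}_{jk}\vep^{\bar a}_{ij}=\vep^{\bar a}_{ik}$ collapse $\vep^{\bar a}_{21}\vep^{\bar a}_{12}=\id_{a_1}$, leaving $\Gamma^{\bar a}_{12}(\vep^{\bar a}_{12}g)=\omega_{\bar a}(g)$. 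Hence $r^{\dagger_{\bar a}}=\omega_{\bar a}(g)$, which is the first assertion. The ``In particular'' then follows by transport of structure: $\omega_{\bar a}$ is already a ring isomorphism by Subsection~\ref{coo}, and since $f\mapsto g$ is the involution of the $*$-ring $\End^*(a_1)$, the operation $\dagger_{\bar a}$ it induces on $R(\bar a)$ is an involution making $\omega_{\bar a}$ a $*$-ring isomorphism.

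The step I expect to be the main obstacle is the orthocomplement computation, since it is the unique point where the orthogonality hypothesis (the ON-condition, via Lemma~\ref{isometry}) enters and converts a purely lattice-theoretic complement into the adjoint; getting the map $G$ and its orthogonality identity exactly right, with the correct placement of $\vep_{\bar a}$ versus $\vep^{\bar a}_{21}=\vep_{\bar a}^{-1}=\vep_{\bar a}^*$, is the delicate part. The perspectivity bookkeeping in the last paragraph is routine but must mirror Fact~\ref{inverse} with the roles of $a_1$ and $a_2$ interchanged.
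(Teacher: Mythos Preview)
Your proposal is correct and follows essentially the same route as the paper: identify $a_1\ominus_{\bar a}r=\omega_{\bar a}(-f)$, compute its relative orthocomplement in $a_1+a_2$ as the graph $\Gamma^{\bar a}_{21}(g\vep^{\bar a}_{21})$ using the adjoint--graph criterion together with the isometry property of $\vep_{\bar a}$, and then evaluate $s$ on this graph via the perspectivity identity $(++)$ and the cocycle relations. The only cosmetic difference is that the paper packages the second step as the chain of adjoint identities $\pi_Uf^*\vep_{\bar a}^{-1}=\pi_U(\vep_{\bar a}f)^*$ and invokes Fact~\ref{inverse} directly (implicitly using that $s$, being a composite of perspectivities, interchanges $\Gamma^{\bar a}_{12}$- and $\Gamma^{\bar a}_{21}$-graphs involutively), whereas you compute the inner products and rerun the three perspectivities explicitly with the roles of $a_1$ and $a_2$ swapped; one minor slip in your narration is that both intermediate graphs are over $a_3$ (namely $\Gamma^{\bar a}_{31}$ then $\Gamma^{\bar a}_{32}$), not ``over $a_3$ and then over $a_2$'', but your final expression $\Gamma^{\bar a}_{12}(\vep^{\bar a}_{12}\,g\,\vep^{\bar a}_{21}\,\vep^{\bar a}_{12})=\omega_{\bar a}(g)$ is correct.
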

\begin{proof}
Write $U=a_1$ and  consider $f,g \in \End(U)$.
Then, as observed above, $g$ is adjoint to $f$ 
if and only if $g=\pi_Uf^*|_U$. 
Now,  one has 
$\pi_Uf^*\vep_{\bar a}^{-1} =\pi_U f^*\pi_U\vep_{\bar a}^{-1}
= \pi_Uf^*\pi_U(\vep_{\bar a})^* =\pi_U (\vep_{\bar a}\pi_Uf)^* =\pi_U(\vep_{\bar a}f)^* $.
Thus, $g$ is adjoint to $f$ if and only if $g\vep_{\bar a}^{-1}$ is adjoint to
$\vep_{\bar a}f$ if and only if 
$\Gamma^{\bar a}_{12} (-\vep_{\bar a}f)$ is orthogonal to
$\Gamma^{\bar a}_{21}(g \vep_{\bar a}^{-1})$,
 that is, $\Gamma^{\bar a}_{21}(g \vep_{\bar a}^{-1})=
\Gamma^{\bar a}_{12} (-\vep_{\bar a}f)^\perp \cap(a_1+a_2)$.
The latter is
equal to  $r':=(a_1\ominus_{\bar a} r)^\perp\cap (a_1+a_2)$.
In view of Fact~\ref{inverse}, $g$ is adjoint to $f$ if
and only if 
 $\omega_{\bar a}(g)= s(r',\bar a)= \omega_{\bar a}(g)$.
\end{proof}
In the sequel, we will consider adjoints only 
within the space $a_1$ where $\bar a$ is an ON-$3$-frame in some $\lt(H)$,
$H\in \mc{H}$, and use $f^*$ to denote the adjoint of $f \in \End(a_1)$  
within  $a_1$.

\section{Complexity of the equational theory of $\mc{L}$}\lab{hilb}
Recall that $\mc{L}$
is the class of subspace ortholattices $\lt(H)$,
$H \in \mc{H}$, where $\mc{H}$ is the class of  all finite dimensional real or complex Hilbert spaces.
Our main result is the following.

\begin{theorem}\lab{thm}
The decision problems
{\rm REF}$_{\lh}$, {\rm uREF}$_{\lh}$, and 
   {\rm FEAS}$_{\mathbb{Z},\mathbb{R}}$ are p-time equivalent to each other. 
\end{theorem}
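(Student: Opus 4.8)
The plan is to establish the three-way equivalence by combining the trivial equivalence ${\rm REF}_{\lh}\equiv{\rm uREF}_{\lh}$ with reductions in both directions between these and ${\rm FEAS}_{\mathbb{Z},\mathbb{R}}$. That ${\rm REF}_{\lh}$ and ${\rm uREF}_{\lh}$ are p-time equivalent is immediate from Fact~\ref{ured}: the forward reduction is stated there, and the converse holds because every unnested term arises as $\theta(t)$ for a term $t$. Throughout I would use Fact~\ref{ol} to replace a refutability instance by the single question whether $t(\bar a)\neq 0$ for some $H\in\mc{H}$ and some assignment $\bar a$ in $\lt(H)$.

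First I would reduce ${\rm uREF}_{\lh}$ to ${\rm FEAS}_{\mathbb{Z},\mathbb{R}}$. Passing to an unnested term $T$ via Fact~\ref{ured} and applying Lemma~\ref{ldim}, if $T$ is refutable at all it is refutable in $\lt(\mathbb{R}^d)$ or in $\lt(\mathbb{C}^d)$ with $d=o(T)$, and these two finite-dimensional cases may be handled separately. For fixed $d$ I would represent each subspace variable by a $d\times d$ matrix over $F\in\{\mathbb{R},\mathbb{C}\}$ whose columns span it, as in Subsection~\ref{SF}, translating joins, meets (via Zassenhaus) and containments into polynomial feasibility conditions; the orthocomplement $U^\perp$ of $U=\Sp(A)$ is captured by requiring, for the representing matrix $B$, that $A^{*}B=0$ and that $[A\,|\,B]$ admit a right inverse, which forces $\Sp(B)=U^\perp$. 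The condition $t(\bar a)\neq 0$ on the representing matrix $M$ of the top term is encoded by $\exists w,c,\lambda.\ w=Mc\wedge\lambda\cdot\langle w,w\rangle=1$. This yields a ${\rm FEAS}_{\mathbb{Z},\mathbb{R}}$ instance in the real case and a ${\rm FEAS}_{\mathbb{Z},\mathbb{C}}$ instance in the complex case, the latter turned real by splitting each variable into real and imaginary parts. Finally the disjunction ``refutable over $\mathbb{R}$ or over $\mathbb{C}$'' is merged into a single real instance by the product trick: on disjoint variable blocks, the system of all products $p_i\cdot q_j$ is feasible iff one of the two original systems is.

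For the converse reduction ${\rm FEAS}_{\mathbb{Z},\mathbb{R}}\leq{\rm REF}_{\lh}$ I would follow the frame-forcing scheme of Subsection~\ref{Fs}, now in the ortholattice setting. Starting from the retractive terms of Lemma~\ref{retr}, every assignment yields an ON-$3$-frame $\bar a$ and coordinate-ring elements $r_i\in R(\bar a)\cong\End^{*}(a_1)$; using the involution term of Lemma~\ref{adj} and the ring-to-lattice translation of Fact~\ref{rtol}, I would then force, by further frame reductions $b_1\leq a_1$, the relations $r_i=r_i^{\dagger_{\bar a}}$ (self-adjointness), $r_i\otimes_{\bar a} r_j=r_j\otimes_{\bar a} r_i$ (commutativity), and $\tilde p_k(\bar r,\bar a)=a_1$ (the relations $p_k(\bar r)=0$). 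The construction yields lattice terms such that for every assignment $\bar a$ is either trivial, or spanning with $\bar r$ a self-adjoint, pairwise commuting family on $a_1\neq 0$ annihilating the $p_k$; and a valid input over a suitable $H$ is preserved. The refutability instance is then $a_\bot(\bar x,\bar z)\neq a_\top(\bar x,\bar z)$, equivalently a single MOL term $\neq 0$ by Fact~\ref{ol}. For the forward direction, a real common zero $\bar t$ of the $p_k$ is realized in $\lt(\mathbb{R}^3)$ with $a_1$ one-dimensional, where self-adjointness and commutativity are automatic, so the frame is non-trivial and the equation fails. For the backward direction, a non-trivial solution gives commuting self-adjoint operators $\bar f$ on a nonzero Hilbert space with $p_k(\bar f)=0$; simultaneously diagonalizing them produces a common orthonormal eigenbasis with real eigenvalues, and the eigenvalue tuple at any common eigenvector is a real common zero of the $p_k$.

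The hard part will be the forcing step of the last paragraph: one must verify, in Freese's style, that reducing $b_1$ along the self-adjointness, commutativity and polynomial defects restricts to an invariant (indeed reducing) subspace on which these $*$-ring relations genuinely hold, and that the retraction property is maintained so that valid inputs survive. Granting this, the commuting self-adjoint structure is precisely what the simultaneous-diagonalization argument needs in order to convert an operator solution back into a real scalar solution, which closes the equivalence.
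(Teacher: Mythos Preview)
Your reduction from ${\rm uREF}_{\lh}$ to ${\rm FEAS}_{\mathbb{Z},\mathbb{R}}$ is a legitimate alternative to the paper's route. The paper first invokes Fact~\ref{neu} to restrict to real $H$ (so the complex case and your product trick are not needed), and then passes through ${\rm SAT}_{\lt(H)}$ in fixed dimension $d$ by means of the discriminator term $\delta_d$ of Lemma~\ref{discr}, rather than encoding ``$\neq 0$'' via a norm inverse. Both routes are polynomial and essentially equivalent in spirit.

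The gap is in your ${\rm FEAS}_{\mathbb{Z},\mathbb{R}}\to{\rm REF}_{\lh}$ reduction. You propose to force self-adjointness, commutativity, and $p_k(\bar r)=0$ by iterated frame reductions $b_1\leq a_1$ in the style of Subsection~\ref{Fs}. But that subsection works only because a $d$-frame in a height-$d$ lattice is either spanning or trivial: the reduction step either does nothing or collapses everything, so any relation can be ``forced'' trivially. The paper flags this explicitly: ``In general, reduction of frames will preserve only very special relations.'' For an ON-$3$-frame in $\lt(H)$ with $a_\top\leq H$ of arbitrary dimension, the reduced frame $\bar b$ is typically strictly smaller yet non-trivial, and by Fact~\ref{red} the coordinate $r_i$ restricts to $R(\bar b)$ only when $b_1$ is $f_i$-invariant. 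The subspaces you would cut down to---kernels of $f_i-f_i^*$, of $[f_i,f_j]$, of $p_k(\bar f)$---are not in general invariant under all the $f_j$ and $f_j^*$, so the iterated forcing does not preserve membership of the $r_j$ in the coordinate ring, and the retraction property is lost. Your closing paragraph identifies exactly this as ``the hard part''; the point is that it is not merely hard but does not go through as stated.

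The paper avoids this entirely. After obtaining an ON-$3$-frame $\bar a$ and arbitrary $r_i\in R(\bar a)$ via Lemma~\ref{retr}, it does \emph{not} shrink the frame. Instead it forms, by an explicit ortholattice term, the subspace $u\leq a_1$ on which all pairs from $\{f_i,f_i^*\}$ commute, and then replaces $f_i$ by the symmetrization $\tfrac12(\hat f_i+\hat f_i^*)$ where $\hat f_i$ is $f_i$ on $u$ and $0$ on $u^\perp$; all of this is expressed by lattice terms using the involution $\dagger_{\bar a}$ of Lemma~\ref{adj}. These new coordinates are self-adjoint and pairwise commuting by construction, in the \emph{same} coordinate ring $R(\bar a)$, and reduce to the original $r_i$ when those were already commuting self-adjoints. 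The refutability instance then tests invertibility of $\tilde p(\bar s,\bar a)$, and the Spectral Theorem finishes the equivalence. So the missing idea is: replace Freese-style frame forcing by an explicit ortholattice-term construction (commuting locus plus $\tfrac12(\cdot+\cdot^{\dagger})$) that manufactures commuting self-adjoints inside $R(\bar a)$ without touching the frame.
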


\subsection{Outline}\lab{out}
By  
Fact~\ref{neu} below, 
 ${\sf V}(\lh)$ is generated by the 
$\la(H)$, $H \in \hr$, 
where $\hr$ 
denotes the class of all finite dimensional real Hilbert spaces.
Thus, to reduce to REF$_\lh$,
 our task is the following
(recall Fact~\ref{ol} and  that for  $\bar r \in\mathbb{R}$
one has $p_i(\bar r)=0$ for $i=1,\ldots ,m$
if and only if $\sum_i p_i(\bar r)^2=0$).
\begin{quote} 
($*$) Given a term  $p(\bar x)$ in the language of rings, 
construct (in p-time) a conjunction $\phi_p(\bar x,\bar z)$
of ortholattice identities 
such that $p(\bar r)\neq 0$ for all $\bar r$ from  $\mathbb{R}$
 if and only if ${\sf L}(H)\models \forall \bar x \forall \bar z.\,\phi_p(
\bar x,\bar z)$ for all  $H\in \hr$.
\end{quote}
In  \cite[Proposition 4.9]{jacm}  the commutativity relations required by the 
Spectral Theorem have been encoded in a conjunction
of equations (to be viewed as a system of generators and relations)
to prove that FEAS$_{\mathbb{Z},\mathbb{R}}$
reduces to SAT$_\lh$.
Though, dealing with REF$_\lh$,
we have to force such relations by Ralph Freese's
technique of retractive terms. The first step
is to construct  elements in a  coordinate $*$-ring of an ON-$3$-frame, 
via  Lemma~\ref{retr}.
This is used in Subsection~\ref{thm2}
to reduce  from    {\fs}  to REF$_{\lh}$. 
 The reduction from REF$_\lh$
is in \cite[Theorem 4.4]{jacm} via non-deterministic
BSS-machines. We give a direct proof: 
In
(Subsections~\ref{feas1}--\ref{feas}), we reduce  in fixed dimension via
SAT$_{\lt(H)}$ to {\fs}~; to combine these to the required reduction,
 we use the dimension
bound (from \cite{neu})  for failure of equations.
For later use, we describe these reductions for uREF$_{\mc{L}}$ 
in place of REF$_{\mc{L}}$.

Lemma~\ref{lu1}, below, gives the
reduction  from  FEAS$_{\mathbb{Z},\mathbb{R}}$ to
REF$_{\rh}$, $\rh$ the class of the $\End(H)$
(considered as $*$-rings with pseudo-inversion),
 $H \in \mc{H}$. 
Not requiring retractive terms,
the proof is much simpler and might be read first. 
Though, to have the   reduction from REF$_{\rh}$ (in Subsection~\ref{uref})
in polynomial-time, it 
is only to uREF$_{\lh}$; this is due to multiple occurrence
of variables in the  term translating, to
the language of $\lh$, the
fundamental operation of pseudo-inversion.
It remains open whether there is  a direct p-time reduction of
REF$_{\rh}$ to REF$_{\lh}$.

\subsection{Varieties generated by ortholattices $\lt(H)$}

\begin{fact}\lab{neu}
For every   $\mc{C} \subseteq 
 \lh$,  
the variety ${\sf V}(\mc{C})$
either is generated by one or two members of  $\mc{C}$
or is  equal to $\lh$
and is generated by any family in 
$ \lh$
  having
unbounded dimensions.  
\end{fact}
We mention that 
${\sf V}(\lh)$ contains all projection ortholattices
of finite Rickart $C^*$-algebras \cite{hs}.

\begin{proof}
Recall that for any $d$ there is, up to (isometric) isomorphism,
just one 
 real (respectively complex) 
Hilbert space of dimension $d$.
Also, if $H_1,H_2$ are both real (respectively complex)
 and $d_1 \leq d_2$ then ${\sf L}(H_1)$
is a homomorphic image of a sub-ortholattice of ${\sf L}(H_2)$,
namely ${\sf L}(U) \times {\sf L}(U^\perp)$
embeds into ${\sf L}(H_2)$ where $U \in {\sf L}(H_2)$
with $\dim U=d_1$.
Recall, finally, that ${\sf L}(H)$, $H\in \hr$,
 embeds into the  subspace ortholattice
of the complex Hilbert space $\mathbb{C }\otimes_{\mathbb{R}}H$; and
recall  that
for $H$ over $\mathbb{C}$,
considering $H$ as the complexification 
of the $\mathbb{R}$-vector space $H_0$,
${\sf L}(H)$ embeds into
 ${\sf L}(H_0)$  where $H_0$
is endowed with the real part of the scalar product on $H$ .
\end{proof}

\subsection{Reducing {\fs}  to {\rm REF}$_{\lh}$}\lab{thm2}
We have to prove (*) from Subsection~\ref{out}. In view of  Lemma~\ref{retr}
we may continue from an ON-$3$-frame $\bar a$ and 
 $r_1,\ldots ,r_n\in R(\bar a)$.
Now, recall that for $f \in \End(a_1)$
one has 
\[\ker f = a_1\cap \omega_{\bar a} f. \] 
Thus, with $r_i:=\omega_{\bar a} f_i$, $i=1,\ldots ,n$, 
and $X:=\{r_i,r_i^{\dagger_{\bar a}}\mid i=1,\ldots ,n\}$
put
\[u:=k(\bar r,\bar a):= \bigcap_{r,s \in X} a_1\cap(r\otimes_{\bar a} s
\ominus_{\bar a} s \otimes_{\bar a} r).\]
Denoting by $c$ the set of all vectors $v\in a_1$ on which 
any two  of  $f_i,f_i^*,f_j,f_j^*$ commute,
we have $c=u$.
Let (in infix notation)  $x\tilde{+}_{\bar z}y$ 
denote the lattice term
defining addition in $R(\bar a)$ 
and $\widetilde{2^{-1}}_{\bar z}$ the one for the inverse
of $a_{12}\tilde{+}_{\bar a}a_{12}$ (cf. Fact~\ref{inverse}). 
Define
\[s_i:= \widetilde{2^{-1}}_{\bar a}\bigl((r_i\cap (u+a_2)  + u^\perp) \tilde{+}_{\bar a} (r_i\cap (u+a_2)  + u^\perp)^{\dagger_{\bar a}}\bigr),  \] that is
$s_i =\omega_{\bar a} \frac{1}{2}(\hat{f}_i+\hat{f}^*_i) $ where $\hat{f}_i$
is $f_i$ on $u$ and $0$ on $u^\perp$. 
In particular, the $\hat{f}_i$ are self-adjoint and commute on $a_1$. 
Moreover, if the $r_i$, respectively $f_i$, commute and are self-adjoint then $a_1=k(\bar r,\bar a)$,
$f_i=\hat{f}_i$ and $r_i=s_i$.

Of course, these definitions work uniformly for all $H \in \mc{H}$ and 
ON-$3$-frames $\bar a$ and $r_i\in R(\bar a)$ in $\lt(H)$. 
Thus, we have achieved  terms $t_j(\bar z)$, $t_{1j}(\bar z)$, and 
$s_i(\bar x,\bar z)$ which are retractive from no relations
to ON-$3$-frames $\bar a$ with $\bar r$ in $R(\bar a)$
consisting of commuting self-adjoints.
Given a term  $p(\bar x)$ in the language of rings,
let $\tilde{p}(\bar x,\bar z)$ the associated lattice term, that is,
for any frame $\bar a$
\[ \omega_{\bar a}p(f_1,\ldots, f_n)
=\tilde{p}(\omega_{\bar a}f_1,\ldots,\omega_{\bar a} f_n) 
\mbox{ for } f_i \in \End(a_1).\]
Observe that, due to uniqueness of  occurrences of variables $\bar x$
(Subsections~\ref{coo} and  \ref{onframe} and Lemma~\ref{adj}), 
$\tilde{p}(\bar x,\bar z)$ is constructed from $p(\bar x)$
in p-time.
Let the ortholattice term $\hat{p}(\bar x,\bar z)$
be obtained from $\tilde{p}(\bar x,\bar z)$
substituting first $s_i(\bar x,\bar z)$  for $x_i$,  then
$t_j(\bar z)$ for $z_j$ and  $t_{1j}(\bar z)$ for $z_{1j}$.
Let $p^\#(\bar x,\bar z)=0$ be the ortholattice 
identity equivalent to the conjunction of
$\hat{p}(\bar x,\bar z)\cap z_1=0$ and $\hat{p}(\bar x,\bar z)+z_1=z_1+z_2$
(see Fact~\ref{ol}).
Observe that this is still obtained in p-time from $p(\bar x)$.
Then  the following are equivalent 
\begin{enumerate}
\item $p^\#(\bar x,\bar z)=0$ is valid in $\lh$.
\item For all $H\in \hr$,  ON-$3$-frames $\bar a$ of $\la(H)$
and commuting self-adjoint $r_1, \ldots ,r_n \in R(\bar a)$
one has $\tilde{p}(r_1,\ldots ,r_n)$ invertible in  $R(\bar a)$. 
\item For all $H\in \hr$  and commuting self-adjoint
endomorphisms $f_1,\ldots ,f_n$ of $H$
one has $p(f_1,\ldots ,f_n)$ invertible.
\item $p(\rho_1, \ldots ,\rho_n)\neq 0$ for all $\rho_1,\ldots ,\rho_n\in
\mathbb{R}$.
\end{enumerate}  
The equivalence of (i), (ii), and (iii)
is obvious by the above, that of (iii) and (iv) by
the Spectral Theorem: the $f_i$ have a common basis of eigenvectors. 
Thus, we have obtained $(*)$.

\subsection{Reducing {\rm SAT}$_{\lt(H)}$ to {\rm FEAS}$_{\mathbb{Z},\mathbb{R}}$}\lab{feas1}
We continue the proof in Subsection~\ref{SF}
to include orthocomplementation
in the reduction from {\rm SAT}$_{\lt(H)}$ to {\rm FEAS}$_{\mathbb{Z},\mathbb{R}}$ 
 for the  case of fixed dimension
of $V_{\mathbb{R}}=H$. Here, we may assume $H=\mathbb{R}^d$
with the canonical inner product.
Observe that  $\Sp(B)^\perp = \Sp(C)$ 
if and only if $C^tB=0$ and $BY+CZ=I$
for suitable $Y,Z$. Here $C^t$ denotes the transpose of $C$.
Thus, we translate
  $y=z^\perp$
 into 
\[\hat{z}^t\hat{y} =0  \,\wedge\; \exists Y \exists Z.\, \hat{y}Y+\hat{z}Z=I.\]  
In view of Fact~\ref{bsat} 
we have 
a polynomial $p_\tau(x,y)$ 
over $\mathbb{Z}$ 
and for each $d$ the
reduction $\tau_d$ in time $p_\tau(x,d)$
from SAT$_{\lt(H)}$ to {\fs}, where $H \in \hr$ and $\dim H=d$.

\subsection{Reducing {\rm uREF}$_{\lt(H)}$ to {\rm SAT}$_{\lt(H)}$}\lab{feas2}
Given $d$, let $\phi_d(\bar z)$ be the conjunction
of equations defining $d$-frames $\bar a$ such that 
$a_\bot=0$ and $a_\top=1$. Recall the term 
$\delta_d(x,\bar z)$ from Lemma~\ref{discr}.
Now, given an unnested term $T(\bar x)$ in the language of ortholattices,
define
$\sigma_d(T)(\bar x)$ as
\[ \exists \bar z \exists y.\, T(\bar x)=y 
\,\wedge\, \delta_d(y,\bar z)=1 \,\wedge\, \phi_d(\bar z)  
\]
the prenex equivalent of which  is an existentially
quantified conjunction of equations. Then  the following holds.
\begin{itemize}
\item[] For any  $H\in\hr$, $\dim H=d$,
one has $\vb \models \exists \bar x.\,T(\bar x)\neq 0$
if and only if $\vb \models \exists \bar x \,\sigma_d(T)(\bar x)$.
\end{itemize}
Hence, there is
 a polynomial $p_\sigma(x,y)$ over $\mathbb{Z}$
such that 
  $\sigma_d$ is a $p_\sigma(x,d)$-time reduction
from uREF$_{\lt(H)}$ to SAT$_{\lt(H)}$, for each $d$ and
$H \in \hr$, $\dim H=d$.

\subsection{Reducing {\rm uREF}$_\mc{L}$  
to {\fs} and proof of Theorem~\ref{thm}}\lab{feas}
Combining Subsections~\ref{feas1} and \ref{feas2}, the reduction $\rho_d$ 
from {\rm uREF}$_{\lt(H)}$ to {\fs} 
given by
  $\rho_d(T)=\tau_d(\sigma_d(T))$
is carried out in time $p_\tau(p_\sigma(x,d),d)$.
And, in view of  Lemma~\ref{ldim},  a p-time reduction
 of {\rm uREF}$_{\lh}$ to {\fs}  is obtained
applying $\rho_d$ to $T$ where
$d=o(T)$. Since $o(T) \leq |T|$, obviously,
a polynomial bound in terms of $|T|$
is given by    $p_\tau(p_\sigma(x,x),x)$.

By Fact~\ref{ured} we derive a p-time reduction from 
{\rm REF}$_\mc{L}$  
to {\fs} via {\rm uREF}$_\mc{L}$.
The converse reduction is Subsection~\ref{thm2}.

\subsection{Equivalences for arbitrary $\vb$}
\begin{corollary}\lab{dsat}
Any two of the following  decision problems are p-time equivalent: {\fs},
${\rm REF}_{\lt(H)}$, ${\rm SAT}_{\lt(H)}$ where $H \in \mc{H}$, $\dim H\geq 3$,
arbitrary.
\end{corollary}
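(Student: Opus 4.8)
The plan is to prove the Corollary by exhibiting a cycle of three p-time reductions for a fixed $H\in\mc{H}$ of dimension $d:=\dim H\geq 3$,
\[
\mathrm{REF}_{\lt(H)}\;\le_p\;\mathrm{SAT}_{\lt(H)}\;\le_p\;\mathrm{FEAS}_{\mathbb{Z},\mathbb{R}}\;\le_p\;\mathrm{REF}_{\lt(H)},
\]
so that all three problems are mutually p-time equivalent. Since $d$ is a fixed constant, every reduction below running in time polynomial in the input (with $d$ plugged in) is a genuine p-time reduction. Throughout, $\mathrm{REF}$ and $\mathrm{SAT}$ for the MOL $\lt(H)$ are read in the normalized $t=0$, $t=1$ forms of Fact~\ref{ol}, and I package a feasibility list $p_1,\dots,p_m$ into the single polynomial $\sum_i p_i^2$, which has the same real zeros.

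The first two reductions are essentially already available. For $\mathrm{REF}_{\lt(H)}\le_p\mathrm{SAT}_{\lt(H)}$ I would invoke Proposition~\ref{sattoSAT}: the lattice $\lt(H)$ is modular of height $d$ and admits a non-trivial $d$-frame (take $a_i=e_iF$, $a_{1j}=(e_1-e_j)F$ for an orthonormal basis $e_1,\dots,e_d$), and the reduction there is uniform over all such lattices; as its construction only adds lattice operations, it applies verbatim when $t,s$ are ortholattice terms. For $\mathrm{SAT}_{\lt(H)}\le_p\mathrm{FEAS}_{\mathbb{Z},\mathbb{R}}$ I would first pass to $\mathrm{sSAT}_{\lt(H)}$ by Fact~\ref{bsat} (legitimate since $0\neq 1$ in $\lt(H)$), and then use the fixed-dimension matrix encoding of Subsections~\ref{SF} and \ref{feas1}. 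For real $H=\mathbb{R}^d$ this is precisely $\tau_d$, running in time $p_\tau(x,d)$. For complex $H=\mathbb{C}^d$ I would run the same encoding, translating $y=z^\perp$ through the Hermitian adjoint $\hat z^*$, and represent each complex matrix entry by a pair of real variables; the adjoint then merely negates imaginary parts, every matrix equation splits into real and imaginary components, and one lands in $\mathrm{FEAS}_{\mathbb{Z},\mathbb{R}}$ at the cost of roughly doubling the variables.

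The hard part is the converse $\mathrm{FEAS}_{\mathbb{Z},\mathbb{R}}\le_p\mathrm{REF}_{\lt(H)}$, where the ON-$3$-frame machinery of Subsection~\ref{thm2}—designed for the whole class $\lh$ using arbitrarily large spaces—must be confined to one fixed $H$. I would reuse unchanged the retractive terms of Lemmas~\ref{retr} and \ref{adj} and the identity $p^\#(\bar x,\bar z)=0$, whose construction is uniform in $H$. It then remains to check that, for fixed $H$ with $d\geq 3$, validity of $p^\#=0$ in $\lt(H)$ is still equivalent to $p$ having no real zero. The retraction property gives, as in Subsection~\ref{thm2}, that $p^\#=0$ holds in $\lt(H)$ iff $\tilde p(\bar r)$ is invertible for every ON-$3$-frame $\bar a$ of $\lt(H)$ and all commuting self-adjoint $r_i\in R(\bar a)$. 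One implication is the spectral argument, valid for any $H$: commuting self-adjoints on $a_1$ share a real eigenbasis, so $\tilde p(\bar r)$ is invertible whenever $p$ is nowhere zero on $\mathbb{R}$. For the reverse implication I would exploit $d\geq 3$: given a real zero $\bar\rho$ of $p$, choose an ON-$3$-frame $\bar a$ with $\dim a_1=1$ (orthonormal $e_1,e_2,e_3$, working in $[0,a_1+a_2+a_3]$). On the one-dimensional space $a_1$ every self-adjoint endomorphism is a real scalar, so $f_i=\rho_i\,\id$ are commuting self-adjoints with $\tilde p(\bar r)=\omega_{\bar a}(p(\bar\rho)\,\id)=\omega_{\bar a}(0)=a_1$, which is non-invertible. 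Hence $p^\#=0$ fails in $\lt(H)$, and the $\mathrm{REF}_{\lt(H)}$ instance $p^\#(\bar x,\bar z)\neq 0$ is satisfiable in $\lt(H)$ exactly when $p$ has a real zero, completing the reduction.

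Assembling the three steps closes the cycle and proves the Corollary. The only genuine obstacle is the last reduction: the full-class argument relies on unbounded dimension, and what must be verified is that a single $\lt(H)$ with $d\geq 3$ already captures all of $\mathrm{FEAS}_{\mathbb{Z},\mathbb{R}}$. This is exactly where the hypothesis $d\geq 3$ is used, via the existence of an ON-$3$-frame with one-dimensional $a_1$, whose coordinate $*$-ring is (isomorphic to) $\mathbb{R}$.
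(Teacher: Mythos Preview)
Your proposal is correct and follows essentially the same route as the paper: the cycle $\mathrm{REF}_{\lt(H)}\to\mathrm{SAT}_{\lt(H)}\to\mathrm{FEAS}_{\mathbb{Z},\mathbb{R}}\to\mathrm{REF}_{\lt(H)}$ via the frame-based reduction to SAT, the fixed-dimension matrix encoding (with real/imaginary parts in the complex case), and the ON-$3$-frame machinery of Subsection~\ref{thm2}. Your explicit justification that $d\geq 3$ suffices---namely, that an ON-$3$-frame with one-dimensional $a_1$ exists in $\lt(H)$, so that the self-adjoint part of $R(\bar a)$ is exactly $\mathbb{R}$---is precisely what the paper compresses into the parenthetical ``since ON-$3$-frames in lower sections of $\lt(H)$ exist.''
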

Using 
 non-deterministic BSS-machines, this has been derived in
 Corollaries 2.8 and  2.12 in \cite{jacm}. 
\begin{proof}
The reduction from 
${\rm REF}_\vb$ via  ${\rm SAT}_\vb$ to {\fs} 
is given by Subsections~\ref{feas2} and \ref{feas1}
(in the complex case one has to use real and imaginary
parts for the encoding into $\mathbb{R}$).
Conversely,  
for fixed $H \in\mc{H}$, the equivalence of (iv) and
of the statements in  (ii) and  (iii) 
in Subsection~\ref{thm2} is valid (since ON-$3$-frames in lower
sections of $\vb$ exist).
\end{proof}

\subsection{$2$-distributive modular ortholattices}
A (ortho)lattice $L$ is $2$-\emph{distributive} if 
the identity \[x \cap \sum_{i=0}^2 x_i
= \sum_{j\neq k} x\cap (x_j+x_k)\]
holds in $L$. Examples of such are, for any cardinal $n>0$, 
the MOLs  MO$_n$ of height $2$ with $n$ pairs $a,a^\perp$ of atoms.
Put $M_0={\bf 2}$.
\begin{fact}\lab{2d} \begin{enumerate}
\item Any finite MOL  is $2$-distributive.
\item A $2$-distributive MOL  is subdirectly irreducible
if and only if it is isomorphic to some {\rm MO}$_n$.
\item For any class $\mc{C}$ of $2$-distributive MOLs,
${\sf V}(\mc{C})$ is generated by some {\rm MO}$_n$,
$n \leq \omega$.
\end{enumerate}
\end{fact}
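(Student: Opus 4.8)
The plan is to make (ii) the structural core and to read off (i) and (iii) from it, using throughout that $2$-distributivity is a single identity, hence preserved by ${\sf H},{\sf S},{\sf P}$. The geometric content I will exploit is that, for a complemented modular lattice (and every MOL is complemented, via $a\oplus a^\perp=1$), the $2$-distributive law already fails once one has three independent atoms together with a fourth atom lying off the three lines they determine; so on MOLs ``$2$-distributive'' is a bound on projective dimension.

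For (ii), let $L$ be a subdirectly irreducible $2$-distributive MOL. Since $a\mapsto a^\perp$ is a dual automorphism it maps central elements to central elements, and any central $c\notin\{0,1\}$ would give an orthodirect decomposition $L\cong[0,c]\times[0,c^\perp]$ and hence two congruences meeting in the identity; subdirect irreducibility thus forces the centre to be trivial, so $L$ is a directly indecomposable complemented modular lattice, i.e.\ an irreducible projective geometry in which every line carries at least three points. If $L$ had height $\geq 3$, three independent atoms $p_1,p_2,p_3$ would span a projective plane $[0,p_1+p_2+p_3]$, in which a fourth atom $x$ lies on none of the lines $p_i+p_j$; then $x\cap(p_1+p_2+p_3)=x$ while $\sum_{i\ne j}x\cap(p_i+p_j)=0$, contradicting $2$-distributivity. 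Hence $L$ has height $\leq 2$: height $1$ gives $L\cong\mathbf 2={\rm MO}_0$, and in height $2$ the map $a\mapsto a^\perp$ is a fixed-point-free involution on the atoms (as $a\cap a^\perp=0$), so pairing atoms yields $L\cong{\rm MO}_n$. Conversely, a direct computation in height $2$ shows each ${\rm MO}_n$ is $2$-distributive, and ${\rm MO}_n$ is simple for $n=0$ and for $n\geq 2$, hence subdirectly irreducible; the only exceptional member is ${\rm MO}_1\cong\mathbf 2\times\mathbf 2$, which is reducible but lies in ${\sf V}({\rm MO}_0)$ and so does not affect (iii).

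For (i), I would write a finite MOL $L$ as a subdirect product of finite subdirectly irreducible MOLs and rerun the dichotomy of (ii), except that now $2$-distributivity is the conclusion, so the height-$\geq 3$ case must be excluded by the orthocomplementation alone. A finite subdirectly irreducible MOL of height $\geq 3$ is, being irreducible, either a finite projective plane (height $3$) or a Desarguesian space (height $\geq 4$). In the first case the orthocomplementation is a polarity without absolute points, contradicting Baer's theorem that every polarity of a finite projective plane has an absolute point $p\leq p^\perp$; in the second case it is $\lt(V_F)$ with $F$ a finite field and $\dim V_F\geq 4$, and $a\cap a^\perp=0$ would require an anisotropic reflexive form on $V_F$, impossible since every such form in $\geq 3$ variables over a finite field is isotropic. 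Thus every finite subdirectly irreducible MOL has height $\leq 2$, hence is $\mathbf 2$ or a finite ${\rm MO}_n$, all of which are $2$-distributive; as the $2$-distributive law is preserved by ${\sf S}$ and ${\sf P}$, so is $L$.

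For (iii), MOLs form a congruence-distributive variety, having a lattice reduct, so J\'onsson's lemma gives ${\sf V}(\mc{C})_{\rm SI}\subseteq{\sf HSP}_U(\mc{C})$; since $2$-distributivity is preserved by ${\sf H},{\sf S},{\sf P}_U$, every algebra on the right is a $2$-distributive MOL, and every subdirectly irreducible one is, by (ii), some ${\rm MO}_n$. As ${\rm MO}_m$ embeds as a sub-ortholattice of ${\rm MO}_n$ for $m\leq n$, these are linearly ordered by the varieties they generate; putting $N=\sup\{n:{\rm MO}_n\in{\sf V}(\mc{C})\}\leq\omega$ and realizing ${\rm MO}_\omega$ as a sub-ortholattice of an ultraproduct of finite ${\rm MO}_n$'s when $N$ is a limit, one obtains ${\sf V}(\mc{C})={\sf V}({\rm MO}_N)$. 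I expect the main obstacle to be the forward direction of (ii): converting the equational $2$-distributive law into the geometric dimension bound and reading off the orthocomplementation as a fixed-point-free pairing of atoms, together with the bookkeeping of the degenerate case ${\rm MO}_1$.
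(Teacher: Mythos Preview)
Your treatments of (i) and (iii) are correct and in fact more explicit than the paper's sketch; the issue is the forward direction of (ii), where your argument has a genuine gap.

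You write: ``subdirect irreducibility thus forces the centre to be trivial, so $L$ is a directly indecomposable complemented modular lattice, i.e.\ an irreducible projective geometry in which every line carries at least three points.'' The ``i.e.'' is unjustified: a directly indecomposable (even simple) complemented modular lattice need not be a projective geometry, because it need not be atomic. The projection ortholattice of a type~${\rm II}_1$ factor is a simple MOL with no atoms whatsoever; it is not $2$-distributive, so it is not a counterexample to the Fact, but it shows your inference fails. Your subsequent step --- producing three independent atoms and a fourth in general position --- presupposes atoms, so it cannot even begin without this. Nothing in your argument rules out an atomless subdirectly irreducible $2$-distributive MOL a priori.

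The paper closes this gap by a different route. It invokes J\'onsson's theorem that every complemented modular lattice $L$ embeds into a direct product of subspace lattices $L_i$ of irreducible projective spaces with each $L_i\in{\sf V}(L)$. Each $L_i$ therefore inherits $2$-distributivity, and for an actual projective geometry your atom argument now applies and forces height $\leq 2$. Since MOL congruences coincide with lattice congruences, $L$ is subdirectly irreducible as a lattice, so one projection $L\to L_i$ is injective; hence $L$ itself has height $\leq 2$. From this point your identification of $L$ as some ${\rm MO}_n$ (and your handling of the degenerate ${\rm MO}_1$) is exactly right. In short, the missing ingredient is J\'onsson's embedding theorem, which supplies the atomistic ambient you are tacitly assuming.
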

\begin{proof}
Recall that  in MOLs congruences
are the same as lattice congruences.
Thus, if $L$ is a finite MOL,
it is isomorphic to a direct product of
subspace lattices $L_i$ of finite irreducible projective spaces,
and the $L_i$ are  MOLs. Thus,
according to Baer \cite{baer} the $L_i$ are of height $\leq 2$.
Now let $L$ be a
$2$-distributive  subdirectly irreducible MOL.
According to J\'{o}nsson \cite{jonsson} 
any complemented modular lattice $L$ embeds into a direct product
of subspace lattices $L_i$ of irreducible projective spaces,
$L_i \in {\sf V}(L)$. Being a subdirectly irreducible lattice,
 $L$ embeds into some $L_i$
which is of height 
$\leq 2$ by $2$-distributivity. Thus, $L$ is of height $\leq 2$
whence isomorphic to some {\rm MO}$_n$.  (iii) follows from the fact
that MO$_n$ embeds into MO$_m$ for $n \leq m$ 
and that any variety is generated by its at most
countable subdirectly irreducibles. \end{proof}

\begin{proposition}
If $\mc{C}$ consists of $2$-distributive MOLs
then {\rm SAT}$_\mc{C}$ and {\rm REF}$_\mc{C}$ are $\mc{NP}$-complete.
\end{proposition}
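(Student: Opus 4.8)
The plan is to collapse the whole class to a single generator. By Fact~\ref{2d}(iii) there is a cardinal $n\le\omega$ with ${\sf V}(\mc{C})={\sf V}({\rm MO}_n)$, so ${\sf Eq}(\mc{C})={\sf Eq}({\rm MO}_n)$ and the triviality problem for $\mc{C}$ is the triviality problem for ${\sf V}({\rm MO}_n)$. Hence ${\rm REF}_{\mc{C}}$ coincides with ${\rm REF}_{{\rm MO}_n}$ and ${\rm SAT}_{\mc{C}}$ is p-time equivalent to ${\rm SAT}_{{\rm MO}_n}$, so it suffices to treat the single algebra ${\rm MO}_n$. (If $\mc{C}$ has only one-element members, both problems are vacuous; I assume a non-trivial member.)

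For membership in $\mc{NP}$ I would argue that witnesses are small. Every element of a height-$2$ lattice ${\rm MO}_m$ is $0$, $1$, or an atom, and since distinct atoms meet in $0$ and join to $1$, the sub-ortholattice generated by $k$ elements is obtained by adjoining to $\{0,1\}$ at most $k$ complementary pairs of atoms; thus it is isomorphic to some ${\rm MO}_j$ with $j\le k$. Consequently, if an identity $t=s$ in $k$ variables fails in ${\sf V}({\rm MO}_n)$ it already fails in a subalgebra ${\rm MO}_j$ of ${\rm MO}_n$ with $j\le\min(k,n)$, and likewise a presentation is non-trivially satisfiable over ${\sf V}({\rm MO}_n)$ iff it is so in such an ${\rm MO}_j$. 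Since ${\rm MO}_j$ then has at most $2k+2$ elements, a nondeterministic machine can guess $j$ together with the assignment and verify in polynomial time; this places ${\rm REF}_{\mc{C}}$ and ${\rm SAT}_{\mc{C}}$ in $\mc{NP}$.

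For $\mc{NP}$-hardness I would split on $n$. If $n\ge 2$ or $n=\omega$, then ${\rm MO}_n$ has at least three atoms, so it carries a non-trivial $2$-frame: take complementary atoms $a_1,a_2$ and any third atom as the axis $a_{12}$, with $a_\bot=0$, $a_\top=1$. Theorem~\ref{np}, applied with $d=2$ to the modular-lattice reduct of ${\rm MO}_n$, then yields $\mc{NP}$-hardness of ${\rm REF}_{{\rm MO}_n}$ and ${\rm SAT}_{{\rm MO}_n}$; the instances it produces involve only lattice terms, on which $^\perp$ does not occur and on which refutation (respectively non-trivial satisfaction) in the ortholattice ${\rm MO}_n$ agrees with that in its lattice reduct, so the hardness transfers to the MOL problems. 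If instead $n\le 1$, then ${\rm MO}_n$ is $\mathbf{2}$ or $\mathbf{2}\times\mathbf{2}$, both distributive with ${\sf V}({\rm MO}_n)={\sf V}(\mathbf{2})$, so ${\rm REF}_{\mc{C}}={\rm REF}_{\mathbf{2}}$ and ${\rm SAT}_{\mc{C}}$ is p-time equivalent to ${\rm SAT}_{\mathbf{2}}$, which are $\mc{NP}$-hard by Willard's Theorem~\ref{ross}. In all cases both problems are $\mc{NP}$-hard, and combined with the previous paragraph they are $\mc{NP}$-complete.

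The step I expect to require the most care is the bounded-witness analysis for membership, especially when $n=\omega$: one must be sure that a failure of an identity over the infinitely generated variety ${\sf V}({\rm MO}_\omega)$ is always reflected into a finite ${\rm MO}_j$ whose size is polynomial (indeed linear) in the number of variables, and that non-trivial satisfaction likewise localizes, so that the nondeterministic certificate is genuinely short. The hardness case split, by contrast, is routine once the $2$-frame in ${\rm MO}_n$ for $n\ge 2$ is exhibited and the reduction from ${\rm REF}_{\mathbf{2}}$ in Theorem~\ref{np} is invoked.
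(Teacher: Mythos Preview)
Your argument is correct, but it proceeds quite differently from the paper. The paper's proof is three lines of citations to \cite{jacm}: membership in $\mc{NP}$ is \cite[Proposition~1.19]{jacm}; $\mc{NP}$-hardness of ${\rm SAT}_{\mc{C}}$ is \cite[Proposition~1.16]{jacm}; and for ${\rm REF}_{\mc{C}}$ the paper uses Fact~\ref{2d}(iii) to get ${\sf Eq}(\mc{C})={\sf Eq}({\rm MO}_n)$ and then invokes \cite[Theorem~1.20]{jacm} to conclude co$\mc{NP}$-hardness of the equational theory. No case split on $n$, no internal appeal to Theorem~\ref{np} or Theorem~\ref{ross}.

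Your route is more self-contained within the present paper: you recover $\mc{NP}$-membership by the explicit bounded-witness argument for height-$2$ MOLs (this is essentially what underlies \cite[Proposition~1.19]{jacm}), and you obtain hardness from Theorem~\ref{np} applied to the lattice reduct of ${\rm MO}_n$ for $n\ge 2$, falling back to Theorem~\ref{ross} for the distributive cases $n\le 1$. The transfer step---that the lattice-term reductions produced in Theorem~\ref{np} and Proposition~\ref{sattoSAT} are equally valid as MOL instances because $^\perp$ is unused and the answers coincide---is a point the paper does not need to make, since its cited results are already stated for ortholattices. What you gain is independence from the external references; what you pay is the case split and the extra care about reducts. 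Both arguments tacitly assume $\mc{C}$ has a non-trivial member, which you at least make explicit.
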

\begin{proof}
That both are in $\mc{NP}$ is \cite[Proposition 1.19]{jacm}.
$\mc{NP}$-hardness of SAT$_\mc{C}$ is \cite[Proposition 1.16]{jacm}.
Now, ${\sf Eq}(\mc{C}) ={\sf Eq}({\rm MO}_n)$ 
for some $n$ by (iii) of Fact \ref{2d} whence 
its decision problem is co$\mc{NP}$-hard by 
\cite[Theorem 1.20]{jacm}.
\end{proof} 

\section{Preliminaries: Part III}\lab{ring}
\subsection{Translations}\lab{trans}
Consider the quantifier free parts  $\Lambda_1$ and $\Lambda_2$
of two first order languages with equality (also denoted by $=$).
Let $\bar z$ be  a string of specific variables in $\Lambda_2$.
Suppose that for each variable $x$ in $\Lambda_1$ 
there is given a term $\tau(x)(\bar z)$ in $\Lambda_2$
and, for 
each
operation symbol $f$ in $\Lambda_1$ and term $f(\bar x)$,
 a term $\tau(f(\bar x))(\tau(\bar x),\bar z)$
where $\tau(\bar x)$ denotes the string of $\tau(x_i)$'s.
Then there is a unique extension to a map (also denoted
by $\tau$) from $\Lambda_1$ to $\Lambda_2$ such that
for any $n$-ary operation symbol $f$ and terms 
$t_i(\bar y_i)$ in $\Lambda_1$  
\[\begin{array}{l}\tau(f(t_1(\bar y_1), \ldots, f(t_n(\bar y_n)))=\\ 
{}\hfill =\tau(f)( \tau(t_i(\bar y_1))(\tau(\bar y_1),\bar z), \ldots, \tau(t_n(\bar y_n))(\tau(\bar y_n),\bar z)  ,\bar z) \end{array} \]
(equality of terms)
and such that an equation
$t(\bar x)= s(\bar x)$
is translated into the equation  $\tau(t(\bar x)) =\tau(s(\bar x))$, and,
 finally, 
such that  $\tau$ is compatible
with the propositional junctors.
\begin{fact}
The translation $\tau(T(\bar x))$ of unnested terms $T(\bar x)$
is carried out in p-time.
\end{fact}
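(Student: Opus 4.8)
The plan is to read the bound directly off the fact that, in an unnested term $T(\bar x) = (\phi_T, y_T)$, every operation symbol is applied only to variables. Recall that $\tau$ acts on the simple formula $\exists \bar u_T.\, \phi_T(\bar x, y_T, \bar u_T) \wedge y_T = y$ abbreviated by $T(\bar x) = y$; since $\tau$ commutes with the propositional connectives and translates each equation separately, $\tau(T(\bar x))$ is just the conjunction of the translations of the individual basic equations of $\phi_T$ together with the translation of $y_T = y$. The whole point is that the potentially dangerous clause $\tau(f(t_1, \ldots, t_n)) = \tau(f)(\tau(t_1), \ldots, \tau(t_n), \bar z)$, which on an arbitrary term may duplicate translated subterms at every level of nesting (whenever a template $\tau(f)$ repeats an argument slot) and thereby force $|\tau(t)|$ to grow exponentially in $|t|$, never recurses here: in a basic equation $y = f(y_1, \ldots, y_m)$ the arguments $y_i$ are variables, so $\tau(y_i)$ is merely the fixed per-variable translation and the recursion bottoms out immediately.

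Concretely, I would process $\phi_T$ one basic equation at a time. Each $\Lambda_1$-variable $v$ occurring in $T$ is replaced by its prescribed translation $\tau(v)(\bar z)$, which is of bounded size, and each basic equation $y = f(y_1, \ldots, y_m)$ is rewritten as $\tau(y) = \tau(f)(\tau(y_1), \ldots, \tau(y_m), \bar z)$, where $\tau(f)$ is the single fixed $\Lambda_2$-term attached to the operation symbol $f$, hence of constant size. Thus each translated basic equation has size bounded by a constant depending only on $\Lambda_1$ and $\Lambda_2$, and there are at most $|T|$ of them, so $|\tau(T(\bar x))|$ is linear in $|T|$ and is produced by an obvious left-to-right Turing-machine pass in time linear in $|T|$. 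If an unnested $\Lambda_2$-term is wanted as output, I would in addition apply Fact~\ref{ured} to each constant-size template $\tau(f)$, introducing a bounded number of fresh auxiliary $\Lambda_2$-variables local to that equation; this is an $O(1)$ step per equation and preserves the linear bound.

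I do not expect a genuine obstacle beyond correctly locating where the exponential blow-up could have occurred and checking that unnesting has already removed it. The substantive content is precisely the contrast with arbitrary terms: there the nesting forces repeated inlining of the $\tau(t_i)$, whereas the named-intermediate-variable (circuit) structure of an unnested term translates each subresult exactly once and thereafter only refers to it by name. This is exactly the phenomenon anticipated in Subsection~\ref{u2}, where formulating problems via basic equations is introduced to avoid possible exponential blow-up in translations.
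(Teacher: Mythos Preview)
Your argument is correct and is exactly the reasoning the paper leaves implicit: the Fact is stated in the paper without any proof, as something evident from the definitions. You have correctly identified the single substantive point, namely that in an unnested term each basic equation $y=f(y_1,\ldots,y_m)$ has only variables as arguments, so the recursive clause for $\tau(f(\ldots))$ bottoms out immediately and produces a $\Lambda_2$-equation whose size is bounded by a constant depending only on the fixed templates $\tau(f)$ and the per-variable translations; summing over the $O(|T|)$ basic equations gives the linear bound. Your remark contrasting this with the potential exponential blow-up for nested terms is also on target and is precisely why the paper routes the later reductions through unnested terms (cf.\ Subsection~\ref{out2} and Fact~\ref{trans1}).
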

Recall the translation $\theta_i$ (Fact~\ref{ured})
within $\Lambda_i$  of
terms  into unnested terms. 
Structural induction yields the following.

\begin{fact}
For any term $t(\bar x)$ in $\Lambda_1$, 
 $\tau(t(\bar x))=y$ is  logically equivalent
to  $\tau(\theta_1(t(\bar x)))=y$. 
\end{fact}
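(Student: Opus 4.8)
The plan is to argue by structural induction on the term $t(\bar x)$, exactly as signalled by the remark preceding the statement, tracking how the unnesting $\theta_1$ and the translation $\tau$ interact. The only ingredients I would use are the defining compositional clause for $\tau$ displayed above, the circuit construction of $\theta_1$ from Subsection~\ref{u2} together with Fact~\ref{ured}, and the stated compatibility of $\tau$ with conjunction and with the existential quantification over auxiliary variables. For the base case, let $t$ be a variable $x_i$. Then $\theta_1(t)$ is the trivial unnested term $(\emptyset,x_i)$, so $\theta_1(t(\bar x))=y$ is literally $x_i=y$, whose translation is $\tau(x_i)=\tau(y)$, i.e. exactly $\tau(t(\bar x))=y$; here the two formulas are not merely equivalent but identical. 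A nullary operation symbol is the $n=0$ instance of the inductive step below.

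For the inductive step, write $t=f(t_1,\ldots,t_n)$. On the one hand, the defining clause for $\tau$ gives
\[ \tau(t(\bar x)) = \tau(f)\big(\tau(t_1(\bar x)),\ldots,\tau(t_n(\bar x)),\bar z\big). \]
On the other hand, by the circuit construction of Subsection~\ref{u2} the unnested term $\theta_1(t)$ is assembled from the $\theta_1(t_i)$ by introducing fresh output variables $w_1,\ldots,w_n$ and a single top basic equation $y=f(w_1,\ldots,w_n)$; hence $\theta_1(t(\bar x))=y$ is logically equivalent to
\[ \exists \bar w.\; \Big(\bigwedge_{i=1}^n \theta_1(t_i(\bar x))=w_i\Big) \wedge \big(y=f(w_1,\ldots,w_n)\big). \]
Applying $\tau$ and using its compatibility with conjunction and with the external quantification, this translates into the conjunction of the $\tau(\theta_1(t_i(\bar x))=w_i)$ with $\tau(y)=\tau(f)(\tau(\bar w),\bar z)$. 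The induction hypothesis for each $t_i$ then replaces $\tau(\theta_1(t_i(\bar x))=w_i)$ by the equivalent equation $\tau(t_i(\bar x))=\tau(w_i)$. Eliminating the auxiliary existentials against these equations, that is substituting $\tau(t_i(\bar x))$ for $\tau(w_i)$, collapses the whole formula to $\tau(y)=\tau(f)(\tau(t_1(\bar x)),\ldots,\tau(t_n(\bar x)),\bar z)$, which by the first display is precisely $\tau(t(\bar x))=y$.

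The hard part will be the elimination of the auxiliary quantifiers. Since $\tau$ sends a variable $w_i$ not to a variable but to a term $\tau(w_i)(\bar z)$, one cannot literally quantify over $w_i$ in $\Lambda_2$; instead I would check that the external existentials range over the genuine $\Lambda_2$-variables occurring in $\tau(w_i)$, and that the equation $\tau(w_i)=\tau(t_i(\bar x))$ does license exactly the substitution already built into the compositional clause for $\tau(f)$. One must also verify that the single parameter string $\bar z$ threads uniformly through every clause, so that renaming the auxiliary variables apart causes no capture. Granting this bookkeeping, which is exactly what the hypotheses on $\tau$ supply, the induction closes and the claimed logical equivalence follows.
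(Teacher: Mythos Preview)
Your proposal is correct and follows exactly the approach the paper indicates: the paper's entire proof is the single line ``Structural induction yields the following'' preceding the Fact, and you have supplied the details of that induction, including correctly flagging the one genuine bookkeeping point (that $\tau$ sends auxiliary variables to terms rather than variables, so the elimination of the existentials must go through the compositional clause for $\tau(f)$ rather than naive variable-for-variable substitution).
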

In view of Fact~\ref{ured},
as an immediate consequence one obtains the following for
classes  $\mc{A}_i$ of algebraic structures in the signature
of $\Lambda_i$.

\begin{fact}\lab{trans1}
If $\tau$ restricts to a reduction of {\rm REF}$_{\mc{A}_1}$
to {\rm REF}$_{\mc{A}_2}$ then  $\theta_2 \circ \tau$ 
yields a p-time reduction of {\rm uREF}$_{\mc{A}_1}$
to {\rm uREF}$_{\mc{A}_2}$. 
\end{fact}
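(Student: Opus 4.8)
The plan is to verify the two requirements on a reduction separately---polynomial running time and preservation of the answer---chaining the two Facts immediately preceding the statement together with Fact~\ref{ured}. Throughout, an instance of {\rm uREF}$_{\mc{A}_1}$ is a pair of unnested terms $T(\bar x),S(\bar x)$ of $\Lambda_1$, and the proposed output is the pair $\theta_2(\tau(T)),\theta_2(\tau(S))$ of unnested terms of $\Lambda_2$.

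For the time bound I would apply $\tau$ directly to the unnested terms $T$ and $S$: by the first of the two preceding Facts (p-time translation of unnested terms), $\tau(T)$ and $\tau(S)$ are terms of $\Lambda_2$ produced in p-time. Applying $\theta_2$ and invoking Fact~\ref{ured}, the unnested terms $\theta_2(\tau(T))$ and $\theta_2(\tau(S))$ have length linear in those of $\tau(T),\tau(S)$ and are again produced in p-time; hence $\theta_2\circ\tau$ is p-time. The point of translating the unnested terms directly, rather than first expanding them into ordinary terms of $\Lambda_1$, is precisely to keep this step polynomial.

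For correctness I must show that there exist $A\in\mc{A}_1$ and $\bar a$ with $A\models T(\bar a)\neq S(\bar a)$ if and only if there exist $B\in\mc{A}_2$ and $\bar b$ with $B\models\theta_2(\tau(T))(\bar b)\neq\theta_2(\tau(S))(\bar b)$. I would establish this by a chain of logical equivalences that replaces each term by a logically equivalent one, using that {\rm REF} and {\rm uREF} depend only on the term functions involved. First, by Fact~\ref{ured} the unnested term $\theta_2(u)$ and the term $u$ define the same function, so the right-hand side is equivalent to the existence of $B\in\mc{A}_2$ and $\bar b$ with $\tau(T)(\bar b)\neq\tau(S)(\bar b)$. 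Next, by the existence half of Fact~\ref{ured}, choose ordinary terms $t,s$ of $\Lambda_1$ with $\theta_1(t)=T$ and $\theta_1(s)=S$ (needed only for the argument, not computed); then $t,s$ have the same term functions as $T,S$, so the left-hand side is equivalent to the existence of $A\in\mc{A}_1$ and $\bar a$ with $t(\bar a)\neq s(\bar a)$. The hypothesis that $\tau$ restricts to a reduction of {\rm REF}$_{\mc{A}_1}$ to {\rm REF}$_{\mc{A}_2}$ turns this into the existence of $B\in\mc{A}_2$ and $\bar b$ with $\tau(t)(\bar b)\neq\tau(s)(\bar b)$. Finally the second preceding Fact gives that $\tau(t)=y$ is logically equivalent to $\tau(\theta_1(t))=\tau(T)=y$, so $\tau(t)$ and $\tau(T)$, and likewise $\tau(s)$ and $\tau(S)$, define the same function, which closes the chain.

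The only real subtlety---and what I expect to be the main obstacle to a naive argument---is that efficiency and correctness proceed along different routes. One cannot obtain correctness simply by composing the term-level reduction $\tau$ with a computable map $T\mapsto t$, because the term $t$ with $\theta_1(t)=T$ furnished by Fact~\ref{ured} need not be polynomially bounded in $|T|$. The resolution is to translate $T$ directly, which is p-time, while using only the \emph{existence} of such a $t$, and then to reconcile the two routes at the level of term functions via the equivalence $\tau(t)=y\Leftrightarrow\tau(\theta_1(t))=y$. Once this alignment is in place, the remaining steps are the routine observation that logically equivalent terms are interchangeable in {\rm REF} and {\rm uREF}.
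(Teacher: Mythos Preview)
Your proposal is correct and follows precisely the route the paper indicates: the paper states Fact~\ref{trans1} as an ``immediate consequence'' of Fact~\ref{ured} and the two preceding Facts, without further argument, and you have spelled out exactly that chain---p-time from the first preceding Fact together with Fact~\ref{ured}, correctness from the second preceding Fact together with the existence clause of Fact~\ref{ured} and the hypothesis on~$\tau$. Your remark that the ordinary terms $t,s$ with $\theta_1(t)=T$, $\theta_1(s)=S$ are needed only for the correctness argument and not computed (so their possibly exponential size is harmless) is the right way to resolve the one subtlety in the otherwise routine verification.
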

\subsection{$*$-regular rings}\lab{ring1}

A $\ast$-\emph{ring} is a ring (with unit) having as additional
operation an
involution $a \mapsto a^*$. This involution is \emph{proper}
if  $aa^*=0$ only for $a=0$.
A ring  $R$ with proper involution is  $\ast$-\emph{regular}
if for any $a\in R$, there is  $x\in R$ such that $axa=a$;
equivalently, for any $a \in R$ 
there is a [Moore-Penrose]
\emph{pseudo-inverse}
(or Rickart relative inverse) $a^+ \in R$, that is 
\[a=aa^+a,\quad a^+=a^+aa^+,\quad(aa^+)^\ast=aa^+,\quad (a^+a)^\ast=a^+a\]
cf. \cite[Lemma 4]{Kap}.
In this case, $a^+$ is uniquely determined by $a$ 
and will be considered an additional unary
fundamental operation $q(a)=a^+$ of the $\ast$-regular ring $R$.
Thus, $*$-regular rings form a variety.
An element $e$ of a $\ast$-regular ring is a
\emph{projection} if $e=e^2=e^\ast$.
For such $e$, one has $e=e^+$; 
also, each $aa^+$ is a projection.

\begin{lem}~\lab{nest1}
Within the class of $*$-regular
rings with pseudo-inversion, any conjunction  of equations
is equivalent to a single one of the form $t=0$, to be obtained in p-time.
\end{lem}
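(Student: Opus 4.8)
The plan is to reduce an arbitrary finite conjunction of equations over $*$-regular rings (with pseudo-inversion $q$) to a single equation $t=0$, mirroring the two moves used for MOLs in Fact~\ref{ol}. The first move turns one equation $s=t$ into a single equation of the form $w=0$; the second move collapses a conjunction $\bigwedge_i (w_i=0)$ into one equation. The key structural fact I would exploit is that the involution is \emph{proper}: $aa^*=0$ if and only if $a=0$. This is exactly the device that lets an equation be witnessed by the vanishing of a single ``squared norm'' term, analogous to $\sum_i p_i^2=0$ over $\mathbb{R}$ used in Subsection~\ref{out}.

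For the first move, given an equation $s(\bar x)=t(\bar x)$, set $w:=s-t$, so the equation is equivalent to $w=0$. By properness, $w=0$ iff $w^*w=0$. Thus $s=t$ is equivalent to the single equation $(s-t)^*(s-t)=0$, which already has the desired shape $t'=0$ with $t'=(s-t)^*(s-t)$. For the second move, suppose I have equations $w_1=0,\ldots,w_m=0$ after applying the first move to each original equation. I want a single term whose vanishing forces all $w_i=0$. The natural candidate is $\sum_{i=1}^m w_i^* w_i=0$: since each summand $w_i^*w_i$ is of the form $a^*a$, properness gives that a sum of such ``positive'' elements vanishes only if each vanishes. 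Concretely, if $\sum_i w_i^*w_i=0$, then multiplying and using that in a $*$-regular ring each $w_i^*w_i$ behaves like a positive element, one shows each $w_i^*w_i=0$, hence each $w_i=0$; conversely each $w_i=0$ trivially gives the sum zero. I would phrase the ``positivity'' argument purely algebraically to stay within the variety: if $\sum_i w_i^* w_i = 0$, then for each fixed $j$ one has $w_j^* w_j = -\sum_{i\neq j} w_i^* w_i$, and I would argue that an element which is simultaneously of the form $a^*a$ and $-\sum b_i^*b_i$ must be $0$ using the projection/pseudo-inverse machinery and properness.

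The main obstacle I anticipate is precisely establishing this positivity-cancellation in an arbitrary $*$-regular ring, where there is no order relation a priori. The clean way around it is to observe that a finite conjunction over a variety can be encoded using the pseudo-inverse operation to pack several equations into one without appealing to order. Specifically, I would use that $e:=(w^*w)(w^*w)^+$ is a projection that equals $0$ exactly when $w^*w=0$, and that orthogonal sums of such projections can be assembled; alternatively, one embeds the finitely many $w_i$ as entries of a single element of a matrix $*$-ring and invokes that $*$-regularity is preserved, reducing $m$ equations to one over the original ring. I would aim to give the most elementary available route, namely the direct computation $\sum_i w_i^*w_i=0 \Rightarrow w_j^*w_j=0$, justified by the fact that in a proper $*$-ring a sum of elements of the form $a^*a$ is $0$ only if each is $0$ (a standard consequence of properness applied inductively after pre- and post-multiplying by suitable projections).

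Finally, the p-time claim needs attention: I would carry out the reduction at the level of \emph{unnested terms} rather than raw terms, exactly to avoid the exponential blowup warned against in Subsection~\ref{u2}. Each original equation contributes a bounded amount of new structure (one subtraction, one application of $*$, one multiplication), and the final summation adds $m-1$ additions over $m$ squared terms; since $*$, multiplication, and addition each appear with controlled multiplicity, the resulting single equation has length linear in the total input length, and the construction is clearly computable by a Turing machine in polynomial time. I would state the construction on unnested terms and appeal to Fact~\ref{ured} to transfer back, so that the resulting reduction is genuinely p-time as asserted.
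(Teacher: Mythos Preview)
Your central reduction---replacing $\bigwedge_i(w_i=0)$ by $\sum_i w_i^*w_i=0$---does not hold in arbitrary $*$-regular rings. A concrete counterexample: take $R=\mathbb{C}$ with the \emph{identity} as involution. This is commutative, so $(ab)^*=b^*a^*$; it is a field, hence (von Neumann) regular with $a^+=a^{-1}$ for $a\neq 0$; and the involution is proper since $aa^*=a^2=0$ forces $a=0$. Yet $1^*\cdot 1 + i^*\cdot i = 1 + i^2 = 0$ while neither $1$ nor $i$ vanishes. So ``a sum of elements of the form $a^*a$ is $0$ only if each is $0$'' is simply false at this level of generality, and the inductive pre/post-multiplication sketch you allude to cannot rescue it. Your fallback suggestions (packing the $w_i$ into a matrix ring, or ``orthogonal sums of projections'') either leave the original signature or are not worked out; in particular, the matrix embedding does not yield a term over the given ring.

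The paper's proof avoids any positivity argument. It first observes that if $e$ is idempotent and $er=1$ then $e=e(er)=e^2r=er=1$; hence a product of idempotents equals $1$ only if each factor does. Then, writing each equation as $t_i=0$, one sets
\[
t \;:=\; 1 - \prod_i\bigl(1 - t_i t_i^+\bigr).
\]
Each $1-t_it_i^+$ is idempotent (being $1$ minus a projection), so $t=0$ iff every $1-t_it_i^+=1$, i.e.\ $t_it_i^+=0$, i.e.\ $t_i=t_it_i^+t_i=0$. This uses pseudo-inversion essentially (to manufacture the projections $t_it_i^+$), whereas your sum-of-squares move used only $*$; the example above shows that $*$ alone is not enough. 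Your remarks on keeping the construction p-time via unnested terms are fine and apply equally well to the product form.
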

\begin{proof}
Let $R$ be any $*$-regular ring. If $e^2=e$ and $1=er$ for some
$r$ then $e=e^2r=er=1$.  Thus, by induction, if $\prod_{i=1}^n e_i=1$ with idempotents $e_i$ then $e_i=1$ for all $e_i$.
Now,
the given equations may be assumed of the form $t_i(\bar x)=0$.
Put $t(\bar x)=1-\prod_i (1- t_it^+_i)$.
Then $t(\bar a)=0$ if and only if $1-t_it_i^+(\bar a)=1$ 
for all $i$, that is $t_it_i^+(\bar a)=0$ which means  $t_i(a)=0$.
\end{proof}

The endomorphisms of a finite dimensional 
Hilbert space $H$ over $\mathbb{F}\in\{\mathbb{R},\mathbb{C}\}$  form a 
\qr  
$\End(H)$ where $f^*$ is the adjoint of $f$ and
where the projections
are the orthogonal projections $\pi_U$  onto subspaces $U$.
Moreover, 
 $f^+$ 
is given by $f^+|_{W^\perp}=0$ 
and $f^+|_W:W \ra U$ being 
 the inverse of $f|_{U}\colon U\to W$
  where $U=(\ker f)^\perp$  and
$W= \im f$. $\End(H)^+$ will denote
$\End(H)$ endowed with this additional operation.

\subsection{Ortholattices of projections}\lab{ring3}

 The projections of a $\ast$-regular
 ring $R$  form a
modular ortholattice $\La(R)$ where the partial order
is given by $e \leq f \Leftrightarrow fe=e \Leftrightarrow ef=e$,
least and greatest elements as $0$ and $1$, 
 join  and meet   as
\[ e \cup f=
 f+ (e(1-f))^+ e (1-f),\quad e \cap f = (e^\perp\cup f^\perp)^\perp    \] 
and the  orthocomplement as  $e^\perp=1-e$.
Moreover, for a finite dimensional Hilbert space $H$,
  $e \mapsto {\sf im}\,e$ is an isomorphism  of $\La(\End(H))$
onto $\La(H)$. Thus, associating with each  ortholattice variable $x$
a ring variable $\hat{x}$ and replacing each occurrence of $x$ by
 $\hat{x}\hat{x}^+$ one obtains an (EXP-time) 
interpretation of $\La(R)$ within $R$, uniformly for
all $*$-regular rings.

\subsection{Capturing pseudo-inverse}\lab{ring4}
Recall from Subsections~\ref{onframe} and \ref{adjoints} 
 the concept of an ON-$3$-frame $\bar a$
in $\La(H)$ and its associated 
coordinate ring $R(\bar a)$ with involution $r \mapsto r^{\dagger_{\bar a}}$,
isomorphic to $\End(a_1)$ as a $\ast$-ring via $\omega_{\bar a}$.
We have to capture  the additional operation
$f \mapsto f^+$ of pseudo-inversion on $\End^+(H_1)$,
$H_1$ a subspace of $H$ such that $\dim H \geq 3 \dim H_1$.
Indeed, for
any ON-$3$-frame $\bar a$ of $\La(H)$  (and there is one such that $a_1=H_1$)
 and for any  $r=\omega_{\bar a} f$, $f\in \End(a_1)$, 
one has $ {\sf ker }f={\sf ker}(r,\bar a)$
and  ${\im }f  = {\sf im}(r,\bar a)$ where 
 \[ {\sf ker}(x,\bar z):= 
x \cap z_1 \mbox{ and } 
{\sf im}(x,\bar z)  := 
((x+z_1)\cap z_2 +z_{12})\cap z_1.\]
Indeed, $\omega_{\bar a}(f)\cap a_1=\{v -\vep_{\bar a}f(v)\mid v \in a_1,\, \vep_{\bar a}f(v)=0\}=\{v\in a_1\mid f(v)=0\}$
and $(\omega_{\bar a}(f)+a_1)\cap a_2= \{ \vep_{\bar a}f(v)\mid v \in a_1\}$ 
 whence one has $\im(\omega_{\bar a}(f),\bar a)
= \{x - \vep_{\bar a}(x) +\vep_{\bar a}f(v)\mid x,v \in a_1\}\cap a_1
 = \{x \in a_1\mid \exists v \in a_1.\,\vep_{\bar a}(x) =\vep_{\bar a}f(v)\}
=\im f$.

Thus, with $s(x,\bar z)$ from Fact~\ref{inverse} 
one obtains  an  ortholattice term capturing pseudo-inversion, uniformly for all $H_1 \in \mc{H}$.  Define
 \[ \begin{array}{lcl}
\im^+(x,\bar z) &:=&(z_1\cap \ker(x,\bar z)^\perp+z_{12})\cap (z_1+z_2)\\   
 \psi_0(x,\bar z)& :=&s(x,\bar z) \cap (\im(x,\bar z) + \im^+(x,\bar z))\\  
\psi(x,\bar z)&:=& \psi_0(x,\bar z) +z_1\cap ({\sf im}(x,\bar z))^\perp.
\end{array}\]
\begin{lemma}
For
each  ON-$3$-frame $\bar a$ in $\lt(H)$, $H \in \mc{H}$, 
and all  $f \in \End(a_1)$ one has 
 $\omega_{\bar a}(f^+)=\psi(\omega_{\bar a}(f),\bar a)$.
\end{lemma}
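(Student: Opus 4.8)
\noindent The plan is to verify the term identity directly in the model $\lt(H)$, by evaluating every subterm of $\psi$ at $r:=\omega_{\bar a}(f)$ as an explicit subspace of $a_1+a_2$ and then reassembling. Recall first the Hilbert-space structure of the Moore--Penrose inverse: writing $W:=\im f$ and $U:=(\ker f)^\perp\cap a_1$ (the orthocomplement of $\ker f$ taken inside $a_1$), the restriction $f|_U\colon U\to W$ is an isomorphism, and $f^+$ acts as $(f|_U)^{-1}$ on $W$ and as $0$ on $W^\perp\cap a_1=\ker f^+$. Accordingly $\omega_{\bar a}(f^+)=\{v-\vep^{\bar a}_{12}f^+(v)\mid v\in a_1\}$ is the $\Gamma^{\bar a}_{12}$-graph of $f^+$, and the goal is to recover exactly this graph from $\psi$.

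First I would record the evaluations of the atomic subterms. By the identities established just above the lemma, $\ker(r,\bar a)=\ker f$ and $\im(r,\bar a)=\im f=W$. By Fact~\ref{inverse}, $s(r,\bar a)=\{f\vep^{\bar a}_{21}(w)-w\mid w\in a_2\}$, which after the substitution $w=\vep^{\bar a}_{12}(v)$ becomes the \emph{preimage graph} $s(r,\bar a)=\{f(v)-\vep^{\bar a}_{12}(v)\mid v\in a_1\}$. Since $\ker f\subseteq a_1$, the meet $z_1\cap\ker^\perp$ evaluates to $(\ker f)^\perp\cap a_1=U$, and likewise $z_1\cap\im^\perp$ evaluates to $W^\perp\cap a_1=\ker f^+$. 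The remaining point is to check that $\im^+$ evaluates, inside $a_2$, to the perspective image $\vep^{\bar a}_{12}(U)\subseteq a_2$ of $U$ carried through the axis $a_{12}$ (so that $\im+\im^+=W\oplus\vep^{\bar a}_{12}(U)$); this is the computation $(U+a_{12})\cap z_2=\vep^{\bar a}_{12}(U)$, justified by modularity.

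Then I would assemble. The subspace $s(r,\bar a)$ has $a_1$-component always in $W$, and its fibre over a given $w\in W$ is the coset $-\vep^{\bar a}_{12}(f^{-1}(w))$ of $\ker f$. Meeting with $\im+\im^+$ constrains the $a_2$-component to lie in $\vep^{\bar a}_{12}(U)$, which singles out the \emph{unique} preimage of $w$ lying in $U$, namely $(f|_U)^{-1}(w)$. Hence $\psi_0(r,\bar a)=\{w-\vep^{\bar a}_{12}(f|_U)^{-1}(w)\mid w\in W\}$, i.e.\ precisely the $\Gamma^{\bar a}_{12}$-graph of $f^+|_W$. Finally, adjoining $z_1\cap\im^\perp=\ker f^+$, on which $f^+$ vanishes, completes the graph over the complementary summand $W^\perp\cap a_1$, giving $\psi(r,\bar a)=\{v-\vep^{\bar a}_{12}f^+(v)\mid v\in a_1\}=\omega_{\bar a}(f^+)$.

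The main obstacle is the middle meet $\psi_0=s\cap(\im+\im^+)$: one must show that intersecting the full (multi-valued) preimage graph $s$ with the range condition ``$a_2$-component in $\vep^{\bar a}_{12}(U)$'' collapses it to a genuine single-valued graph, and that this graph is $(f|_U)^{-1}$ rather than the inverse along some other complement of $\ker f$. This is exactly where the choice $U=(\ker f)^\perp$ --- and hence the orthogonality carried by $\perp$ in the ON-$3$-frame --- is indispensable; single-valuedness can be confirmed by the rank bookkeeping $\dim U=\dim W=\mathrm{rank}\,f$. A secondary technical point is verifying that $\im^+$ really lands inside $a_2$ as $\vep^{\bar a}_{12}(U)$, since an oversized evaluation there would destroy the collapse in the first step.
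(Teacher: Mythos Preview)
Your proposal is correct and follows the same line as the paper's own proof: evaluate each subterm of $\psi$ at $r=\omega_{\bar a}(f)$, identify $\psi_0(r,\bar a)$ as the partial graph $\{v_2-\vep_{\bar a}f^+(v_2)\mid v_2\in\im f\}$, then adjoin $(\im f)^\perp\cap a_1=\ker f^+$ to recover $\omega_{\bar a}(f^+)$. Your reading of $\im^+$ as $(U+a_{12})\cap a_2=\vep_{\bar a}(U)$ matches what the paper's proof tacitly uses (it writes ``$\im^+_2$''), even though the displayed definition literally has $\cap(z_1+z_2)$; with that reading your meet computation for $\psi_0$ is exactly the paper's argument.
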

\begin{proof}
Put $r=\omega_{\bar a}(f)$.
Then $\im^+_2(r,\bar a)= \vep^{ \bar a}_{12}(\im f^+) $ and,
in view of Fact~\ref{inverse}, 
$\psi_0(r,\bar a)$ consists of the
$f\vep_{\bar a}^{-1}(w) -w$ where 
$ w \in a_2$ and $w=\vep_{\bar a}f^+(v_2)$ 
for some $v_2\in \im f$ and 
$ f \vep_{\bar a}^{-1}(w)= f(v_1)$ for some $v_1 \in \im f^+$. 
It follows that $f(v_1)= f  f^+(v_2)=v_2$
and $f \vep_{\bar a}^{-1}(w)-w= v_2- (\vep_{\bar a} f^+)(v_2)$,
 that is, 
$\psi_0(r,\bar a)= \{v_2 -\vep_{\bar a} f^+(v_2)\mid v_2 \in \im f\}$.
\end{proof}

\section{Complexity of the equational theory
of $\mc{R}$}
For  finite dimensional real or complex Hilbert spaces $H$,
let  $\End^+(H)$ denote the  endomorphism $*$-ring with pseudo-inversion;
let $\mc{R}$ denote the class of all these.
In analogy to Fact~\ref{neu}  we have the following.

\begin{fact}\lab{neu2}
For any  $\mc{C} \subseteq 
 \mc{R}$,  
the variety ${\sf V}(\mc{C})$
either is generated by one or two members of  $\mc{C}$
or it equals $\mc{R}$
and is generated by any family in 
$\mc{R}$
  having
unbounded dimensions.  
\end{fact}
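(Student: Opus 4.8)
The plan is to mirror the proof of Fact~\ref{neu}, replacing the three ortholattice embedding facts used there by their counterparts for $\ast$-rings with pseudo-inversion. Since these structures form a variety (Subsection~\ref{ring1}), membership in ${\sf V}(\mc{C})$ is governed by ${\sf HSP}$, and the task reduces to locating each $\End^+(H)$, $H \in \mc{H}$, inside the ${\sf HS}$-closure of suitable members of $\mc{C}$. Up to isometric isomorphism there is exactly one real and one complex Hilbert space of each finite dimension, so each member of $\mc{R}$ is determined by a field $\mathbb{F}\in\{\mathbb{R},\mathbb{C}\}$ and a dimension $d$.

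First I would record three embedding lemmas. \emph{(a) Orthogonal sums}: if $H=U\oplus W$ with $U\perp W$, then $(f,g)\mapsto f\oplus g$ embeds $\End^+(U)\times\End^+(W)$ into $\End^+(H)$, and projection onto the first factor exhibits $\End^+(U)$ as a homomorphic image; hence $\End^+(H_1)\in{\sf HS}(\End^+(H_2))$ for $H_1,H_2$ over the same field with $\dim H_1\le\dim H_2$. \emph{(b) Complexification}: $f\mapsto\id_{\mathbb{C}}\otimes f$ embeds $\End^+(H)$ into $\End^+(\mathbb{C}\otimes_{\mathbb{R}}H)$ for real $H$. \emph{(c) Realification}: viewing complex $H$ as the real space $H_0$ with inner product $\mathrm{Re}\langle\cdot,\cdot\rangle$, every complex-linear endomorphism is real-linear, giving an embedding $\End^+(H)\hookrightarrow\End^+(H_0)$ with $\dim_{\mathbb{R}}H_0=2\dim_{\mathbb{C}}H$. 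In each case the ring and involution structure is clearly preserved; the one point needing verification is that the unary operation of pseudo-inversion is respected. Because the Moore--Penrose inverse is \emph{uniquely} determined by its four defining equations, it suffices to check that kernels, images, and orthogonal complements behave correctly: in (a) they split along $U\oplus W$, so $(f\oplus g)^+=f^+\oplus g^+$; in (b) they complexify; and in (c) one uses that for a complex subspace the real and complex orthogonal complements coincide (if $\mathrm{Re}\langle x,y\rangle=0$ for all $x$ in a complex subspace, then replacing $x$ by $ix$ gives $\mathrm{Im}\langle x,y\rangle=0$ as well). This verification is the main, and essentially the only, obstacle; everything else is bookkeeping.

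With (a)--(c) in hand I would split into two cases. If the dimensions occurring in $\mc{C}$ are bounded, let $d_r$ and $d_c$ be the largest real and complex dimensions present (omit one if the corresponding field does not occur). By (a), every real member of $\mc{C}$ lies in ${\sf HS}(\End^+(H_{\mathbb{R},d_r}))$ and every complex member in ${\sf HS}(\End^+(H_{\mathbb{C},d_c}))$, so ${\sf V}(\mc{C})$ is generated by these (at most) two algebras. Moreover, when $d_r\le d_c$, combining (b) with (a) places $\End^+(H_{\mathbb{R},d_r})$ in ${\sf V}(\End^+(H_{\mathbb{C},d_c}))$, so a single member then suffices; this gives ``one or two'' in every bounded case.

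If the dimensions in $\mc{C}$ are unbounded, then, since each member is real or complex, at least one field occurs in unbounded dimension; say the real dimensions are unbounded (the complex case is symmetric via (c) and (b)). Then by (a) every $\End^+(H_{\mathbb{R},d})$ lies in the ${\sf HS}$-closure of a sufficiently large real member of $\mc{C}$, and by (c) every $\End^+(H_{\mathbb{C},d})$ embeds into $\End^+(H_{\mathbb{R},2d})$, which is likewise captured. Thus every member of $\mc{R}$ belongs to ${\sf V}(\mc{C})$, so ${\sf V}(\mc{C})={\sf V}(\mc{R})$; as this argument used only that some field appears in unbounded dimension, any family in $\mc{R}$ of unbounded dimensions generates the full variety, as claimed.
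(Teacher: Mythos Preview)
Your proposal is correct and follows exactly the approach the paper intends: the paper gives no separate proof of Fact~\ref{neu2}, only the remark ``In analogy to Fact~\ref{neu}'', and you have carried out precisely that analogy, supplying the three embedding lemmas (orthogonal-sum splitting, complexification, realification) in the $\ast$-ring-with-pseudo-inversion setting and then running the same bounded/unbounded dichotomy. Your extra care in verifying that the Moore--Penrose inverse is preserved under each embedding---via uniqueness and the compatibility of kernels, images, and orthogonal complements---is the one point the paper leaves implicit, and your argument for it is sound.
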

We mention that 
${\sf V}(\mc{R})$ contains all 
finite Rickart $C^*$-algebras  \cite[Theorem 2]{hs}.
According to \cite[Theorem 22]{SAT}, SAT$_{\mr}$ is undecidable.
Our main result is as follows.
\begin{theorem}\lab{eqring} The decision problems
{\rm REF}$_{\lh}$, {\rm uREF}$_{\lh}$, 
  {\rm REF}$_{\mr}$,  {\rm uREF}$_{\mr}$,  and 
$\rm{FEAS}_{\mathbb{Z},\mathbb{R}}$
are pairwise  p-time equivalent;
in particular, the equational theory of   ${\mr}$
is decidable.
 \end{theorem}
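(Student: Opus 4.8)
The plan is to extend the chain of p-time equivalences already furnished by Theorem~\ref{thm}, which ties together {\rm REF}$_{\lh}$, {\rm uREF}$_{\lh}$, and \fs, by inserting the two ring problems {\rm REF}$_{\mr}$ and {\rm uREF}$_{\mr}$. Two points are routine. First, {\rm REF}$_{\mr}$ and {\rm uREF}$_{\mr}$ are p-time equivalent: the unnesting map of Fact~\ref{ured}, read in the signature of $*$-regular rings with pseudo-inversion, reduces {\rm REF}$_{\mr}$ to {\rm uREF}$_{\mr}$, while the reverse inclusion is immediate; and by Lemma~\ref{nest1} a conjunction of equations over $\mr$ collapses to a single one, so that each problem concerns a single term $t$ and the question whether $t\neq 0$ can be satisfied in some $\End^+(H)$. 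Second, decidability is a consequence rather than a separate task: \fs is decidable by Tarski's procedure for the reals, hence so is every problem p-time equivalent to it, and since the decision problem for ${\sf Eq}(\mr)$ is the p-time complement of {\rm REF}$_{\mr}$, it too is decidable.

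It thus remains to run one reduction in each direction between the ring side and the established chain. For the reduction of \fs to {\rm REF}$_{\mr}$ (Lemma~\ref{lu1}) I would carry out the Spectral Theorem argument of Subsection~\ref{thm2} directly inside $\End^+(H)$ rather than inside a coordinate ring. Writing $p=\sum_k p_k^2$, so that $p$ has a real zero exactly when the $p_k$ have a common one, I attach to each ring variable $x_i$ a self-adjoint element and then force the resulting operators to commute; here the needed retractions are assembled from the fundamental operations of the ring itself (for instance $x\mapsto (1+1)^+(x+x^*)$ retracts onto the self-adjoint part), so none of the frame retractive terms of Lemma~\ref{retr} are required, which is why this direction is the shorter one. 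Non-invertibility of the resulting $p(\bar s)$ then shows up as $1-p(\bar s)\,p(\bar s)^+\neq 0$, and by the Spectral Theorem this is attainable over some $H$ precisely when $p$ has a real zero.

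For the reduction of {\rm REF}$_{\mr}$ to {\rm uREF}$_{\lh}$ (Subsection~\ref{uref}) I would interpret the ring in the coordinate ring of an ON-$3$-frame. By Lemma~\ref{retr} one passes, via retractive terms, to an ON-$3$-frame $\bar a$ together with elements of $R(\bar a)\cong\End^*(a_1)$; since any space occurs as the $a_1$ of such a frame in some $H$ with $\dim H=3\dim a_1$, a refutation in $\End^+(a_1)$ is matched by one in $\La(H)$. The ring operations are rendered by ortholattice terms as assembled in Section~\ref{ring}: addition and multiplication from Subsection~\ref{coo}, the involution by $x\mapsto x^{\dagger_{\bar a}}$ of Lemma~\ref{adj}, and pseudo-inversion by the term $\psi$ of Subsection~\ref{ring4}. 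This is exactly the translation set-up of Subsection~\ref{trans}, so the map $\tau$ so defined is a (not necessarily polynomial) reduction of {\rm REF}$_{\mr}$ to {\rm REF}$_{\lh}$.

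The step I expect to be the genuine obstacle --- and the reason the reduction targets {\rm uREF}$_{\lh}$ rather than {\rm REF}$_{\lh}$ --- is the size of this last translation. The term $\psi(x,\bar z)$ capturing pseudo-inversion, like the coordinate-ring operation terms, contains several occurrences of its argument, so substituting it into a deeply nested ring term would compound those occurrences and blow the translate up exponentially; indeed it remains open whether {\rm REF}$_{\mr}$ reduces to {\rm REF}$_{\lh}$ in polynomial time. The remedy is exactly the content of Fact~\ref{trans1}: translating only \emph{unnested} ring terms, where each fundamental operation is applied once to distinct variables, the repeated occurrences inside $\psi$ enlarge only a block of bounded size, so $\theta_2\circ\tau$ is a p-time reduction of {\rm uREF}$_{\mr}$ to {\rm uREF}$_{\lh}$. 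Composing
\[{\rm REF}_{\mr}\;\equiv\;{\rm uREF}_{\mr}\;\to\;{\rm uREF}_{\lh}\;\equiv\;{\rm FEAS}_{\mathbb{Z},\mathbb{R}}\;\to\;{\rm REF}_{\mr}\]
closes the cycle and, together with Theorem~\ref{thm}, yields the asserted pairwise equivalence; decidability follows as noted above.
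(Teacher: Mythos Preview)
Your proposal is correct and follows essentially the same route as the paper: \fs\ is reduced to {\rm REF}$_{\mr}$ by the Spectral-Theorem argument of Lemma~\ref{lu1} (carried out inside $\End^+(H)$ without frame retractions), while {\rm REF}$_{\mr}$ is reduced via {\rm uREF}$_{\mr}$ to {\rm uREF}$_{\lh}$ by the translation $\tau$ of Subsection~\ref{trans} and Fact~\ref{trans1}, the unnested detour being necessary precisely because of the multiple occurrences of $x$ in $\psi(x,\bar z)$; this closes the cycle through Theorem~\ref{thm}. You have also correctly isolated the open question (a direct p-time reduction of {\rm REF}$_{\mr}$ to {\rm REF}$_{\lh}$) exactly where the paper flags it.
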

Decidability of the equational theory is also shown in unpublished joint work with
Marina Semenova by reduction to 
decidability of the reals.
\subsection{Outline}\lab{out2}
The reduction from
$\rm{FEAS}_{\mathbb{Z},\mathbb{R}}$ to  {\rm REF}$_{\mr}$ 
is established in  Subsection~\ref{u1},
directly. 
Concerning the reduction in the converse direction,
recall that the reduction of REF$_{\lh}$ to $\rm{FEAS}_{\mathbb{Z},\mathbb{R}}$ 
relied on the fact that $t(\bar x)=0$ fails in $\mc{L}$
if it does so in some $\lt(H)$, $\dim H$ polynomially bounded
by the length of $t(\bar x)$. We make use of this,
reducing via REF$_\mc{L}$.  Though, expressing pseudo-inversion within
the coordinate ring of an ON-$3$-frame requires multiple
occurrences of the principal variable causing exponential
blowup when translating terms.
Therefore, we reduce 
{\rm REF}$_{\mc{R}}$ via
{\rm uREF}$_{\mc{R}}$   to   {\rm uREF}$_{\mc{L}}$  in Subsection~\ref{uref}.
The Theorem then follows with 
Subsection~\ref{feas}.

A term for  quasi-inversion is essential
for the above hardness result and similar ones in 
fixed finite dimension.
Considering polynomial identities
for matrix rings in fixed dimension, lower bounds on
proof complexity have been established by Tzameret et al.,
cf.  \cite{tza}.

 \subsection{Reduction from $\rm{FEAS}_{\mathbb{Z},\mathbb{R}}$
to ${\rm REF}_\mr$}\lab{u1}
Recall from Subsection~\ref{ring1} that, for any $f \in \End(H)$,
 $ff^+$ is a projection
such that $\im f=\im ff^+$.
Thus, according to Subsection~\ref{ring3},
there is a binary term $x\cap y$ in the language of $*$-rings
with pseudo-inversion
such that
$f\cap g$ is the orthogonal projection onto 
$\im f \cap \im g$,  for all $f,g \in \End(H)$ and $H \in \mc{H}$.
Similarly, the term $k(x):=1- x^{*}x^{*+}$ 
is such that $k(f)$ is the orthogonal projection
onto $\ker f = (\im f^*)^\perp$.

\begin{lemma}\lab{lu1}
For all $\mc{C}\subseteq \mc{R}$, containing a non-zero member,
there is a uniform p-time reduction 
of ${\rm FEAS}_{\mathbb{Z},\mathbb{R}}$
to ${\rm REF}_{\mc{C}}$.
\end{lemma}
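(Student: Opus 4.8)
The plan is to realize a real common zero as a tuple of scalar operators and to detect the existence of such a zero by testing invertibility through pseudo-inversion, the Spectral Theorem converting real solvability of the polynomial system into non-invertibility of an associated self-adjoint operator. First I would reduce a given instance $p_1(\bar x),\ldots,p_m(\bar x)$ of \fs to a single polynomial: since we work over $\mathbb{R}$ and a self-adjoint operator has real spectrum, the $p_k$ have a common real zero if and only if $p:=\sum_k p_k^2$ does. By Lemma~\ref{nest1} it then suffices to produce, in p-time, a single term whose vanishing as an identity in $\mc{C}$ is equivalent to unsolvability; I will use $1-w\,w^+$ with $w$ the term constructed below, so that the refuting inequation reads $w\,w^+\neq 1$, i.e.\ ``$w$ is not invertible''.

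The core construction builds from arbitrary ring variables $\bar x$ a tuple $\bar b(\bar x)=(b_1,\ldots,b_n)$ of terms in the language of $*$-rings with pseudo-inversion such that, for every $H\in\mc{H}$ and every assignment in $\End^+(H)$, the $b_i$ are \emph{pairwise commuting self-adjoint} operators, while every tuple of real scalar operators $\rho_i\,\id$ is left fixed. The point is that, by Subsection~\ref{ring3}, the projection ortholattice operations — meet of images $x\cap y$, the kernel projection $k(x)=1-x^*x^{*+}$, and orthocomplementation $1-x$ — are already available as ring terms, so the retractive passage to commuting self-adjoint coordinates carried out lattice-theoretically in Subsection~\ref{thm2} (Lemma~\ref{retr} and the terms $s_i$) can be performed \emph{directly} inside the ring, with no von Neumann frame to coordinatize. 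Setting $w:=p(\bar b(\bar x))$, the element $w$ is a polynomial with integer coefficients in commuting self-adjoints, hence itself self-adjoint.

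Then the two directions follow from the Spectral Theorem. If the system has no real zero, then for every $H$ and every assignment the commuting self-adjoint family $\bar b$ has a joint eigenbasis, on which $w$ acts by the real values $p(\bar\rho)\neq 0$; thus $w$ is invertible, $w\,w^+=1$, and the identity $1-w\,w^+=0$ holds throughout $\mc{R}$, hence in every member of $\mc{C}$. Conversely, if $\bar\rho^0\in\mathbb{R}^n$ is a common zero, pick any non-zero $\End^+(H)\in\mc{C}$ and the scalar operators $a_i=\rho_i^0\,\id$; these are commuting self-adjoint, hence fixed by the forcing, so $w=p(\bar\rho^0)\,\id=0$ and $w\,w^+=0\neq 1$. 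This exhibits a refutation, so the reduction is correct; it is uniform in $\mc{C}$ and needs only a single non-zero member (the defining identity holds throughout $\mc{R}\supseteq\mc{C}$, while the refutation is produced inside that one member), and it is visibly polynomial time since $p$, the forcing terms, and $w\,w^+$ are all assembled in p-time.

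The hard part will be the forcing step. Making the $b_i$ \emph{always} commute is essential, because without it the negative direction fails: already for $n=2$ the symmetric matrices $y_1=\left(\begin{smallmatrix}0&1\\1&0\end{smallmatrix}\right)$ and $y_2=\left(\begin{smallmatrix}1&1\\1&-1\end{smallmatrix}\right)$ satisfy $y_1^2+(y_1y_2-1)^2=0$, although $x_1=0,\ x_1x_2-1=0$ has no real solution; hence merely replacing each $x_i$ by its self-adjoint part $\tfrac12(x_i+x_i^*)$ is not enough, as such $y_i$ are their own self-adjoint parts. The uniform commutativity-forcing, together with the verification that it fixes scalar tuples, is exactly the ingredient imported from Subsection~\ref{thm2}; once it is in place, the Spectral Theorem does the rest.
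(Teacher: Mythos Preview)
Your proposal is correct and follows essentially the same strategy as the paper: force the inputs to be a commuting self-adjoint tuple so that the Spectral Theorem applies, then detect a real zero of $p$ as non-invertibility via pseudo-inversion. The only real difference is that the paper writes the forcing projection $q$ down explicitly---using auxiliary variables $\bar y$ and the kernel projections $k(2x_i-(y_i+y_i^*))$, $k(y_iy_j^*-y_j^*y_i)$ (together with $k(p(\bar x))$)---and takes $x_iq+(1-q)$ as the forced tuple, rather than importing the construction from Subsection~\ref{thm2}; your test $1-ww^+$ and the paper's $k(p(\ldots))$ coincide since $w$ is self-adjoint.
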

\begin{proof}
Given a multivariate polynomial $p(\bar x)$ with integer
coefficients, choose new variables $y_i$ and
define the following term
in the language of $*$-rings with pseudo-inversion
(considering $p(\bar x)$ a ring term):
\[ p^\circ(\bar x,\bar y):= k(p(x_1q +(1-q),\ldots, x_nq +(1-q))  \]
where
\[q:=q(\bar x,\bar y):=
k(p(\bar x)) \cap
\bigcap_i k(x_i+x_i-(y_i+y^*_i)) \cap
\bigcap_{i,j} k(y_iy_j^* -y_j^*y_i) \]  
and where $\bigcap$ stands for suitable iterations of $\cap$.
Observe that for any substitution $\bar x \mapsto \bar f$,
$\bar y \mapsto \bar g$ within $\End(H)$, the $f_iq(\bar f,\bar g)+(1-q(\bar f,\bar g))$
are self-adjoint, equal   to $\frac{1}{2}
(g_iq(\bar f,\bar g)+g^*_iq(\bar f,\bar g)) +
1-q(\bar f,\bar g)$, and commute pairwise. On the other hand, given any self-adjoint and pairwise commuting
$f_i=g_i$ one has $q(\bar f,\bar g)=\id$ and it follows that
 $p^\circ(\bar f,\bar g)$ is the projection onto 
${\sf ker}\,p(\bar f)$ and that   $p^\circ(\bar f,\bar g)=0$
if and only if $p(\bar f)$ is invertible.
Now, in view of the Spectral Theorem as in the proof of Theorem~\ref{thm},
given $H\in \mc{C}$, one has  
$p^\circ(\bar f,\bar g)= 0$ for all $\bar f,\bar g$ in $\End(H)$
if and only if $p(\bar x)$ 
has no zero in   $\mathbb{R}$. 
\end{proof}

\subsection{Reducing {\rm REF}$_{\mc{R}}$ via
{\rm uREF}$_{\mc{R}}$   to 
{\rm uREF}$_{\mc{L}}$  
 and proof of Theorem~\ref{eqring}}\lab{uref}  
In view of Subsections~\ref{coo}, \ref{onframe}, and \ref{ring4},
for each operation symbol
$g$ (with associated basic equation $y=g(\bar x)$)  in the language
of $\mc{R}$, there is an ortholattice term  $\hat{g}(\bar x,\bar z)$
such that for any $H \in \mc{H}$ and ON-$3$-frame
$\bar a$ of $\lt(H)$ 
one has, for all $f_i \in   \End(a_1)$ and 
$r_i=\omega_{\bar a} f_i$ in $R(\bar a)$,
\[\End(a_1) \models g(\bar f)=f_0\;\Leftrightarrow 
 \;R(\bar a)\models \hat{g}(\bar r,\bar a)=r_0.\]
Recall Lemma~\ref{retr} and
 the retractive terms $\bar a(\bar z)$ for ON-$3$-frames $\bar a$ 
and $r_i(x_i,\bar z)$ for  elements  of $R(\bar a)$.
For each operation symbol $g$ in the language of $\mc{R}$ define
\[\tau(g(\bar x)):= \hat{g}(\bar r(\bar x, \bar a(\bar z)),\bar a(\bar z)),  \]
where $\bar r(\bar x,\bar a(\bar z))$ is the string of the $r_i(x_i,\bar a(\bar z))$.  According to Subsection~\ref{trans},
this defines a translation $\tau$ from the language
of $\mc{R}$ to that of $\mc{L}$. 
 Since any $H_1 \in \mc{H}$
occurs as $a_1$  for some ON-$3$-frame  $\bar a$ of $\La(H)$,
for some extension  $H$ in $\mc{H}$,
$\tau$
provides a (EXP-time) reduction of the equational
theory of $\mc{R}$ to that of $\mc{L}$;
that is, of REF$_{\mc{R}}$ to REF$_{\mc{L}}$. 
Thus, by Fact~\ref{ured} and  Fact~\ref{trans1},
$\tau$ provides a p-time reduction from
REF$_{\mc{R}}$ (via
uREF$_{\mc{R}}$) to uREF$_{\mc{L}}$.

Together with Subsection~\ref{feas}
one obtains a p-time reduction from 
REF$_{\mc{R}}$ to {\fs} via uREF$_{\mc{R}}$ and uREF$_{\mc{L}}$. 
The converse reduction is  Lemma~\ref{lu1}. 
The reduction of {\fs} to REF$_{\lh}$ is Subsection~\ref{thm2}.

\subsection{Dimension bounds in {\rm REF}$_{\mc{R}}$ }
With Lemma~\ref{ldim}, Subsection~\ref{uref}
yields the following. 

\begin{corollary}
There is a polynomial  $p(x)$ 
such that an equation $t(\bar x)$
fails in $\mc{R}$ if and only if it does
so in $\End^+(H)$ for some $H$ with $\dim H\leq 
p(|t|)$. 
\end{corollary}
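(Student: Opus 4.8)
The plan is to transport the dimension bound of Lemma~\ref{ldim}, which is phrased on the ortholattice side, back across the translation of Subsection~\ref{uref} to the ring side. First I would invoke Lemma~\ref{nest1} to assume the given equation has the form $t(\bar x)=0$; its failure in $\mc{R}$ then means $t(\bar f)\neq 0$ for some assignment in some $\End^+(H_0)$, $H_0\in\mc{H}$. Unnesting $t$ by Fact~\ref{ured} and applying the translation $\tau$ of Subsection~\ref{uref} turns this into the failure of the single ortholattice equation $\tau(t)=\tau(0)$ in $\mc{L}$, the two being equivalent by the reduction established there. By Fact~\ref{ol} I rewrite this equation as $T_0(\bar x,\bar z)=0$ and by Fact~\ref{ured} pass to an unnested ortholattice term $T(\bar w)$, where $\bar w$ lists all its variables, with $o(T)=o(T_0)$. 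Since the whole passage from $t$ to $T$ is p-time (Fact~\ref{ured}, Fact~\ref{trans1}, Subsection~\ref{uref}), there is a fixed polynomial $p_0$ with $o(T)\le|T|\le p_0(|t|)$.

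Next I would apply Lemma~\ref{ldim} to $T$, in the real or complex version according to the type of $H_0$. Because $\tau(t)=\tau(0)$ fails in some $\lt(H)$, we have $\lt(H)\models\exists\bar w.\,T(\bar w)\neq 0$, and the lemma yields an $H'$ of the same type with $\dim H'=o(T)\le p_0(|t|)$ and $\lt(H')\models\exists\bar w.\,T(\bar w)\neq 0$. Thus the translated equation already fails in the single bounded-dimensional ortholattice $\lt(H')$.

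It remains to recover a ring counterexample from this without inflating the dimension. By Lemma~\ref{retr} the retractive terms built into $\tau$ carry any witnessing assignment in $\lt(H')$ to a genuine ON-$3$-frame $\bar a$ together with elements $r_i\in R(\bar a)$; since $\omega_{\bar a}$ is an isomorphism of $*$-rings with pseudo-inversion from $\End^+(a_1)$ onto $R(\bar a)$, the failure of $T_0=0$ becomes a failure of $t(\bar x)=0$ in $\End^+(a_1)$ for suitable $f_i\in\End(a_1)$. Crucially $a_1$ is the first leg of the frame, so $a_1$ is a subspace of $H'$ and, since $a_\top=a_1\oplus^\perp a_2\oplus^\perp a_3$, one even has $3\dim a_1\le\dim H'$. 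Taking $H''=a_1$ and $p:=p_0$ gives $\dim H''\le\dim H'\le p(|t|)$ with $t$ failing in $\End^+(H'')$, a member of $\mc{R}$; the converse direction is trivial. I expect the only genuinely delicate point to be this last step: one must verify that reconstructing a ring assignment from the ortholattice one descends exactly to the subspace $a_1$ carried by the forced frame, so that no dimension is created going back, while everything else is bookkeeping already prepared in Subsection~\ref{uref} and Lemma~\ref{ldim}.
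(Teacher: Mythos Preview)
Your argument is correct and is precisely the expansion of the paper's one-line proof, which simply invokes Lemma~\ref{ldim} together with the translation of Subsection~\ref{uref}. The only point worth noting is that the step you flag as delicate is indeed harmless: the retractive terms built into $\tau$ force any witnessing assignment in $\lt(H')$ to produce an ON-$3$-frame $\bar a$ with $a_1$ a subspace of $H'$, and non-triviality of $\bar a$ is automatic since otherwise $R(\bar a)$ collapses and the translated equation could not fail.
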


\section{The category of finite dimensional Hilbert spaces}
Finite dimensional Hilbert spaces have a prominent r\^{o}le 
in the approach to Quantum Mechanics in terms of Category Theory,
see  e.g. Abramsky and Coecke \cite{ab} and, for equational aspects,
Selinger \cite{sel}.

Let $\mc{H}$ denote the  additive category 
of  finite dimensional Hilbert spaces,
enriched by the contravariant functor of adjunction, cf. \cite{ab,heun,sel}.
Let $\mc{H}^+$ arise from $\mc{H}$ by endowing each endomorphism $*$-ring
$\End(H)$ also  with the operation of pseudo-inversion.
Both $\mc{H}$ and $\mc{H}^+$ shall be considered 
as two-sorted  partial  algebraic structures: 
one sort for objects, one for morphisms. Also, we require
unary operations $\delta$ and $\rho$ from morphisms
to objects yielding domain and codomain
as well as $\iota$ associating the identity on $H$
 with the object $H$. 
Also, we have the map $\omega$ associating with objects $H_1,H_2$
the zero map from $H_1$ to $H_2$.
Speaking about \emph{subcategories} we require
closure under the additive structure and these operations.

\emph{Terms} are built from morphism variables $x_i$
and expressions $\iota(v_i)$ and $\omega(v_i,v_j)$ with object
variables $v_i,v_j$,
subject to rules which grant evaluation in $\mc{H}$ respectively
$\mc{H}^+$. Namely, with each  term
$t$ we  associate  object
variables for domain and codomain,
denoted by $\delta(t)$ and $\rho(t)$. We require 
$\delta(\iota(v_i))=v_i=\rho(\iota(v_i))$, $\delta(\omega(v_i,v_j))
=v_i$, $\rho(\omega(v_i,v_j))=v_j$; moreover,
$t=t_2 \circ t_1$ is defined if and only if $\delta(t_2)=\rho(t_1)$
and then $\delta(t)=\delta(t_1)$ and $\rho(t)=\rho(t_2)$;
similarly, for the other symbols for operations on
the sort of morphisms. Compare \cite[Section 5]{sel}.

Defining  \emph{unnested terms} $T=(\phi_T,y_T)$
in analogy to Subsection~\ref{u2}, the  conditions
on $\delta$ and $\rho$  are included as conjuncts
into $\phi_T$.
An assignment $\gamma$  \emph{admissible} for
a term $t$  (unnested term $T$) 
assigns a morphism $\gamma(z)$
and an object $\gamma(v)$
 to each morphism  variable $z$ 
and object variable $v$  occurring in $t$ ($T$)
such that $\gamma(z) \in {\sf Hom}(\gamma(\delta(z)),\gamma(\rho(z))$
for each $z$, $\iota(v)\in \End(\gamma(v))$,
and $\gamma(\omega(v_i,v_j)) \in {\sf Hom}(\gamma(v_i),\gamma(v_j)$
for all $v,v_i,v_j$
 (and such that $\phi_T$ is satisfied).
Such $\gamma$  provides a unique  evaluation $\gamma(t)$
($\gamma(T)$); in particular, $\gamma(x)=\gamma(x_1)\circ \gamma(x_2)$ 
if  $x=x_1 \circ x_2$ occurs in $\phi_T$. 

\emph{Satisfiability} and \emph{refutability} of (unnested) equations
are defined w.r.t. admissible assignments in analogy to
Subsection~\ref{u2}. 
Observe that within $\mc{H}$ ($\mc{H}^+$) any 
equation is equivalent to one where one side is a zero.
This yields the decision problems {\rm SAT}$_{\mc{C}}$,
{\rm REF}$_{\mc{C}}$
and their unnested variants
{\rm uSAT}$_{\mc{C}}$ and 
{\rm uREF}$_{\mc{C}}$
 for subcategories $\mc{C}$ of $\mc{H}$ and $\mc{H}^+$.
Observe the p-time reductions of 
{\rm SAT}$_{\mc{C}}$ to {\rm uSAT}$_{\mc{C}}$
and {\rm REF}$_{\mc{C}}$ to {\rm uREF}$_{\mc{C}}$.
\begin{fact}\lab{obj}
With any equation $\eta$ in the language 
of $*$-rings  one associates in p-time
an equation $\eta'$ in the language of $\mc{H}$ 
such that, for any subcategory $\mc{C}$ of 
 $\mc{H}$  one has $\eta'$
satisfiable respectively refutable in $\mc{C}$
if and only if $\eta$ is so in the  class of
$\End(H)$, $H$ ranging over objects of $\mc{C}$;
similarly, with the language of $\mc{R}$, $\mc{H}^+$, and $\End^+(H)$.
Also, the analogues hold for unnested equations.
\end{fact}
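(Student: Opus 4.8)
The plan is to translate a $\ast$-ring equation into a category equation by fixing a single object variable $v$ and forcing every morphism variable to name an endomorphism of the object assigned to $v$. Under this device the category operations restrict, on $\End(\gamma(v))$, to exactly the $\ast$-ring operations (and in the $\mc{H}^+$ case to the $\ast$-ring-with-pseudo-inversion operations), so the richer structure of the category collapses onto a single endomorphism ring and satisfiability and refutability transfer in both directions.

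First I would set up the translation $\tau$ in the style of Subsection~\ref{trans}. Fix one object variable $v$. For each ring variable $x_i$ let $\tau(x_i)$ be a morphism variable together with the conjuncts $\delta(x_i)=v$ and $\rho(x_i)=v$; put $\tau(0)=\omega(v,v)$, $\tau(1)=\iota(v)$, send multiplication to composition $\circ$, addition and subtraction to the additive structure of the category, the involution $(-)^*$ to adjunction, and (in the $\mc{H}^+$/$\mc{R}$ case) pseudo-inversion to pseudo-inversion. Extending over the term structure, an equation $\eta\colon s=t$ becomes $\eta'\colon \tau(s)=\tau(t)$. Since every variable carries domain and codomain $v$, each subterm of $\tau(s),\tau(t)$ again has domain and codomain $v$, so all composability side-conditions $\delta(t_2)=\rho(t_1)$ are automatically satisfied; in the unnested case these are precisely the conjuncts that Subsection~\ref{u2} places into $\phi_T$. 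Thus $\tau$ is a linear-size, hence p-time, map on unnested terms, and via Fact~\ref{ured} on terms as well.

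Next I would check the correspondence of admissible assignments. Given a satisfying (resp. refuting) assignment $\bar f$ for $\eta$ in $\End(H)$ with $H$ an object of $\mc{C}$, set $\gamma(v)=H$ and $\gamma(x_i)=f_i$; this $\gamma$ is admissible, as each $f_i\in\End(H)={\sf Hom}(H,H)$, and it evaluates $\tau(s),\tau(t)$ to the very values of $s,t$ under $\bar f$, because on $\End(H)$ composition is ring multiplication, $\iota(H)$ and $\omega(H,H)$ are the unit and zero, adjunction is the $\ast$-involution, and pseudo-inversion agrees. Hence $\eta'$ is satisfied (resp. refuted) in $\mc{C}$. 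Conversely, any admissible $\gamma$ for $\eta'$ forces, through the conjuncts $\delta(x_i)=v=\rho(x_i)$, the object $H:=\gamma(v)$ to be an object of $\mc{C}$ and every $\gamma(x_i)$ to lie in $\End(H)$ (which, $\mc{C}$ being closed under the operations, is a member of the relevant class); reading $\gamma$ as an assignment into $\End(H)$ and using the same identification of operations yields a satisfying (resp. refuting) assignment for $\eta$.

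For the satisfiability variant the non-triviality clause matches on both sides: in presence of the constants $0\neq 1$ (Fact~\ref{bsat}) non-triviality of the subalgebra generated in $\End(H)$ amounts to $H\neq 0$, which is exactly $\iota(v)\neq\omega(v,v)$ under $\gamma$, so the generation condition transfers verbatim (for refutability no such clause is present). The identical translation and argument cover $\mc{R}$, $\mc{H}^+$, and $\End^+(H)$, with pseudo-inversion added to the list of matched operations, as well as the unnested variants. I expect the operation-matching itself to be immediate from the definition of the enrichment; the only genuinely delicate point is the bookkeeping of the domain/codomain conjuncts in the two-sorted, partial, unnested setting—verifying that forcing one common object $v$ is both necessary (so the backward direction lands inside a single endomorphism ring) and harmless for admissibility and the p-time bound.
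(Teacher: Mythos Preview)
Your proposal is correct and follows exactly the paper's approach: the paper's proof is the single sentence ``Choose a single object variable $v$ and put $\delta(t)=\rho(t)=v$ for all subterms $t$ occurring in $\eta$,'' and you have simply spelled out the details of why this works. The operation-matching, the correspondence of admissible assignments, and the handling of the unnested variants are all as expected, so nothing further is needed.
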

\begin{proof}
Choose a single object variable $v$
 and put $\delta(t)=\rho(t)=v$ for all subterms  $t$ occurring in $\eta$.
\end{proof}

\begin{thm}\lab{hilbcat}
\begin{enumerate}
\item Let $\mc{C}$ be a
subcategory of 
$\mc{H}^+$ respectively $\mc{H}$  with finitely many objects
 including some  non-zero $H$.
In case of $\mc{H}^+$, the decision problems 
 {\rm SAT}$_{\mc{C}}$,  {\rm uSAT}$_{\mc{C}}$,
{\rm REF}$_{\mc{C}}$, and {\rm uREF}$_{\mc{C}}$   are each p-time equivalent to 
$\rm{FEAS}_{\mathbb{Z},\mathbb{R}}$; in case of $\mc{H}$ so are
 {\rm SAT}$_{\mc{C}}$ and {\rm uSAT}$_{\mc{C}}$. In particular,
decidability holds in all cases. 
\item 
 {\rm SAT}$_{\mc{C}}$ and {\rm uSAT}$_{\mc{C}}$ are  undecidable
for $\mc{C}=\mc{H},\,\mc{H}^+$.
\item {\rm REF}$_{\mc{H}^+}$ and {\rm uREF}$_{\mc{H}^+}$ 
are both  decidable and each   p-time equivalent to 
$\rm{FEAS}_{\mathbb{Z},\mathbb{R}}$.
\end{enumerate}
\end{thm}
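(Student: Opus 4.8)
The plan is to reduce, in every case, the category decision problems for $\mc{C}$ to the corresponding problems for the associated rings, and then to quote the results already established for the latter. One direction is Fact~\ref{obj}: collapsing all domains and codomains to a single object variable turns a ring(-with-pseudo-inversion) equation $\eta$ into a category equation $\eta'$ with the same satisfiability/refutability over the rings $\End(H)$ (resp. $\End^+(H)$), $H$ an object of $\mc{C}$, and this already holds for unnested equations. For the opposite direction I would fix the Hilbert space $K=\bigoplus_{H}H$, the direct sum over the objects of $\mc{C}$, with its summand projections $e_H$; a morphism variable $x$ with declared domain $v_a$ and codomain $v_b$ is then represented by a ring element $\hat x=e_{v_b}\hat x e_{v_a}$ of $\End^{(+)}(K)$, composition becoming multiplication, $\omega$ the zero, $\iota$ a summand projection, and $\ast$, ${}^+$ the ring adjoint and pseudo-inverse (both of which respect the block structure). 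Each object variable becomes a projection constrained to lie in the finite family $\{e_H\}$; since $\mc{C}$ has only finitely many objects this is a \emph{bounded} disjunction, which folds into a single feasibility instance by the standard multiplier device and so stays polynomial.

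For part (i), $\mathrm{SAT}_{\mc{C}}\equiv\mathrm{uSAT}_{\mc{C}}$ and $\mathrm{REF}_{\mc{C}}\equiv\mathrm{uREF}_{\mc{C}}$ by the reductions of Fact~\ref{ured} and the triviality of the reverse passages. Using the representation above, $\mathrm{SAT}_{\mc{C}}$ and $\mathrm{REF}_{\mc{C}}$ reduce to $\fs$ by encoding $K=\mathbb{R}^d$ (or $\mathbb{C}^d$ through real and imaginary parts) by matrices of real variables: ring operations become polynomial, each adjoint a (conjugate) transpose, and each pseudo-inverse a fresh matrix governed by its four defining equations -- fresh variables, rather than substituted terms, are what keep this polynomial. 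A satisfiability instance then asks for a common zero, while a refutation $t\neq 0$ is converted into equations by $\exists v\,\exists y\,(y\langle tv,tv\rangle-1=0)$. Conversely $\fs$ reduces to $\mathrm{SAT}_{\mc{C}}$ by adjoining to $p_k(\bar x)=0$ the equations $x_i=x_i^{\ast}$ and $x_ix_j=x_jx_i$: by the Spectral Theorem such a system is satisfiable in a non-zero $\End^{(+)}(H)$ precisely when the $p_k$ have a common real zero (sufficiency via $f_i=\rho_i\,\id$), and as this uses no pseudo-inversion it serves both the $\mc{H}$ and the $\mc{H}^+$ cases; in the $\mc{H}^+$ case $\fs$ reduces to $\mathrm{REF}_{\mc{C}}$ by Lemma~\ref{lu1} followed by Fact~\ref{obj}. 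Decidability in all these cases follows from Tarski's procedure for the reals.

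For part (ii) I would derive undecidability of $\mathrm{SAT}_{\mc{H}^+}$ from that of $\mathrm{SAT}_{\mr}$ (\cite[Theorem 22]{SAT}) via the ring-to-category direction of Fact~\ref{obj}, and then pass to $\mathrm{SAT}_{\mc{H}}$ by replacing every pseudo-inverse $s^{+}$ with a fresh variable together with its four defining equations, added as conjuncts; since the pseudo-inverse is the unique such element in each $\End(H)$ this preserves satisfiability while living in the language of $\mc{H}$, and $\mathrm{uSAT}$ follows from $\mathrm{SAT}\equiv\mathrm{uSAT}$. For part (iii), the block representation over $K=\bigoplus_{H}H$ now uses arbitrary objects and hence identifies $\mathrm{REF}_{\mc{H}^+}$ and $\mathrm{uREF}_{\mc{H}^+}$ (in p-time, since adjunction and pseudo-inversion respect blocks and so translate without duplication) with $\mathrm{REF}_{\mr}$ and $\mathrm{uREF}_{\mr}$; these are p-time equivalent to $\fs$ by Theorem~\ref{eqring}, and the converse $\fs\to\mathrm{REF}_{\mc{H}^+}$ is once more Lemma~\ref{lu1} with Fact~\ref{obj}.

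The step I expect to be the main obstacle is keeping every reduction polynomial. As flagged in Subsection~\ref{out2}, pseudo-inversion is captured on the coordinatized side only by terms with several occurrences of the principal variable, so naive substitution blows up exponentially; I sidestep this in the fixed-dimension encodings of part (i) by introducing fresh matrix variables constrained by the defining equations rather than substituting a term, and in part (iii) by inheriting the bound from Theorem~\ref{eqring}, whose proof deliberately routes through $\mathrm{uREF}_{\mc{L}}$ for exactly this reason. A secondary point to verify is the honesty of the block representation -- that the domain/codomain bookkeeping is respected uniformly and that the bounded disjunction over the objects of $\mc{C}$ really does collapse to one polynomial system -- but this is routine given the finiteness of $\mathrm{obj}(\mc{C})$.
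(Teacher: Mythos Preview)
Your overall strategy matches the paper's --- reduce the category problems to ring problems and quote the earlier results --- but several of your intermediate reductions differ from the paper's choices, and the differences are worth noting.

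For part~(i), the paper reduces ${\rm REF}_{\mc{C}}$ to ${\rm SAT}_{\mc{C}}$ entirely inside the category: a morphism $f\in{\sf Hom}(H',H)$ is nonzero iff there exist $g_1,\ldots,g_d\in\End(H)$ with $\id_H=\sum_i g_i\circ f$ (here $d=\max\dim$), which turns an inequality into a conjunction of equalities without leaving the categorical language. You instead jump straight to matrices and encode $t\neq 0$ via $\exists v\,\exists y\,(y\langle tv,tv\rangle-1=0)$. Both work; the paper's trick is cleaner because it keeps the intermediate problem in $\mc{C}$ and avoids committing to coordinates early. For ${\rm FEAS}_{\mathbb{Z},\mathbb{R}}\to{\rm SAT}_{\mc{C}}$ the paper also takes a different route: it picks an object $H$ of maximal dimension $d$, adjoins a system of $d\times d$ $*$-matrix units $e_{ij}$ with $\iota(v)=\sum_i e_{ii}$, and interprets $\mathbb{R}$ as $\{r:r=r^*=e_{11}re_{11}\}$ (cf.\ \cite[Theorem~4.4]{jacm}); your commuting-self-adjoints encoding is an equally valid alternative and arguably simpler.

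For part~(ii), the paper simply invokes Fact~\ref{obj} and \cite[Theorem~22]{SAT} for both $\mc{H}$ and $\mc{H}^+$ simultaneously (Fact~\ref{obj} already has a clause for $*$-rings without pseudo-inversion). Your extra step --- deriving the $\mc{H}$ case from the $\mc{H}^+$ case by replacing each $s^+$ with a fresh variable constrained by the four Moore--Penrose equations --- is correct and self-contained, but unnecessary given what Fact~\ref{obj} already provides.

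For part~(iii), your block representation over $K=\bigoplus H$ is exactly the paper's construction $H_{\mc{C}}=\bigoplus^\perp_i H_i$, but the paper is more explicit about the \emph{syntactic} side of the translation $\tau$: object variables $v$ become ring variables with projection $vv^+$, morphism variables $z$ become $\rho(z)\rho(z)^+\,z\,\delta(z)\delta(z)^+$, and $\iota(v)\mapsto vv^+$, $\omega(v_1,v_2)\mapsto 0$. This is where you are vaguest; your justification ``adjunction and pseudo-inversion respect blocks'' explains why the \emph{semantics} match but not how object variables are handled symbolically. The paper's formulation makes the p-time bound on unnested terms transparent. Your last paragraph correctly identifies the pseudo-inversion blowup as the delicate point and resolves it the same way the paper does.
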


\begin{proof}
Lemma~\ref{lu1} and Fact~\ref{obj} provide,
for any subcategory $\mc{C}$ of $\mc{H}^+$, 
a p-time reduction of {\fs} to 
{\rm REF}$_{\mc{C}}$.

In (i),  given  $\mc{C}$ with finitely many objects,
let $d$ be the maximal dimension of an object in $\mc{C}$.
Introducing coordinates one obtains a p-time reduction
of {\rm uSAT}$_{\mc{C}}$ to $\rm{FEAS}_{\mathbb{Z},\mathbb{R}}$.
A  p-time reduction of {\rm REF}$_{\mc{C}}$
to {\rm SAT}$_{\mc{C}}$
(and, similarly, for the unnested variants) can be based on the observation that
for $f \in {\sf Hom}(H',H)$ one has $f \neq \omega(H',H)$
if and only if there are $g_i \in \End(H)$
such that $\id_H= \sum_{i=1}^d g_i\circ f$ where $\dim H\leq d$.

Now, let $H \in \mc{C}$ with $d=\dim H$.
${\rm FEAS}_{\mathbb{Z},\mathbb{R}}$ reduces to
{\rm SAT}$_{\mc{C}}$ requiring, in a 
conjunction of equations with unique object variable $v$,
 a system of $d\times d$ $*$-matrix
units $e_{ij}$ such that $\iota(v)$
evaluates to $\sum_{ii} e_{ii}$ and interpreting $\mathbb{R}$
as the set of $r \in \End(H)$ such that $r=r^*=e_{11}re_{11}$
cf.  \cite[Theorem 4.4]{jacm}.

Undecidability in (ii) follows from
Fact~\ref{obj} and \cite[Theorem 22]{SAT}.

In order to prove in (iii) the  reduction to {\fs}, we relate 
subcategories $\mc{C}$ with  $n$ objects  to
$(\End^+(H);\bar \pi)$ where $H$ is a finite
dimensional Hilbert space and $\bar \pi$ an $n$-tuple of
orthogonal projections in $\End(H)$.
Given $H$ and $\bar \pi$,
let $\mc{C}_{H\bar \pi}$ have objects  $H_i=\im \pi_i$; 
then ${\sf Hom}(H_i,H_j)$ consists of the $\pi'_j\circ f \circ\vep_i$,
$f \in \End(H)$, where $\pi'_j\in {\sf Hom}(H,H_j)$ is the orthogonal projection
onto $H_j$ and
 $\vep_i$  the identical
embedding of $H_i$ into $H$.
Given $\mc{C}$ with objects $H_i$
let $H_{\mc{C}}$
be given by $H=\bigoplus^\perp_i H_i$ and $\pi_i$ the orthogonal projection
onto the summand corresponding to  $H_i$. Observe that
$\mc{C}$ is isomorphic to $\mc{C}_{H_{\mc{C}}}$.

To translate an unnested  equation $\eta$ in the
language of $\mc{H}^+$  into an unnested equation $\tau(\eta)$
in the language of $\mc{R}$, consider both morphism
and object 
variables as  variables for $\mc{R}$.
Delete the side conditions on domain and range
and read $\circ, +,-,^*,^+$ as operation symbols
for $\mc{R}$. Replace any $\omega(v_1,v_2)$ by $0$ and
$\iota(v)$ by $vv^+$.
 Replace any morphism variable $z$ in 
the resulting formula by
the $\mc{R}$-term $\hat{z}$ given as
\[ \rho(z)\circ\rho(z)^+ \circ z \circ\delta(z)\circ\delta(z)^+.\]
Now, assume that $\eta$ 
fails in $\mc{H}^+$; then it
does so in some subcategory $\mc{C}$ with  finitely many objects
and $\tau(\eta)$ fails in $H_{\mc{C}}$: Given
a failing assignment obtain one in $H_{\mc{C}}$, namely associate with 
 $z \mapsto f\in{\sf Hom}(H_i,H_j)$, $v \mapsto H_k$ in $\mc{C}$
the assignment
 $z \mapsto \vep_j\circ f \circ \pi'_i$,
 $v \mapsto \pi_k$.  Conversely, assume a failing assignment for
$\tau(\eta)$  in $\End^+(H)$ with values $\pi_i$ for the
terms $v_iv_i^+$, $v_i$ an object variable occurring in $\eta$.
Form $\mc{C}=\mc{C}_{H\bar\pi}$ to obtain
a failing assignment for $\eta$
where $v_k \mapsto H_k=\im \pi_k$ and $z \mapsto 
\pi_j \circ f \circ \vep_i$ if
$\hat{z}$ is evaluated to 
$\pi'_j \circ f \circ \pi_i$.
This provides a p-time reduction of {\rm uREF}$_{\mc{H}^+}$
via  {\rm uREF}$_{\mc{R}}$ to {\fs} according to Subsection~\ref{uref}.
\end{proof}
Omitting pseudo-inversion, any term is equivalent to one 
where $^*$ occurs only in the form $x_i^*$.
Thus,  a polynomial  dimension bound for refutation
can be established, directly.
Namely, for any such term $t$, $\bar f$ in $\End(H)$, and $a \in H$
one has $t(\bar f)(a)= t(\bar g)(a)$ where $g_i=\pi_U \circ f_i\circ 
\vep_U$ and $U$ is the subspace spanned by the  
$s(\bar f)(a)$, $s$ a subterm of $t$.
 This gives the upper complexity bound.

Similarly, consider
 the category $\mc{H}$ and  such term $t(\bar x)$,
admissible substitution $\bar f$ of morphisms, and
$a$ such that $t(\bar f)(a)$ is defined. By recursion,  one defines
a subspace $H_{t\bar f a}$  of $H$ for each $H\in \mc{H}$
to obtain the objects of  a  category $\mc{H}_{t\bar f a}$
in which $t(\bar f)(a)$ evaluates the same  as in $\mc{H}$;
here, one has a polynomial bound on the sum of the dimensions
of objects.  Again, this gives the complexity of {\fs}  as
an upper bound for deciding identities.

\end{document}